\numberwithin{equation}{section}
\numberwithin{figure}{section}
\newtheorem{theorem}{Theorem}[section]
\newtheorem{lemma}[theorem]{Lemma}
\newtheorem{proposition}[theorem]{Proposition}
\newtheorem{corollary}[theorem]{Corollary}
\newtheorem{fact}[theorem]{Fact}
\newtheorem{remark}[theorem]{Remark}
\newtheorem{example}[theorem]{Example}
\newtheorem{question}[theorem]{Question}
\theoremstyle{definition}
\newtheorem{definition}[theorem]{Definition}
\newcommand{\C}{{\mathbb{C}}}
\newcommand{\Z}{{\mathbb{Z}}}
\newcommand{\R}{{\mathbb{R}}}
\newcommand{\N}{{\mathbb{N}}}
\renewcommand{\t}{\mathfrak{t}}
\newcommand{\into}{\hookrightarrow}
\newcommand{\myvcenter}[1]{\ensuremath{\vcenter{\hbox{#1}}}}
\DeclareMathOperator{\Lie}{Lie}
\DeclareMathOperator{\pt}{pt}
\newcommand{\hsm}{{\hspace{1mm}}}
\definecolor{gold}{rgb}{0.85,.66,0}
\definecolor{cherry}{rgb}{0.9,.1,.2}
\definecolor{burgundy}{rgb}{0.8,.2,.2}
\definecolor{orangered}{rgb}{0.85,.3,0}
\definecolor{orange}{rgb}{0.85,.4,0}
\definecolor{olive}{rgb}{.45,.4,0}
\definecolor{lime}{rgb}{.6,.9,0}
\definecolor{green}{rgb}{.2,.7,0}
\definecolor{grey}{rgb}{.4,.4,.2}
\definecolor{brown}{rgb}{.4,.3,.1}
\newcommand{\PFill}{{\mathcal P}Fi \ell \ell}
\newcommand{\Fill}{{\mathcal F}i \ell \ell}
\newcommand{\roll}{{ro\ell \ell}}
\newcommand{\Flags}{{\mathcal F}\ell ags}
\DeclareMathOperator{\Hess}{Hess}
\begin{document}

\title{Poset pinball, highest forms, and $(n-2,2)$ Springer varieties}

\author{Barry Dewitt}
\address{Mathematical Institute\\
24--29 St Giles'\\
University of Oxford\\
Oxford, OX1 3LB \\
United Kingdom}
\email{barry.dewitt@maths.ox.ac.uk}

\author{Megumi Harada}
\address{Department of Mathematics and
Statistics\\ McMaster University\\ 1280 Main Street West\\ Hamilton, Ontario L8S4K1\\ Canada}
\email{Megumi.Harada@math.mcmaster.ca}
\thanks{The second author is partially supported by an NSERC Discovery Grant,
an NSERC University Faculty Award, and an Ontario Ministry of Research
and Innovation Early Researcher Award.}

\keywords{} 
\subjclass[2000]{Primary: 14M17; Secondary: 55N91 }

\date{\today}



\begin{abstract}

In this manuscript we study type $A$ nilpotent Hessenberg varieties
equipped with a natural $S^1$-action using techniques introduced by
Tymoczko, Harada-Tymoczko, and Bayegan-Harada, with a particular
emphasis on a special class of nilpotent Springer varieties
corresponding to the partition $\lambda= (n-2,2)$ for $n \geq
4$. First we define the \textbf{adjacent-pair matrix} corresponding to
any filling 
of a Young diagram with $n$ boxes with the alphabet
$\{1,2,\ldots,n\}$. Using the adjacent-pair matrix we make more explicit and
also extend some statements concerning \textbf{highest forms of linear operators} in previous work of
Tymoczko. Second, for a nilpotent operator $N$ and Hessenberg function
$h$, we construct an explicit bijection
between the $S^1$-fixed points of the 
nilpotent Hessenberg variety $\Hess(N,h)$ 
and the 
set of $(h,\lambda_N)$-permissible fillings of
the Young diagram $\lambda_N$. 
Third, we use \textbf{poset pinball}, the combinatorial game introduced by Harada
  and Tymoczko, to study the $S^1$-equivariant cohomology of type $A$
  Springer varieties $\mathcal{S}_{(n-2,2)}$ associated to Young
  diagrams of shape $(n-2,2)$ for $n\geq 4$. Specifically, we use the \textbf{dimension pair algorithm} for Betti-acceptable
  pinball described by Bayegan and Harada to specify a subset of the 
  equivariant Schubert classes in the $T$-equivariant cohomology of the
  flag variety $\mathcal{F}\ell ags(\C^n) \cong GL(n,\C)/B$
  which maps to a module basis of
  $H^*_{S^1}(\mathcal{S}_{(n-2,2)})$ under the projection map
  $H^*_T(\mathcal{F}\ell ags(\C^n)) \to
  H^*_{S^1}(\mathcal{S}_{(n-2,2)})$. 
 Our poset pinball module basis is not
  poset-upper-triangular; this is the first concrete such example in
  the literature. A straightforward consequence of our proof is that there exists a simple and explicit change of
  basis which transforms our poset pinball basis to a poset-upper-triangular module basis for
  $H^*_{S^1}(\mathcal{S}_{(n-2,2)})$. We close with open questions for
  future work. 
\end{abstract}

\maketitle

\setcounter{tocdepth}{1}
\tableofcontents

\section{Introduction}\label{sec:intro}

The study of Hessenberg varieties is an active field of
modern mathematical research.  Indeed, Hessenberg varieties arise in
many areas of mathematics, including geometric representation theory
\cite{Spa76, Shi85, Fun03}, numerical analysis \cite{DeMProSha92},
mathematical physics \cite{Kos96, Rie03}, combinatorics \cite{Ful99},
and algebraic geometry \cite{BriCar04, CarrellKaveh:2008}, so it is of interest
to explicitly analyze their topology, e.g. the structure of their
(equivariant) cohomology rings. In this paper we further develop
the approach, initiated and developed in \cite{HarTym09, HarTym10,
  BayHar10a, BayHar10b}, which studies the topology of Hessenberg varieties
through poset pinball and Schubert calculus techniques.

In this manuscript we focus on the case of nilpotent Hessenberg varieties, and more
particularly on nilpotent Springer varieties. 
We begin by briefly recalling the setting of our results; for more details we
refer the reader to Section~\ref{sec:springer and circle action}.
Let $N: \C^n \to \C^n$
be a nilpotent operator. Let
$h: \{1,2,\ldots, n\} \to \{1,2,\ldots,
n\}$ be a function satisfying $h(i) \geq i$ for all $1 \leq i \leq n$ and $h(i+1)
\geq h(i)$ for all $1 \leq i < n$. 
In type $A$, \textbf{nilpotent Hessenberg varieties}
can be defined as the following subvariety of
$\mathcal{F}\ell ags(\C^n)$: 
\[
\Hess(N,h) := \{ V_\bullet = (0 \subseteq V_1 \subseteq V_2 \subseteq \cdots
\subseteq V_{n-1} \subseteq V_n = \C^n) \hsm \mid \hsm  NV_i \subseteq
V_{h(i)} \textup{ for all } i = 1, \ldots, n \}.
\]
We equip $\Hess(N,h)$ with a natural $S^1$-action (described precisely in
Section~\ref{sec:springer and circle action}) induced from the
diagonal torus subgroup $T$ of $U(n,\C)$ acting in the usual fashion
on 
$GL(n,\C)/B \cong \Flags(\C^n)$. In the special case
when the Hessenberg function $h: \{1,2,\ldots,n\} \to
\{1,2,\ldots,n\}$ is the identity $h(i)=i$ for all $1 \leq i \leq n$,
we call $\Hess(N,h)$ a \textbf{nilpotent Springer variety} and denote
it by $\mathcal{S}_N$.

Our first two results apply to general type $A$ nilpotent Hessenberg
varieties. Let $N$ be a nilpotent $n \times n$ matrix in Jordan
canonical form with weakly decreasing block sizes and let $\lambda$
denote the Young diagram\footnote{We use English notation for Young
  diagrams.} (equivalently the partition) with row lengths the Jordan
block sizes of $N$ listed in weakly decreasing order. In \cite[Theorem
6.1]{Tym06}
Tymoczko builds a paving-by-affines of a nilpotent Hessenberg variety
$\Hess(N,h)$, where the nilpotent operator $N$ is required to be in  
\textbf{highest form}
(see \cite[Definition 4.1]{Tym06}). Much topological information about
a variety is encoded in a paving-by-affines, so it is useful to 
build tools for dealing with the technical condition 
that the operator $N$ be in highest form. 
Our first contribution is to
introduce what we call the \textbf{adjacency-pair matrix}, which is 
an $n \times n$ matrix constructed from a filling of a Young diagram
$\lambda$ with $n$ boxes by the alphabet $\{1,2,\ldots,n\}$. This then
allows us to 
make more explicit and also generalizes a procedure for producing
highest forms of linear operators sketched in
\cite[Section 4]{Tym06}. In particular our methods allows us to
straightforwardly derive the explicit change-of-basis permutation
matrix which puts $N$ into any choice of highest form (including that
used by Tymoczko in \cite{Tym06},
cf. Corollary~\ref{corollary:tymoczko}).
The adjacent-pair matrices
also allows us to see precisely the set of permutation matrices which
conjugate $N$ to highest form (Theorem~\ref{theorem:N_T and
  algorithm}). 
The explicit nature of our results allows for other computations related to these
nilpotent Hessenberg varieties. As an example, we derive in
Lemma~\ref{lemma:conjugated circle} an explicit formula for the Lie
algebra projection induced by the inclusion of the $S^1$ subgroup
acting on a special case of nilpotent Springer variety into the diagonal subgroup
$T$ of $U(n,\C)$ acting on $\Flags(\C^n)$.  Thus we expect our procedure to be useful for
future poset pinball analysis of type $A$ nilpotent Hessenberg
varieties.

The affine cells in Tymoczko's paving-by-affines of $\Hess(N,h)$ are
in 
one-to-one correspondence with 
\textbf{permissible fillings} of Young diagrams (defined precisely in
Section~\ref{sec:S1 action}); this is a useful combinatorial
enumeration of the affine cells. The correspondence arises since the
affine cells 
are intersections of $\Hess(N,h)$ with certain Schubert
cells $BwB \subseteq GL(n,\C)/B \cong \Flags(\C^n)$. Each such
Schubert cell contains a unique (coset of a) permutation matrix $wB$,
and each permutation $w$ can be associated to a permissible filling of
$\lambda$.  Our second contribution is to extend this relationship between
the permutations (which in this manuscript we think of as $S^1$-fixed
points of $\Hess(N,h)$) and the permissible fillings as follows. For the purpose
of the discussion below assume that $N$ is in Jordan canonical form with
weakly decreasing block sizes. We define for each permutation $\sigma
\in S_n$ a bijection $\phi_{\lambda,\sigma}$ between the set
$\Fill(\lambda)$ of fillings of $\lambda$ with the set of permutations
$S_n$ (Definition~\ref{definition:phi lambda sigma}). Each
$\phi_{\lambda,\sigma}$ then induces a bijection between the
permissible fillings $\PFill(\lambda)$ of $\lambda$ and the
$S^1$-fixed points of the \emph{translated} Hessenberg variety
$\Hess(\sigma N \sigma^{-1}, h)$. In particular this yields an explicit
formula for this bijection for all the possible highest forms of $N$
in Theorem~\ref{theorem:N_T and algorithm}. 
Our results also provide proofs of statements quoted in
\cite{BayHar10b}.

Our third contribution is an explicit construction of a
computationally convenient module basis for the $S^1$-equivariant
cohomology\footnote{We work with cohomology with
  coefficients in $\C$ throughout, and hence omit it from our
  notation.}   of a special class of 
type $A$ nilpotent Springer varieties, 
namely, the $2$-block (also known as $2$-row) nilpotent
Springer varieties associated to Young diagrams of the form
$(n-2,2)$, e.g.
\[
{\def\lr#1{\multicolumn{1}{|@{\hspace{.6ex}}c@{\hspace{.6ex}}|}{\raisebox{-.3ex}{$#1$}}}
\raisebox{-.6ex}{$\begin{array}[b]{ccccccc}
\cline{1-1}\cline{2-2}\cline{3-3}\cline{4-4}\cline{5-5}\cline{6-6}\cline{7-7}
\lr{\phantom{1}}&\lr{\phantom{1}}&\lr{\phantom{1}}&\lr{\phantom{1}}&\lr{\phantom{1}}&\lr{\phantom{1}}&\lr{\phantom{1}}
\\
\cline{1-1}\cline{2-2}\cline{3-3}\cline{4-4}\cline{5-5}\cline{6-6}\cline{7-7}
\lr{\phantom{1}}&\lr{\phantom{1}}\\
\cline{1-1}\cline{2-2}
\end{array}$}.
}
\]
Here and below we will always assume $n\geq 4$, so the smallest
Springer variety we consider corresponds to the $2 \times 2$ block
\[
{\def\lr#1{\multicolumn{1}{|@{\hspace{.6ex}}c@{\hspace{.6ex}}|}{\raisebox{-.3ex}{$#1$}}}
\raisebox{-.6ex}{$\begin{array}[b]{cc}
\cline{1-1}\cline{2-2}
\lr{\phantom{1}}&\lr{\phantom{1}}\\
\cline{1-1}\cline{2-2}
\lr{\phantom{1}}&\lr{\phantom{1}}\\
\cline{1-1}\cline{2-2}
\end{array}$}
}
\]
More specifically, we use the \textbf{poset pinball} methods introduced in
\cite{HarTym10} and the \textbf{dimension pair algorithm} for determining
pinball rolldowns described in \cite{BayHar10b} to construct our
combinatorially natural module basis for 
$H^*_{S^1}(\mathcal{S}_{(n-2,2)})$. Our arguments
use our results above on highest forms and the explicit
correspondence between permissible fillings and $S^1$-fixed points of
the Springer variety. The module basis is obtained by taking images
under the natural projection map $H^*_T(\mathcal{F}\ell ags(\C^n)) \to
H^*_{S^1}(\mathcal{S}_{(n-2,2)})$, to be described in detail below, of
a subset of the $T$-equivariant Schubert classes in
$H^*_T(\mathcal{F}\ell ags(\C^n))$.  A similar analysis by Bayegan and
the second author in a special case of regular nilpotent Hessenberg
varieties \cite{BayHar10b} yields a \emph{poset-upper-triangular}
basis in the sense of \cite{HarTym10}. In contrast to the results in
\cite{BayHar10b}, in the present manuscript we find that the module
basis is \emph{not} poset-upper-triangular; this is the first such example
in the literature.
In addition, a straightforward
consequence of our proof is that a simple change of variables yields a
module basis which is not a poset pinball basis but is
poset-upper-triangular.  These results provides further evidence for
the point of view, explained in \cite{HarTym10}, that geometrically
natural GKM-type module bases in equivariant cohomology may not always
be poset-upper-triangular, but still computationally convenient.

We now outline the contents of the paper. In Section~\ref{sec:springer
  and circle action} we provide the necessary definitions and set some
notation.  In Section~\ref{sec:highest form} we define the
adjacent-pair matrix
and prove results concerning highest forms of linear
operators. As a simple application we derive the change-of-variable
matrix required to describe the circle subgroup of $T \subseteq
U(n,\C)$ acting on a translated Springer variety. Section~\ref{sec:S1
  action} contains our results on the bijection between permissible
fillings of a Young diagram and the $S^1$-fixed points of Hessenberg
varieties.  Section~\ref{sec:betti} is a mainly expository
section which recalls the terminology and definitions of poset pinball
and the dimension pair algorithm in \cite{HarTym10, BayHar10b}. In 
Sections~\ref{sec:examples} and~\ref{sec:pinball},
poset pinball for the case of $(n-2,2)$ Springer varieties is studied
in detail. The small-$n$ cases $n=4$ and $n=5$ are explicitly computed
and recorded in Section~\ref{sec:examples}.
The
main pinball result is in Section~\ref{sec:pinball}, where we prove that
the dimension pair algorithm 
yields 
a linearly independent set of classes in
$H^*_{S^1}(\mathcal{S}_{(n-2,2)})$ and hence a module basis. We close with some directions for future investigation in Section~\ref{sec:open questions}.

\bigskip
\noindent \textbf{Acknowledgements.} We thank Darius Bayegan, Erik
Insko, and Aba Mbirika for helpful conversations
and interest in this project. We are particularly grateful to Julianna 
Tymoczko for her ongoing support, as well as for finding (and suggesting ways to fix!) errors in an earlier draft of this paper and making many excellent suggestions for improving exposition. 

\section{Nilpotent Hessenberg 
  varieties and $S^1$-actions}\label{sec:springer and circle action}

We begin with the definition of the type $A$ nilpotent Hessenberg
varieties, of which the nilpotent Springer varieties are a special
case. We also recall the definition of 
a circle subgroup of the maximal torus $T$ of
$U(n,\C)$ which acts on any nilpotent Hessenberg 
variety. Since some of the discussion below applies to any nilpotent
Hessenberg variety, we present the general definition here.
We
work exclusively with type $A$ in this manuscript and hence 
omit it from our terminology below. 

Given a nilpotent operator $N: \C^n \to \C^n$, consider its Jordan
canonical form with weakly decreasing sizes of Jordan blocks.  Let
$\lambda_N$ denote the partition of $n$ with entries the sizes of the
Jordan blocks of $N$. Throughout this manuscript we identify
partitions of $n$ with the corresponding Young diagram.  For example,
if $N: \C^5 \to \C^5$ is the operator with corresponding matrix
\[
\begin{bmatrix} 0 & 1 & 0 & 0 & 0 \\ 0 & 0 & 1 & 0 & 0 \\ 0 & 0 &
  0 & 0 & 0 \\ 0 & 0 & 0 & 0 & 1 \\ 0 & 0 & 0 & 0 & 0 \end{bmatrix} 
\]
with respect to the standard basis of $\C^5$, then since the matrix has $2$
Jordan blocks of sizes $3$ and $2$ respectively, it has associated
Young diagram 
\[
{\def\lr#1{\multicolumn{1}{|@{\hspace{.6ex}}c@{\hspace{.6ex}}|}{\raisebox{-.3ex}{$#1$}}}
\raisebox{-.6ex}{$\begin{array}[b]{ccc}
\cline{1-1}\cline{2-2}\cline{3-3}
\lr{\phantom{1}}&\lr{\phantom{1}}&\lr{\phantom{1}}\\
\cline{1-1}\cline{2-2}\cline{3-3}
\lr{\phantom{1}}&\lr{\phantom{1}}\\
\cline{1-1}\cline{2-2}
\end{array}$}
}
\]
which in turn corresponds to the partition $\lambda_N = (3,2)$. 

A \textbf{Hessenberg function} is a function
$h: \{1,2,\ldots, n\} \to \{1,2,\ldots,
n\}$ satisfying $h(i) \geq i$ for all $1 \leq i \leq n$ and $h(i+1)
\geq h(i)$ for all $1 \leq i < n$. We frequently denote a Hessenberg
function by listing its values in sequence, $h = (h(1), h(2), \ldots,
h(n)=n)$. 

The \textbf{(nilpotent) Hessenberg variety} $\Hess(N,h)$ associated to $N$ and a Hessenberg
function $h$ is a subvariety of 
the flag variety 
$\mathcal{F}\ell ags(\C^n) \cong GL(n,\C)/B$.
Recall that
$\mathcal{F}\ell ags(\C^n)$ is the projective variety of nested
subspaces in $\C^n$, i.e. 
\[
\Flags(\C^n) = \{ V_{\bullet} = (V_i) : 0 \subseteq V_1 \subseteq V_2 \subseteq \cdots
\subseteq V_{n-1} \subseteq V_n = \mathbb{C}^n \textup{ such that } \dim_{\C}(V_i) = i\}.
\]  
Then $\Hess(N,h)$ 
is defined to be the following subvariety of 
$\mathcal{F}\ell ags(\C^n)$: 
\begin{equation}\label{eq:def-Hess}
\Hess(N,h) := \{ V_{\bullet}  \in \mathcal{F}\ell ags(\C^n) \;
\vert \;  N V_i \subseteq
V_{h(i)} \textup{ for all } 1 \leq i \leq n\}. 
\end{equation}
The (nilpotent) Springer varieties\footnote{In the
  literature they are also called \emph{Springer fibres} because they
  arise as fibres of the symplectic resolution $T^*\mathcal{F}\ell
  ags(\C^n) \to \mathcal{N}$ where $\mathcal{N}$ denotes the subspace
  of nilpotent matrices in $\mathfrak{gl}(n,\C)$, but we do not need
  or use this perspective here.} 
 are Hessenberg varieties for the
special case where the Hessenberg function is the identity function
$h(i)=i$ for all $1 \leq i \leq n$: 

\begin{definition}
  Let $N: \C^n \to \C^n$ be a nilpotent operator. The \textbf{Springer
    variety $\mathcal{S}_N$ associated to $N$} is defined as
\[
\mathcal{S}_N := \{ V_\bullet \in \Flags(\C^n) \; \vert \;  NV_i \subseteq V_i \textup{ for all }
1 \leq i \leq n\}.
\]
\end{definition}

For any \(g \in GL(n,\C),\) it is straightforward to see that the
Hessenberg variety $\Hess(gNg^{-1},h)$ for the conjugate $g N
g^{-1}$ of $N$ is homeomorphic (in fact, isomorphic as algebraic
varieties) to $\Hess(N,h)$, with explicit homeomorphism
given by translation by $g$, i.e.,
\begin{equation}\label{eq:SN to SgNg^{-1}}
\xymatrix @R=0.1in{
\Hess(N,h) \ar[r] & \Hess(g N g^{-1},h) \\
h \;  \ar @{|->}[r] & g h
}
\end{equation}
where \(h \in GL(n,\C)\) denotes a flag $[h] \in GL(n,\C)/B
\cong \Flags(\C^n)$.

There exists a circle action on any nilpotent Hessenberg variety. 
Recall first that the maximal torus $T$ of $U(n,\C)$,
identified with the diagonal subgroup of $U(n,\C)$, acts on
the flag variety $\Flags(\C^n)$. Consider the following circle
subgroup of $T$:
\begin{equation}\label{eq:def S^1 for Springer}
S^1 := \left\{ \left. \begin{bmatrix} t^n & 0 & \cdots & 0 \\ 0 & t^{n-1} &  &
      0 \\ 0 & 0 & \ddots & 0 \\ 0 & 0 &  & t \end{bmatrix}
  \; \right\rvert \;  t \in \C, \; \|t\| = 1 \right\}  \subseteq T
\subseteq U(n,\C).
\end{equation}
It is shown in \cite[Lemma 5.1]{HarTym10} that the
$S^1$ of~\eqref{eq:def S^1 for Springer} preserves the nilpotent
Hessenberg 
variety $\Hess(N,h) \subseteq \Flags(\C^n)$ when the nilpotent
operator $N$ has matrix in Jordan canonical form with respect
to the standard basis of $\C^n$. Moreover, the $S^1$-fixed points
$\Hess(N,h)^{S^1}$ are
isolated and are a subset of $\Flags(\C^n)^{T}$, the $T$-fixed
points of $\Flags(\C^n)$. Using the identification $\Flags(\C^n)^{T}
\cong S_n$ we henceforth think of $S^1$-fixed points of
$\Hess(N,h)$ as permutations in $S_n$.

\section{Adjacent-pair matrices and highest forms of nilpotent operators}\label{sec:highest form}

Suppose given a nilpotent matrix $N_0$ 
in standard Jordan
canonical form with weakly decreasing Jordan block sizes.  
We think of $N_0$ as a linear operator on $\C^n$ written with respect
to the standard basis of $\C^n$. As mentioned in
Section~\ref{sec:springer and circle action}, in addition to the
Hessenberg variety $\Hess(N_0,h)$ we may also consider the
translated Hessenberg varieties $\Hess(gN_0 g^{-1},h) = g \cdot
\Hess(N_0,h)$ for various $g \in GL(n,\C)$. For the purposes of
poset pinball (discussed in more detail in Section~\ref{sec:betti})  it turns out to be necessary to use conjugates $\sigma
N \sigma^{-1}$ where $\sigma$ is a permutation matrix and $\sigma N
\sigma^{-1}$ is in so-called \textbf{highest form} 
\cite[Definition 4.2]{Tym06}; this is because Tymoczko's construction
of a paving-by-affines of a Hessenberg variety $\Hess(N,h)$ \cite[Theorem
6.1]{Tym06} assumes that $N$ is in highest form. 
Motivated by this,
in this section we develop a theory which relates highest forms of $N_0$
with fillings of the corresponding Young diagram $\lambda =
\lambda_{N_0}$. First we introduce a
bijection $\phi_\lambda: \Fill(\lambda) \to S_n$ from
the set of \textbf{fillings} $\Fill(\lambda)$ of $\lambda$ and the
permutation group $S_n$. Secondly we associate to each filling $T$ of
$\lambda$ a matrix $N_T$ which we call the \textbf{adjacent-pair
  matrix} of $T$. The main results of this section are
Theorems~\ref{adjacency} and~\ref{theorem:N_T and
  algorithm}. Theorem~\ref{adjacency} 
observes that the adjacent-pair
matrix $N_T$ is precisely the conjugate $\sigma N_0 \sigma^{-1}$ where
$\sigma = \phi_\lambda(T)$ is the permutation corresponding to $T$
under the bijection $\phi_\lambda$. This gives a computationally easy
and explicit method for specifying the conjugates of $N_0$ by
permutation matrices. In Theorem~\ref{theorem:N_T and algorithm} we
then prove that $N_T = \sigma N_0 \sigma^{-1}$ is in highest form
precisely when $T$ arises from a certain simple algorithm which we
describe below. This yields a straightforward enumeration of all
permutation matrices $\sigma$ for which $\sigma N_0 \sigma^{-1}$ is in
highest form, and in particular in Corollary~\ref{corollary:number
  highest forms} we give a count of the number of conjugates $\sigma
N_0 \sigma^{-1}$ for $\sigma \in S_n$ which are in highest form. 

The discussion in this section has several motivations and consequences. Firstly, our
results (e.g. Corollary~\ref{corollary:tymoczko}) both make explicit and also generalize a procedure for
producing highest forms of linear operators which is sketched in
\cite[Section 4, text near Figure 4]{Tym06}. Secondly, our explicit
correspondence between certain fillings of $\lambda$ and highest forms
of $N_0$ allows us to easily determine the permutation $\sigma =
\phi_\lambda(T)$ (see e.g. Example~\ref{example:compute sigma}) and
thus make further explicit computations with $\sigma$. As a sample
such computation and for use in Section~\ref{sec:pinball}, at the end of this
section we give a concrete description in coordinates of the
conjugated circle $\sigma S^1 \sigma^{-1}$ which acts on
the Springer variety $\mathcal{S}_{\sigma N_0 \sigma^{-1}}$ for $N_0$
corresponding to $\lambda = (n-2,2)$, as well as a computation of the
associated Lie algebra projection $\Lie(T) \to \Lie(\sigma S^1
\sigma^{-1})$. Thus some of the results in this section are 
preliminary to the arguments in the sections below. Third, we believe
that the theory initiated here of highest forms in relation to
Springer varieties is of independent interest; we describe some open
questions motivated by this theory in 
Section~\ref{sec:open questions}.

We recall some definitions.

\begin{definition} \textbf{(\cite[Definition 4.1]{Tym06})}
Let $X$ be any $m \times n$ matrix . We call the entry $X_{ik}$ a 
\textbf{pivot} of $X$
if $X_{ik}$ is nonzero 
and if all entries below and to its left vanish, i.e.,
\(X_{ij} = 0\) if \(j < k\) and \(X_{jk} = 0\) if \(j > i.\) 
\end{definition}
Moreover, given $i$, define $r_i$ to be the row of $X_{r_i,i}$ if the
entry is a pivot, and $0$ otherwise.

\begin{example}
Let
\begin{equation*}
X = \left[
\begin{array}{cccc}
0&1&1&0 \\0&0&5&0 \\ 0&1&0&0 \\ 0&0&0&3
\end{array}
\right].
\end{equation*}
Then $r_1 = 0$, $r_2 = 3$, $r_3 = 2$, and $r_4 = 4$.
\end{example}

\begin{definition} \textbf{(\cite[Definition 4.2]{Tym06})}
An upper-triangular nilpotent $n \times n$ matrix is in
\textbf{highest 
form} if its pivots form a nondecreasing sequence, namely 
\(r_1 \leq r_2 \leq \cdots \leq r_n.\) 
\end{definition}

\begin{example}\label{example: yng(3,2,1)}
The nilpotent matrix
\[
N = \begin{bmatrix} 0 & 0 & 0 & 0 & 0 & 0 \\
0 & 0 & 0 & 1 & 0 & 0 \\
0 & 0 & 0 & 0 & 1 & 0 \\
0 & 0 & 0 & 0 & 0 & 0 \\
0 & 0 & 0 & 0 & 0 & 1 \\
0 & 0 & 0 & 0 & 0 & 0 
\end{bmatrix}
\]
is in highest form since $r_1 = r_2 = r_3 = 0$,
$r_4 = 2, r_5 = 3, r_6 = 5$. 
\end{example}

Recall that a
  \textbf{filling} of $\lambda$ by the alphabet
  $[n]:=\{1,2,\ldots, n\}$ is an injective placing of the
  integers $\{1,2,\ldots, n\}$ into the boxes of $\lambda$. Following tableaux notation we
  denote by $T$ a filling of $\lambda$ by $[n]$.  
We denote
  by $\mathcal{F}i\ell \ell(\lambda)$ the set of all fillings of
  $\lambda$ by $[n]$. For $\lambda$ a Young
  diagram with $n$ boxes, we have $\lvert \mathcal{F}i \ell \ell(\lambda)
  \rvert = n!$. In the theory below we use a particular bijective
  correspondence between $\Fill(\lambda)$ and $S_n$. We introduce the
  following terminology.

\begin{definition}\label{english}
Let $\lambda$ be a Young diagram. 
  Let $T$ be a filling of $\lambda$ with alphabet
  $[n]$ for some $n \in \N$.  By the \textbf{English reading} of
  $T$ we mean the reading of the entries of $T$ from
  left to right along rows, starting at the top row and proceeding in
  sequence to the bottom row. 
The word of $T$ obtained via the 
  English reading of $T$ is called the {\bf English word} of
  $T$. 
If $\lambda$ is a Young diagram with $n$ boxes then
  we define
\begin{equation}\label{eq:def phi_lambda} 
\phi_\lambda: \mathcal{F}i \ell \ell(\lambda) \to S_n
\end{equation}
where $\phi_\lambda(T)$ is the permutation whose one-line notation
is given by the English word of $T$. 
Finally, if $\lambda$ has $n$ boxes then the \textbf{English
  filling} of $\lambda$ is the filling $T$ such that
$\phi_\lambda(T)$ is the identity permutation in $S_n$. 
\end{definition}

For $\lambda$ a Young diagram with $n$ boxes, it is immediate from the
definition that $\phi_\lambda$ is a bijection from $\mathcal{F}i \ell
\ell(\lambda)$ to $S_n$.

\begin{example}
For 
\[
T = \myvcenter{\def\lr#1{\multicolumn{1}{|@{\hspace{.6ex}}c@{\hspace{.6ex}}|}{\raisebox{-.3ex}{$#1$}}}
\raisebox{-.6ex}{$\begin{array}[b]{cccc}
\cline{1-1}\cline{2-2}\cline{3-3}\cline{4-4}
\lr{1}&\lr{2}&\lr{3}&\lr{4}\\
\cline{1-1}\cline{2-2}\cline{3-3}\cline{4-4}
\lr{5}&\lr{6}\\
\cline{1-1}\cline{2-2}
\lr{7}\\
\cline{1-1}
\end{array}$}
}
\quad \textup{  and  } \quad
T' =
\myvcenter
{\def\lr#1{\multicolumn{1}{|@{\hspace{.6ex}}c@{\hspace{.6ex}}|}{\raisebox{-.3ex}{$#1$}}}
\raisebox{-.6ex}{$\begin{array}[b]{cccc}
\cline{1-1}\cline{2-2}\cline{3-3}\cline{4-4}
\lr{3}&\lr{5}&\lr{6}&\lr{7}\\
\cline{1-1}\cline{2-2}\cline{3-3}\cline{4-4}
\lr{2}&\lr{4}\\
\cline{1-1}\cline{2-2}
\lr{1}\\
\cline{1-1}
\end{array}$}
}
\]
we have that 
$\phi_\lambda(T)$ and $\phi_\lambda(T')$ are respectively the
permutations (in one-line notation) $1234567$
and $3567241$. Moreover $T$ is the English filling of $\lambda = (4,2,1)$. 
\end{example}

Next we introduce a different reading of fillings which 
appears in the theory of highest forms and
Hessenberg varieties developed by Tymoczko in \cite{Tym06} (but the
terminology we use is new). In particular, this reading plays a significant role in
our poset pinball methods in
Sections~\ref{sec:betti}-\ref{sec:pinball} (cf. in particular
Theorem~\ref{theorem:paving}).

\begin{definition}\label{our filling}
Let $\lambda$ be a Young diagram. 
Let $T$ be a filling of a Young diagram with alphabet
$[n]$ for some $n \in \N$. 
By the \textbf{rotated English
    reading} of $T$ we mean the reading of the entries of
  $T$ from the bottom to the top along columns, starting at
  the leftmost column and proceeding to the rightmost column. The word
  of $T$ 
  obtained via the rotated English reading is the \textbf{rotated
    English word} of $T$. 
Let $\lambda$ be a Young diagram with $n$ boxes.  The \textbf{rotated English
   filling} of $\lambda$ is the filling $T$ of $\lambda$
 with $[n]$ such that its rotated English reading is the identity
 permutation in $S_n$. 
\end{definition}

\begin{example}\label{example:rotated english}
  Suppose that $\lambda = \myvcenter
{\def\lr#1{\multicolumn{1}{|@{\hspace{.6ex}}c@{\hspace{.6ex}}|}{\raisebox{-.3ex}{$#1$}}}
\raisebox{-.6ex}{$\begin{array}[b]{cccc}
\cline{1-1}\cline{2-2}\cline{3-3}\cline{4-4}
\lr{\phantom{1}}&\lr{\phantom{1}}&\lr{\phantom{1}}&\lr{\phantom{1}}\\
\cline{1-1}\cline{2-2}\cline{3-3}\cline{4-4}
\lr{\phantom{1}}&\lr{\phantom{1}}\\
\cline{1-1}\cline{2-2}
\lr{\phantom{1}}\\
\cline{1-1}
\end{array}$}
}
$.  Then the
  rotated English filling of $\lambda$ is the filling
$\myvcenter
{\def\lr#1{\multicolumn{1}{|@{\hspace{.6ex}}c@{\hspace{.6ex}}|}{\raisebox{-.3ex}{$#1$}}}
\raisebox{-.6ex}{$\begin{array}[b]{cccc}
\cline{1-1}\cline{2-2}\cline{3-3}\cline{4-4}
\lr{3}&\lr{5}&\lr{6}&\lr{7}\\
\cline{1-1}\cline{2-2}\cline{3-3}\cline{4-4}
\lr{2}&\lr{4}\\
\cline{1-1}\cline{2-2}
\lr{1}\\
\cline{1-1}
\end{array}$}
}.
$
\end{example}

\begin{remark} 
Note that the rotated English filling is not the same thing as the
conjugate of the English filling of the conjugate Young diagram. 
For instance for the $\lambda$ in Example~\ref{example:rotated english} the 
conjugate of the English filling of the conjugate Young diagram $\tilde{\lambda}$ is 
$\myvcenter
{\def\lr#1{\multicolumn{1}{|@{\hspace{.6ex}}c@{\hspace{.6ex}}|}{\raisebox{-.3ex}{$#1$}}}
\raisebox{-.6ex}{$\begin{array}[b]{cccc}
\cline{1-1}\cline{2-2}\cline{3-3}\cline{4-4}
\lr{1}&\lr{4}&\lr{6}&\lr{7}\\
\cline{1-1}\cline{2-2}\cline{3-3}\cline{4-4}
\lr{2}&\lr{5}\\
\cline{1-1}\cline{2-2}
\lr{3}\\
\cline{1-1}
\end{array}$}
}
$
whereas the rotated English filling of $\lambda$ is 
$\myvcenter
{\def\lr#1{\multicolumn{1}{|@{\hspace{.6ex}}c@{\hspace{.6ex}}|}{\raisebox{-.3ex}{$#1$}}}
\raisebox{-.6ex}{$\begin{array}[b]{cccc}
\cline{1-1}\cline{2-2}\cline{3-3}\cline{4-4}
\lr{3}&\lr{5}&\lr{6}&\lr{7}\\
\cline{1-1}\cline{2-2}\cline{3-3}\cline{4-4}
\lr{2}&\lr{4}\\
\cline{1-1}\cline{2-2}
\lr{1}\\
\cline{1-1}
\end{array}$}.
}
$
\end{remark}

\begin{remark} 
In the next section we develop a more
general framework in which both Definition~\ref{english} and
Definition~\ref{our filling} are special cases, but we do not
need this perspective here. 
\end{remark}

Given a Young diagram with $n$ boxes and any filling $T$ of $\lambda$ by
$[n]$, we now construct a matrix we call
the \textbf{adjacent-pair matrix}. Our construction is a 
generalization of a procedure sketched by
Tymoczko in \cite[Section 4]{Tym06} (see in particular \cite[Figure 4]{Tym06}). 
We begin by defining adjacency in $\lambda$ and in a filling $T$.

\begin{definition}
Let $\lambda$ be a Young diagram.  We say that two
  boxes of $\lambda$ are \textbf{adjacent} if the two boxes are in the
  same row, and one box is directly to the left of the other.  That is,
  the two boxes are of the form $\myvcenter
{\def\lr#1{\multicolumn{1}{|@{\hspace{.6ex}}c@{\hspace{.6ex}}|}{\raisebox{-.3ex}{$#1$}}}
\raisebox{-.6ex}{$\begin{array}[b]{cc}
\cline{1-1}\cline{2-2}
\lr{\phantom{1}}&\lr{\phantom{1}}\\
\cline{1-1}\cline{2-2}
\end{array}$}
}$ within the Young
  diagram $\lambda$.  Similarly, given a filling $T$ of 
  $\lambda$, we say that two entries of $\mathcal{M}$ are {\bf
    adjacent}, or that they form an \textbf{adjacent pair}, if they occur in adjacent boxes.
\end{definition}

\begin{example}
For
\[
T = {\def\lr#1{\multicolumn{1}{|@{\hspace{.6ex}}c@{\hspace{.6ex}}|}{\raisebox{-.3ex}{$#1$}}}
\raisebox{-.6ex}{$\begin{array}[b]{ccc}
\cline{1-1}\cline{2-2}\cline{3-3}
\lr{1}&\lr{2}&\lr{3}\\
\cline{1-1}\cline{2-2}\cline{3-3}
\lr{4}&\lr{5}\\
\cline{1-1}\cline{2-2}
\lr{6}\\
\cline{1-1}
\end{array}$}
}
\]
the pairs $\{1,2\}$, $\{2, 3\}$, and $\{4,5\}$ are the adjacent pairs of
entries of $T$.
\end{example}

\begin{definition} 
Let $\lambda$
  be a Young diagram with $n$ boxes and $T$ a
  filling of $\lambda$ with entries from $[n]$.  Then we define the
  \textbf{adjacent-pair matrix} corresponding to $T$, denoted $N_T$, to be the 
  matrix $N_T = (a_{ij})_{1 \leq i, j \leq n}$ such that its $(i,j)$-th entry
  is given by 
\[
a_{ij} := 
\begin{cases}
1& \text{if $i$ and $j$ are adjacent in $T$ and $i$ is left of $j$},\\ 0& \text{otherwise}.
\end{cases}
\]
\end{definition}

\begin{example}
Suppose that $\lambda = \myvcenter
{\def\lr#1{\multicolumn{1}{|@{\hspace{.6ex}}c@{\hspace{.6ex}}|}{\raisebox{-.3ex}{$#1$}}}
\raisebox{-.6ex}{$\begin{array}[b]{ccc}
\cline{1-1}\cline{2-2}\cline{3-3}
\lr{\phantom{1}}&\lr{\phantom{1}}&\lr{\phantom{1}}\\
\cline{1-1}\cline{2-2}\cline{3-3}
\lr{\phantom{1}}&\lr{\phantom{1}}\\
\cline{1-1}\cline{2-2}
\lr{\phantom{1}}\\
\cline{1-1}
\end{array}$}
}$ and that
$\myvcenter T  = 
{\def\lr#1{\multicolumn{1}{|@{\hspace{.6ex}}c@{\hspace{.6ex}}|}{\raisebox{-.3ex}{$#1$}}}
\raisebox{-.6ex}{$\begin{array}[b]{ccc}
\cline{1-1}\cline{2-2}\cline{3-3}
\lr{3}&\lr{2}&\lr{4}\\
\cline{1-1}\cline{2-2}\cline{3-3}
\lr{1}&\lr{5}\\
\cline{1-1}\cline{2-2}
\lr{6}\\
\cline{1-1}
\end{array}$}
}.
$
Then
\[
N_T = \left[
\begin{array}{cccccc}
0&0&0&0&1&0 \\ 0&0&0&1&0&0 \\ 0&1&0&0&0&0 \\ 0&0&0&0&0&0 \\  0&0&0&0&0&0 \\  0&0&0&0&0&0\end{array}
\right].
\]
\end{example}

\begin{remark}\label{remark:english filling}
The adjacent-pair matrix $N_T$ corresponding to the English filling of
$\lambda$ is the nilpotent matrix in Jordan canonical form
corresponding to $\lambda$. 
For example if 
$\myvcenter T = 
{\def\lr#1{\multicolumn{1}{|@{\hspace{.6ex}}c@{\hspace{.6ex}}|}{\raisebox{-.3ex}{$#1$}}}
\raisebox{-.6ex}{$\begin{array}[b]{ccc}
\cline{1-1}\cline{2-2}\cline{3-3}
\lr{1}&\lr{2}&\lr{3}\\
\cline{1-1}\cline{2-2}\cline{3-3}
\lr{4}&\lr{5}\\
\cline{1-1}\cline{2-2}
\lr{6}\\
\cline{1-1}
\end{array}$}
}
$
then
\[
N_T = \begin{bmatrix} 0 & 1 & 0 & 0 & 0 & 0 \\
0 & 0 & 1 & 0 & 0 & 0 \\
0 & 0 & 0 & 0 & 0 & 0 \\
0 & 0 & 0 & 0 & 1 & 0 \\
0 & 0 & 0 & 0 & 0 & 0 \\
0 & 0 & 0 & 0 & 0 & 0 
\end{bmatrix}.
\]
\end{remark}

The following is a basic computation which relates adjacent-pair
matrices to highest forms. 
Given a permutation $\sigma \in S_n$ by slight abuse
of notation we denote also by $\sigma$ its $n \times n$ permutation
matrix with respect to the standard basis of $\C^n$, i.e., the matrix
with $i$-th column equal to the standard basis vector
$e_{\sigma(i)}$.

\begin{theorem}\label{adjacency}
  Let $N$ be an $n \times n$ nilpotent matrix in Jordan canonical form with
  weakly decreasing sizes of Jordan blocks. Let $\lambda_N$ be the
  corresponding Young diagram and let
 $L_N: \C^n \rightarrow \C^n$ be the linear operator
with matrix 
  $N$ with respect to the standard basis 
  of $\C^n$. 
  Let $T$ be a filling on $\lambda_N$ with alphabet $[n]$ and
  $\sigma:=\phi_\lambda(T) \in S_n$ 
  the permutation given by the English
  word of $T$.
  Then the adjacent-pair matrix $N_T$ corresponding to $T$ is equal to 
the conjugate
  $\sigma N\sigma^{-1}$, i.e., $N_T$ is the matrix of $L_N$ with respect
  to the basis $\{e_{\sigma^{-1}(1)}, \ldots, e_{\sigma^{-1}(n)}\}$.
\end{theorem}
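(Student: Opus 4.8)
The plan is to reduce to the case of the English filling of $\lambda := \lambda_N$, for which the adjacent-pair matrix is already known to equal $N$ by Remark~\ref{remark:english filling}, and then to transport the computation along the relabeling of boxes encoded by $\sigma$. Write $T_0$ for the English filling of $\lambda$, so that $\phi_\lambda(T_0) = \mathrm{id}$ by Definition~\ref{english} and $N_{T_0} = N$ by Remark~\ref{remark:english filling}. The key observation, which simply unwinds the definition of $\phi_\lambda$, is this: if $k$ denotes the position of a box of $\lambda$ in the English reading, then that box carries the entry $k$ in $T_0$ and the entry $\sigma(k)$ in $T$ (this is exactly the statement that the English word of $T$ is the one-line notation of $\sigma$). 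Equivalently, for each $i \in [n]$ the box of $\lambda$ carrying the entry $i$ in $T$ is the same box as the one carrying the entry $\sigma^{-1}(i)$ in $T_0$.

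Next I would read off $(N_T)_{ij}$ directly from the definition of the adjacent-pair matrix. By the previous paragraph, $i$ and $j$ are adjacent in $T$ with $i$ to the left of $j$ if and only if the boxes carrying $\sigma^{-1}(i)$ and $\sigma^{-1}(j)$ in $T_0$ are adjacent with the former to the left of the latter; here one uses only that adjacency is a property of the pair of boxes, not of the labels placed in them. Hence $(N_T)_{ij} = (N_{T_0})_{\sigma^{-1}(i),\sigma^{-1}(j)} = N_{\sigma^{-1}(i),\sigma^{-1}(j)}$ for all $1 \le i,j \le n$.

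Finally I would match this with the two claimed descriptions. With the paper's convention that the permutation matrix of $\sigma$ has $i$-th column $e_{\sigma(i)}$, a routine index computation gives $(\sigma N \sigma^{-1})_{ij} = N_{\sigma^{-1}(i),\sigma^{-1}(j)}$, so $N_T = \sigma N \sigma^{-1}$; and the standard change-of-basis formula identifies $\sigma N \sigma^{-1}$ with the matrix of $L_N$ in the reordered basis $\{e_{\sigma^{-1}(1)}, \ldots, e_{\sigma^{-1}(n)}\}$, since that basis, written in the standard basis, has associated matrix $\sigma^{-1}$. The content of the proof is essentially combinatorial — tracking which box holds which label — and the only point I expect to require care is the bookkeeping of the direction of $\sigma$ (i.e. $\sigma$ versus $\sigma^{-1}$, and whether it acts on row or column indices from the left or the right). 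I would therefore fix these conventions explicitly at the outset and sanity-check them against Remark~\ref{remark:english filling} (the case $\sigma = \mathrm{id}$) together with one non-identity filling.
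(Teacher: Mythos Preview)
Your proof is correct and follows essentially the same approach as the paper: both arguments hinge on the observation that the box carrying entry $i$ in $T$ sits at English-reading position $\sigma^{-1}(i)$, and then translate adjacency in $T$ into the condition $L_N(e_{\sigma^{-1}(j)}) = e_{\sigma^{-1}(i)}$. Your version is in fact slightly cleaner, since by establishing the entrywise identity $(N_T)_{ij} = N_{\sigma^{-1}(i),\sigma^{-1}(j)}$ directly you avoid the paper's preliminary counting of the number of nonzero entries.
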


\begin{proof}
By the 
definition of the adjacent-pair matrix, 
given a Young diagram $\lambda_N$ with $n$ boxes and $\ell$
rows, $N_T$ contains a $1$ in $n-\ell$ entries, and all
other entries are $0$.  Similarly, an $n\times n$ nilpotent
matrix $N$, with corresponding Young diagram $\lambda_N$,
contains a $1$ in $n-\ell$ entries and $0$'s elsewhere, and so does any conjugate 
$\sigma N \sigma^{-1}$ for $\sigma$ a permutation (matrix).

Now let $a_{ij}$ denote the $(i,j)$-th entry of $M$.  The preceding
discussion implies that in order to prove the proposition it suffices
to check that if $a_{ij}=1$ for some $i$ and $j$, then the matrix of
$L$ with respect to the basis $\{e_{\sigma^{-1}(1)}, \ldots,
e_{\sigma^{-1}(n)}\}$ also contains a $1$ at the $(i,j)$-th entry.
Suppose that $a_{ij}=1$. By construction this means that $i$ and $j$
are adjacent in the filling $T$ with $i$ to the left of
$j$. Hence by definition of the English reading the one-line notation of $\sigma$ is of the form $\sigma =
\cdots i \, j \cdots$. Suppose that the $i$ occurs at the $\ell$-th
spot of the one-line notation, so $\sigma(\ell)=i, \sigma(\ell+1) =
j$. Then $\sigma^{-1}(i) = \ell, \sigma^{-1}(j) = \ell+1$. Since $i$
and $j$ are adjacent, 
we also know that 
$L(e_{\ell+1}) = e_\ell$ (cf. Remark~\ref{remark:english filling})  or equivalently $L(e_{\sigma^{-1}(j)}) =
e_{\sigma^{-1}(i)}$. This implies that the matrix of $L$ written with
respect to the basis $\{e_{\sigma^{-1}(1)}, \ldots,
e_{\sigma^{-1}(n)}\}$ has a $1$ in the $(i,j)$-th entry, as desired. 

\end{proof}

We now wish to determine the set of fillings $T$ such that the
adjacent-pair matrix $N_T$ is in highest form. 
Throughout this discussion we use the following assumptions and notation. Let $\lambda$
be a Young diagram with $n$ boxes, $\ell$ rows, $k$ rows of distinct
length, and $r$ columns. If $\lambda_1 > \lambda_2 > \cdots
> \lambda_k$ are the distinct row lengths of $\lambda$ we let $d_i$
for 
$1 \leq i \leq k$ denote the number of rows of $\lambda$ with length
$\lambda_i$. Thus the row lengths of $\lambda$ are 
\[
(\underbrace{\lambda_1, \lambda_1, \ldots, \lambda_1}_{d_1}, 
\underbrace{\lambda_2, \lambda_2, \ldots, \lambda_2}_{d_2}, 
\cdots, 
\underbrace{\lambda_k, \lambda_k, \ldots, \lambda_k}_{d_k})
\]
with $\sum_{i=1}^k d_i = \ell$. We also let $(\mu_1 \geq \mu_2 \geq
\cdots \geq \mu_r)$ denote the column lengths of $\lambda$. Note
$\mu_1 = \ell$. 

We begin with some observations about the pivots of an adjacent-pair
matrix $N_T$. 

\begin{lemma}\label{lemma:pivots}
Let $\lambda$ be a Young diagram with $n$ boxes and $T$ a filling of
$\lambda$ by $[n]$. Let 
$N_T$ be the adjacent-pair matrix
  of $T$. Then each matrix entry in $N_T$ which is equal to $1$ is a pivot of $N_T$. 

\end{lemma}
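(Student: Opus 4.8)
The plan is to unwind the definitions and check the pivot condition directly. Recall that by the definition of the adjacent-pair matrix, each nonzero entry $a_{ij} = 1$ corresponds to a pair of boxes in $T$ that are adjacent in some row of $\lambda$, with $i$ placed in the left box and $j$ placed in the right box. Since a filling is injective, for a fixed box there is at most one box directly to its left and at most one directly to its right; consequently, in each row of $N_T$ there is at most one $1$ (corresponding to the box to the right of the box labelled by that row index), and in each column of $N_T$ there is at most one $1$ (corresponding to the box to the left of the box labelled by that column index). I would state this "at most one $1$ per row and per column" observation first, as it is the crux of the argument.

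Given that observation, fix an entry $a_{ik} = 1$; I must verify the pivot condition, namely that $a_{ij} = 0$ for all $j < k$ and $a_{jk} = 0$ for all $j > i$. The first condition follows immediately because $a_{ik} = 1$ is the unique nonzero entry in row $i$, so in particular every other entry of row $i$ — including those with $j < k$ — vanishes. The second condition follows symmetrically: $a_{ik} = 1$ is the unique nonzero entry in column $k$, so every other entry of column $k$ — including those with $j > i$ — vanishes. This establishes that $a_{ik}$ is a pivot of $N_T$, completing the proof.

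I do not anticipate any serious obstacle here; the only subtlety is making the "at most one $1$ per row/column" claim airtight, which rests entirely on the injectivity of the filling $T$ together with the fact that the adjacency relation on boxes of a Young diagram pairs each box with at most one box to its left and at most one to its right. Once that is in hand, the pivot condition is immediate from the definition, since the pivot condition for an entry that is the unique nonzero entry of both its row and its column is automatically satisfied (the "below and to the left" entries all lie in that row or that column). I would therefore keep the write-up short, emphasizing the row/column uniqueness and then invoking the definition of pivot.
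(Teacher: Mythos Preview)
Your proposal is correct and follows essentially the same approach as the paper: both arguments reduce to the observation that, by injectivity of the filling, each row and each column of $N_T$ contains at most one nonzero entry, whence every $1$ is automatically a pivot. The paper's write-up is slightly terser in the final step (it simply asserts the implication from row/column uniqueness to the pivot condition), while you spell out the verification of the pivot condition more explicitly; otherwise the two proofs are the same.
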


\begin{proof}
By definition of the adjacent-pair matrix,
its non-zero entries are in one-to-one
  correspondence with the distinct adjacent pairs \(
\begin{array}{|c|c|}
    \cline{1-2} i & j \\ \cline{1-2} \end{array}
\) which appear in $T$. Each box of $\lambda$ which is not in the
leftmost (i.e. first) column of $\lambda$ is the right hand box of
precisely one such adjacent pair \(\begin{array}{|c|c|}
    \cline{1-2} \phantom{i} & \phantom{j} \\ \cline{1-2} \end{array}
\) of boxes. Hence each such accounts for precisely one entry of $N_T$
equal to $1$.

By definition of fillings, each entry in $T$ appears only once. In
particular this means that for any $i \in [n]$, the index $i$ appears
at most once as either the right hand box \(\begin{array}{|c|c|}
    \cline{1-2} j & i \\ \cline{1-2} \end{array}
\) or the left hand box \(\begin{array}{|c|c|}
    \cline{1-2} i & j \\ \cline{1-2} \end{array}
\) in an adjacent pair in $T$. Thus by definition of the adjacent-pair
matrix there exists at most one entry equal to $1$ in each row and
each column of $N_T$. Since all other entries are equal to $0$, 
this in turn implies that each $1$ that appears in $N_T$
is in fact a pivot. 
\end{proof}

\begin{lemma}\label{lemma:N_T equivalence}
Let $\lambda$ be a Young diagram with $n$ boxes and $T$ a filling of
$\lambda$ by $[n]$. Let 
$N_T$ be the adjacent-pair matrix
  of $T$. 
Then $N_T$ is in highest form if and only if $T$
satisfies the following conditions:
\begin{enumerate}
\item[(a)] the leftmost column of $\lambda$ is filled with the
  integers $\{1,2,\ldots,\mu_1=\ell\}$, and 
\item[(b)] if \(\begin{array}{|c|c|}
    \cline{1-2} i_1 & j_1 \\ \cline{1-2} \end{array}
\) and \(\begin{array}{|c|c|}
    \cline{1-2} i_2 & j_2 \\ \cline{1-2} \end{array}
\) both appear as adjacent pairs in $T$ then 
\[
i_1 < i_2 \textup{ if and only if } j_1 < j_2.
\]
\end{enumerate}
\end{lemma}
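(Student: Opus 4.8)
The plan is to prove the two implications separately, using the description of pivots of $N_T$ established in Lemma~\ref{lemma:pivots}. Recall that by Lemma~\ref{lemma:pivots} every nonzero entry of $N_T$ is a pivot, and that the nonzero entries of $N_T$ are in bijection with the adjacent pairs $\begin{array}{|c|c|}\cline{1-2} i & j \\ \cline{1-2}\end{array}$ occurring in $T$: such a pair contributes a $1$ in row $i$, column $j$. Since $N_T$ is nilpotent, we must first check it is upper-triangular in order to even speak of highest form; the key point is that being upper-triangular is equivalent to condition~(a). Indeed, an entry $a_{ij}=1$ lies strictly above the diagonal exactly when $i<j$ for every adjacent pair, and I claim this forces (and is forced by) the leftmost column being filled with $\{1,\dots,\ell\}$. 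The reverse direction is the subtle one: if some adjacent pair has $i>j$, there is an entry below the diagonal; conversely if (a) fails, one argues that some row not in the first column must contain an entry smaller than the entry immediately to its left, producing a sub-diagonal $1$. The cleanest way to see the equivalence is to note that the set of ``left-hand entries of adjacent pairs'' is exactly the complement in $[n]$ of the set of entries in the first column; upper-triangularity says every such left-hand entry is smaller than its right-hand partner, and a counting/extremal argument (looking at the largest entry, or the entry $n$, and where it can sit) shows this holds iff the first column is exactly $\{1,\dots,\ell\}$. I expect this bookkeeping to be the first real step.

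Once upper-triangularity (equivalently (a)) is in hand, recall the definition: for each $i$, $r_i$ is the row index of the pivot in column $i$ if column $i$ contains a pivot, and $r_i=0$ otherwise; and $N_T$ is in highest form iff $r_1 \le r_2 \le \cdots \le r_n$. By Lemma~\ref{lemma:pivots} column $j$ contains a (necessarily unique) pivot iff $j$ is the right-hand entry of some adjacent pair $\begin{array}{|c|c|}\cline{1-2} i & j \\ \cline{1-2}\end{array}$, in which case $r_j = i$; otherwise $r_j = 0$ (this happens exactly when $j$ lies in the first column of $\lambda$). So, assuming (a), the columns $j$ with $r_j = 0$ are precisely the $\ell$ smallest integers $1,\dots,\ell$ — wait, not quite: $r_j=0$ for $j$ in the first column, and by (a) those are $\{1,\dots,\ell\}$, so $r_1 = \cdots = r_\ell = 0$, and for $j > \ell$ we have $r_j = i$ where $i$ is the left neighbour of $j$. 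Hence the inequality $r_1 \le \cdots \le r_n$ reduces to: for all $j_1 < j_2$ both exceeding $\ell$ (equivalently both right-hand entries of adjacent pairs), writing $i_1, i_2$ for their respective left neighbours, we need $i_1 \le i_2$; and since the $i$'s are distinct this is $i_1 < i_2$. That is precisely condition~(b). (The pairs with a first-column right-hand box don't exist, and the case where one of $j_1,j_2$ is $\le\ell$ is handled by the $r_j=0$ block already being nondecreasing into the positive values, which needs the extra observation that $r_j = i \ge 1 > 0$ for $j>\ell$, automatic.)

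So the argument is: (i) show $N_T$ upper-triangular $\iff$ (a); (ii) under (a), translate ``$r_j = 0$ for $j\le\ell$, $r_j = (\text{left neighbour of }j)$ for $j>\ell$'' from Lemma~\ref{lemma:pivots}; (iii) observe $r_1\le\cdots\le r_n$ then unwinds exactly to (b). The main obstacle is step~(i): carefully proving that upper-triangularity of $N_T$ is equivalent to condition~(a) rather than merely implied by it. The forward implication (highest form $\Rightarrow$ upper-triangular $\Rightarrow$ every adjacent pair $i<j$ $\Rightarrow$ first column entries are the ones never appearing as a left-hand entry, and these must be $\{1,\dots,\ell\}$) requires the small combinatorial lemma that a subset $S\subseteq[n]$ of size $\ell$ is the complement of the left-hand-entry set of a filling whose every adjacent pair is increasing if and only if $S=\{1,\dots,\ell\}$ — I'd prove this by induction on the row lengths, placing $1,2,\ldots$ greedily down the first column, or by a direct minimality argument on where the entry $1$ can go (it has no smaller neighbour so it cannot be a right-hand box, forcing it into column one, then induct). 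Everything after step~(i) is a direct unwinding of definitions and of Lemma~\ref{lemma:pivots}.
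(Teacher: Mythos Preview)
Your proposal has a genuine gap: the claimed equivalence ``$N_T$ is upper-triangular $\iff$ condition (a)'' in your step~(i) is false in both directions.

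For the forward direction, take $\lambda = (2,1)$ with filling
\[
\begin{array}{|c|c|}
\cline{1-2} 1 & 2 \\ \cline{1-2}
\end{array}
\quad
\begin{array}{|c|}
\cline{1-1} 3 \\ \cline{1-1}
\end{array}
\]
The only adjacent pair is $(1,2)$, so $N_T$ has a single nonzero entry $a_{12}=1$ and is strictly upper-triangular; yet the first column is $\{1,3\}\neq\{1,2\}$, so (a) fails. (Note that this $N_T$ is \emph{not} in highest form, since $r_1=0\le r_2=1$ but $r_3=0<r_2$; so there is no contradiction with the lemma.) For the reverse direction, take $\lambda=(3,1)$ with first row $1\,|\,4\,|\,3$ and second row $2$: then (a) holds, but the adjacent pair $(4,3)$ puts a $1$ below the diagonal.

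The paper's proof does not attempt to isolate upper-triangularity. For the forward direction it uses the \emph{full} highest-form hypothesis to obtain (a): one observes that $r_j=0$ exactly when $j$ lies in the first column of $T$ (column $j$ of $N_T$ is identically zero), and $r_j>0$ otherwise; since the sequence $r_1\le r_2\le\cdots\le r_n$ is nondecreasing, all the zero values must occur before any positive value, forcing the first-column entries to be exactly $\{1,\dots,\ell\}$. Condition (b) then follows from the same nondecreasing-pivot condition applied to the remaining columns, essentially as in your steps (ii)--(iii). For the converse, one cannot get upper-triangularity from (a) alone; it requires (a) and (b) together (and indeed the paper's subsequent Lemma~\ref{lemma:column fillings}, which uses both (a) and (b), shows that column $s$ is filled by a specific interval of integers, from which $i<j$ for every adjacent pair is immediate). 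Once you abandon step~(i) and instead argue directly from the pivot sequence as above, your steps (ii)--(iii) are correct and match the paper's approach.
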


\begin{proof}
First suppose $N_T$ is in highest form. By Lemma~\ref{lemma:pivots} if 
  \(\begin{array}{|c|c|}
    \cline{1-2} i & j \\ \cline{1-2} \end{array}
\) appears as an adjacent pair in $T$ then $r_j = i > 0$. For $j \in
[n]$, the index $j$ does not appear in the right hand box of any
adjacent pair in $T$ (so the $j$-th column of $N_T$ is identically
$0$) precisely when $j$ appears in the leftmost (i.e. first) column of
$\lambda$. In this case, by definition of pivots, $r_j = 0$. Since $N_T$
is in highest form we must have $r_1 \leq r_2 \leq \cdots \leq r_n$
and in particular any $r_j$ with $r_j=0$ must occur before any $r_j$
with $r_j>0$. We conclude that $j$ is in the leftmost column of
$\lambda$ precisely when $1 \leq j \leq \mu_1 =\ell$. This proves
(a). Now suppose \(\begin{array}{|c|c|}
    \cline{1-2} i_1 & j_1 \\ \cline{1-2} \end{array}
\) and \(\begin{array}{|c|c|}
    \cline{1-2} i_2 & j_2 \\ \cline{1-2} \end{array}
\) both appear as adjacent pairs in $T$. Then again from
Lemma~\ref{lemma:pivots} we know $r_{j_1} = i_1, r_{j_2} = i_2$. If
$N_T$ is in highest form then the pivots must be increasing so
$j_1<j_2$ if and only if $i_1 < i_2$. This proves (b). If
$T$ satisfies conditions (a) and (b) then reversing this reasoning shows
that $N_T$ must be in highest form. 
\end{proof}

We now describe an algorithm which produces a filling $T$ of $\lambda$
which satisfies certain conditions, starting from the data of a
filling of the leftmost column of $\lambda$. As we show in
Theorem~\ref{theorem:N_T and algorithm} below, the algorithm gives
an explicit method for producing precisely those fillings $T$ for
which the corresponding $N_T$ are in highest form. We follow notation
established above. 
\begin{equation}\label{eq:def algorithm} 
\begin{minipage}{0.7\linewidth}
  \begin{enumerate}
  \item Fix an arbitrary filling of the leftmost (i.e. first) column
    of $\lambda$ with the alphabet $[\mu_1]$. This filling specifies a
    linear ordering of the rows of $\lambda$. 
 \item For the $s$-th column of $\lambda$ for $2 \leq s
    \leq r$, place the $\mu_s$ integers $\{(\sum_{t=1}^{s-1} \mu_t)+1,
    \ldots, \sum_{t=1}^s \mu_t\}$ in the $\mu_s$ boxes of the $s$-th
    column in the linear order specified by step (1). 
  \end{enumerate}
\end{minipage}
\end{equation}
Note that, by definition of this algorithm, the filling of the leftmost column
completely specifies the rest of the filling. 

\begin{example}
If the Young diagram $\lambda$ and the initial filling of its
leftmost column are 
\[
{\def\lr#1{\multicolumn{1}{|@{\hspace{.6ex}}c@{\hspace{.6ex}}|}{\raisebox{-.3ex}{$#1$}}}
\raisebox{-.6ex}{$\begin{array}[b]{ccccc}
\cline{1-1}\cline{2-2}\cline{3-3}\cline{4-4}\cline{5-5}
\lr{\phantom{1}}&\lr{\phantom{1}}&\lr{\phantom{1}}&\lr{\phantom{1}}&\lr{\phantom{1}}\\
\cline{1-1}\cline{2-2}\cline{3-3}\cline{4-4}\cline{5-5}
\lr{\phantom{1}}&\lr{\phantom{1}}&\lr{\phantom{1}}&\lr{\phantom{1}}\\
\cline{1-1}\cline{2-2}\cline{3-3}\cline{4-4}
\lr{\phantom{1}}&\lr{\phantom{1}}&\lr{\phantom{1}}&\lr{\phantom{1}}\\
\cline{1-1}\cline{2-2}\cline{3-3}\cline{4-4}
\lr{\phantom{1}}&\lr{\phantom{1}}\\
\cline{1-1}\cline{2-2}
\lr{\phantom{1}}\\
\cline{1-1}
\end{array}$}
}
\quad \textup{ and } \quad 
{\def\lr#1{\multicolumn{1}{|@{\hspace{.6ex}}c@{\hspace{.6ex}}|}{\raisebox{-.3ex}{$#1$}}}
\raisebox{-.6ex}{$\begin{array}[b]{ccccc}
\cline{1-1}\cline{2-2}\cline{3-3}\cline{4-4}\cline{5-5}
\lr{5}&\lr{\phantom{1}}&\lr{\phantom{1}}&\lr{\phantom{1}}&\lr{\phantom{1}}\\
\cline{1-1}\cline{2-2}\cline{3-3}\cline{4-4}\cline{5-5}
\lr{1}&\lr{\phantom{1}}&\lr{\phantom{1}}&\lr{\phantom{1}}\\
\cline{1-1}\cline{2-2}\cline{3-3}\cline{4-4}
\lr{4}&\lr{\phantom{1}}&\lr{\phantom{1}}&\lr{\phantom{1}}\\
\cline{1-1}\cline{2-2}\cline{3-3}\cline{4-4}
\lr{3}&\lr{\phantom{1}}\\
\cline{1-1}\cline{2-2}
\lr{2}\\
\cline{1-1}
\end{array}$}
}
\]
then the algorithm~\eqref{eq:def algorithm} determines the rest of the
filling to be 
\[
{\def\lr#1{\multicolumn{1}{|@{\hspace{.6ex}}c@{\hspace{.6ex}}|}{\raisebox{-.3ex}{$#1$}}}
\raisebox{-.6ex}{$\begin{array}[b]{ccccc}
\cline{1-1}\cline{2-2}\cline{3-3}\cline{4-4}\cline{5-5}
\lr{5}&\lr{9}&\lr{12}&\lr{15}&\lr{16}\\
\cline{1-1}\cline{2-2}\cline{3-3}\cline{4-4}\cline{5-5}
\lr{1}&\lr{6}&\lr{10}&\lr{13}\\
\cline{1-1}\cline{2-2}\cline{3-3}\cline{4-4}
\lr{4}&\lr{8}&\lr{11}&\lr{14}\\
\cline{1-1}\cline{2-2}\cline{3-3}\cline{4-4}
\lr{3}&\lr{7}\\
\cline{1-1}\cline{2-2}
\lr{2}\\
\cline{1-1}
\end{array}$}
}.
\]
\end{example}

\begin{remark}\label{remark:T rotated english}
  Suppose the filling of the leftmost column of $\lambda$ is given by
  placing the integer $i$, for $1 \leq i \leq \mu_1$, in the $i$-th
  box from the bottom. Then the filling of $\lambda$ obtained by
  applying the algorithm~\eqref{eq:def algorithm} is precisely the
  rotated English filling of Definition~\ref{our filling}. 
\end{remark}

We now prove that the fillings $T$ for which $N_T$ is in highest form
are precisely those produced from the algorithm~\eqref{eq:def algorithm}. 

\begin{theorem}\label{theorem:N_T and algorithm}
Let $\lambda$ be a Young diagram with $n$ boxes and $T$ a filling of
$\lambda$ by $[n]$.
Then the adjacent-pair matrix
  $N_T$ is in highest form if and only if the 
  algorithm~\eqref{eq:def algorithm} applied to the filling of the 
  leftmost column of $T$ produces the filling $T$. 
\end{theorem}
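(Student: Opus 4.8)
The plan is to deduce the theorem from Lemma~\ref{lemma:N_T equivalence}: that lemma reduces the assertion ``$N_T$ is in highest form'' to the two combinatorial conditions (a) and (b) on the filling $T$, so it suffices to show that $T$ satisfies (a) and (b) if and only if the algorithm~\eqref{eq:def algorithm}, applied to the filling of the leftmost column of $T$, reproduces $T$. I will use throughout the following observation: by (a), or by step~(1) of the algorithm, the leftmost column of $T$ is in either case filled by $[\mu_1]=[\ell]$, so it is a legitimate input for step~(1), and it determines a linear order on the rows of $\lambda$.

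For the ``if'' direction, suppose $T$ is the output of the algorithm on its own leftmost column. Condition (a) is immediate from step~(1). For (b), note that by steps~(1)--(2) the entry of $T$ in row $\rho$ and column $s$ equals $(\sum_{t<s}\mu_t)+q$, where $q$ is the position of $\rho$ in the linear order restricted to rows of length $\geq s$; in particular column $s$ receives exactly the block $B_s:=\{(\sum_{t<s}\mu_t)+1,\ldots,\sum_{t\le s}\mu_t\}$, and these blocks are consecutive, disjoint, and increasing in $s$. Now take two adjacent pairs in $T$, one with left entry $i_1$ in column $s_1$ and right entry $j_1$ in column $s_1+1$, lying in row $\rho_1$, the other with left entry $i_2$ in column $s_2$, right entry $j_2$ in column $s_2+1$, lying in row $\rho_2$. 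If $s_1\neq s_2$, then since $i_1\in B_{s_1},\,i_2\in B_{s_2}$ and $j_1\in B_{s_1+1},\,j_2\in B_{s_2+1}$, we get $i_1<i_2 \iff s_1<s_2 \iff j_1<j_2$. If $s_1=s_2$, then $\rho_1$ and $\rho_2$ both have length $\geq s_1+1$, so they occupy the same relative position in the restricted orders for column $s_1$ and for column $s_1+1$; hence both $i_1<i_2$ and $j_1<j_2$ are equivalent to ``$\rho_1$ precedes $\rho_2$''. Thus (b) holds, and Lemma~\ref{lemma:N_T equivalence} gives that $N_T$ is in highest form.

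For the ``only if'' direction, assume (a) and (b). I would prove by induction on $s$ that the set $C_s$ of entries of $T$ lying in column $s$ equals the block $B_s=\{(\sum_{t<s}\mu_t)+1,\ldots,\sum_{t\le s}\mu_t\}$, arranged down the column in the linear order on rows determined by the leftmost column --- i.e.\ that column $s$ of $T$ agrees with column $s$ of the algorithm's output. The base case $s=1$ is exactly condition (a), the linear order being the one this column defines. For the inductive step, put $m=\sum_{t\le s}\mu_t$, so by the inductive hypothesis $C_1\cup\cdots\cup C_s=[m]$ and therefore $C_{s+1}\cup\cdots\cup C_r=\{m+1,\ldots,n\}$. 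Suppose for contradiction that some $w\in C_{s+1}$ satisfies $w>w'$ for some $w'\in C_{s'}$ with $s'>s+1$. The box containing $w$ has an entry $i\in C_s\subseteq[m]$ immediately to its left, and the box containing $w'$ has an entry $a\in C_{s'-1}\subseteq\{m+1,\ldots,n\}$ immediately to its left; hence $i\le m<a$, so by (b) we must have $w<w'$, a contradiction. Therefore $C_{s+1}$ consists of the $\mu_{s+1}$ smallest elements of $\{m+1,\ldots,n\}$, that is, $C_{s+1}=\{m+1,\ldots,m+\mu_{s+1}\}=B_{s+1}$. For the order within column $s+1$: if $\rho_1$ precedes $\rho_2$ among the rows reaching column $s+1$, then by the inductive hypothesis their column-$s$ entries satisfy $i_1<i_2$, so (b) forces their column-$(s+1)$ entries to satisfy $j_1<j_2$; hence the block $B_{s+1}$ is placed in column $s+1$ in the linear order, exactly as the algorithm prescribes. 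This closes the induction, so $T$ coincides with the algorithm's output.

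The main obstacle is the ``only if'' direction: conditions (a) and (b) are only local constraints on horizontally adjacent entries, and the content of the theorem is that these local constraints rigidly force the column-by-column block structure produced by the algorithm. The induction on columns is the device that makes this work --- it lets one assume that the first $s$ columns already occupy an initial segment $[m]$ of the integers and then transport this fact, via condition (b), to column $s+1$; once this is in place the remaining verifications (the ordering within each column, the ``if'' direction) are routine.
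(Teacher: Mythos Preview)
Your proof is correct and follows essentially the same approach as the paper: both directions reduce via Lemma~\ref{lemma:N_T equivalence} to the conditions (a) and (b), and both are handled by an induction on columns using (b) to propagate block membership and within-column order. The only difference is organizational---the paper isolates the statement that column $s$ contains exactly the block $B_s$ as a separate lemma (Lemma~\ref{lemma:column fillings}) and then handles the ordering in the main proof, whereas you fold both into a single induction.
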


To prove the proposition we use the following lemma. We follow notation established above. 
\begin{lemma}\label{lemma:column fillings}
Let $\lambda$ be a Young diagram with $n$ boxes and $T$ a filling of
$\lambda$ by $[n]$. 
Suppose $T$ satisfies the conditions (a)
  and (b) of Lemma~\ref{lemma:N_T equivalence}. Then the $s$-th column
  of $\lambda$ for $1 \leq s \leq r$ contains precisely the integers
  $\{(\sum_{t=1}^{s-1} \mu_t)+1, \ldots, \sum_{t=1}^s \mu_t\}$. 
\end{lemma}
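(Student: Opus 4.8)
The plan is to argue by induction on the column index $s$, tracking which integers can appear in each column using conditions (a) and (b). The base case $s=1$ is precisely condition (a): the leftmost column contains exactly $\{1,2,\ldots,\mu_1\}$, and $\sum_{t=1}^0 \mu_t = 0$, so the claimed set is $\{1,\ldots,\mu_1\}$ as required. For the inductive step, I would fix $s \geq 2$ and assume the $(s-1)$-st column contains exactly $\{(\sum_{t=1}^{s-2}\mu_t)+1,\ldots,\sum_{t=1}^{s-1}\mu_t\}$, and I want to show the $s$-th column contains exactly the next block of $\mu_s$ integers.

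First I would observe that every box in the $s$-th column (for $s \geq 2$) is the right-hand box of a unique adjacent pair $\begin{array}{|c|c|}\cline{1-2} i & j \\ \cline{1-2}\end{array}$ whose left-hand box lies in the $(s-1)$-st column; conversely, since $\lambda$ has weakly decreasing column lengths, the right-hand neighbours of the boxes in the $(s-1)$-st column are precisely the boxes of the $s$-th column (every box in column $s-1$ that has a box to its right, and there are exactly $\mu_s$ of them since $\mu_s \leq \mu_{s-1}$). So the entries $j$ in column $s$ are exactly the $j$'s appearing as the right member of an adjacent pair whose left member $i$ lies in column $s-1$. Now condition (b) says the map sending the left entry of an adjacent pair to its right entry is order-preserving on the set of such pairs; in particular, restricting to the $\mu_s$ adjacent pairs whose left box is in column $s-1$, the $\mu_s$ values $j$ in column $s$ appear in the same relative order as the corresponding values $i$ in column $s-1$. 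This does not yet pin down which integers appear, only their relative order, so the remaining point is a counting/range argument.

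To nail down the actual integers, I would use the following global fact: the integers $\{1,\ldots,n\}$ are partitioned among the columns, column $t$ receiving $\mu_t$ of them. By the inductive hypothesis, columns $1$ through $s-1$ together use exactly $\{1,\ldots,\sum_{t=1}^{s-1}\mu_t\}$. So it suffices to show that the $\mu_s$ integers in column $s$ are smaller than all the integers in columns $s+1,\ldots,r$; combined with the inductive hypothesis this forces column $s$ to receive exactly the next $\mu_s$ smallest available integers, namely $\{(\sum_{t=1}^{s-1}\mu_t)+1,\ldots,\sum_{t=1}^s\mu_t\}$. To see this ordering between columns, suppose $j$ lies in column $s$ and $j'$ lies in column $s' > s$; then $j'$ is the right box of an adjacent pair $\begin{array}{|c|c|}\cline{1-2} i' & j' \\ \cline{1-2}\end{array}$ with $i'$ in column $s'-1 \geq s$. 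If $s'-1 = s$, then $i'$ is in column $s$; an easy sub-argument (iterating condition (b) along a row, or a direct application of (b) to adjacent pairs sharing the column-$s$ box) shows $i' \geq$ the value $j$ or we can compare directly. More cleanly: applying the inductive hypothesis to reach column $s$, and then (b) repeatedly to push forward, one shows every entry in column $s$ is less than every entry in column $s+1$; I would phrase this as: by (b), for adjacent pairs $\begin{array}{|c|c|}\cline{1-2} i_1 & j_1 \\ \cline{1-2}\end{array}$ and $\begin{array}{|c|c|}\cline{1-2} i_2 & j_2 \\ \cline{1-2}\end{array}$ with $i_1$ in column $s-1$ and $i_2$ in column $s$, we have $i_1 < i_2$ (both in columns already understood, and $i_1 \leq \sum_{t=1}^{s-1}\mu_t < $ anything in column $s$ by IH applied one step — here I'd actually induct so that "column $s$ entries exceed column $s-1$ entries" is part of the statement), hence $j_1 < j_2$, i.e. column $s$ entries are below column $s+1$ entries.

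The main obstacle, and the step I'd be most careful about, is making the inter-column comparison rigorous: condition (b) only directly compares two adjacent pairs, so I should strengthen the induction hypothesis to include the monotonicity statement "every entry of column $s-1$ is strictly less than every entry of column $s$" alongside the set-equality statement, so that each inductive step has enough to work with. With that strengthening the argument becomes a clean double induction — the set statement and the between-columns monotonicity statement feeding each other — and the rest is the bookkeeping sketched above. Everything else (the bijection between column-$s$ boxes and adjacent pairs with left box in column $s-1$, using $\mu_s \leq \mu_{s-1}$) is routine.
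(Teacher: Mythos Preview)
Your approach is correct and is essentially the paper's argument: both use strong induction on the column index, both use that columns $1,\ldots,s-1$ together occupy exactly $\{1,\ldots,\sum_{t=1}^{s-1}\mu_t\}$, and both finish by applying condition (b) to two adjacent pairs, one with left entry in column $s-1$ (hence $\leq \sum_{t=1}^{s-1}\mu_t$) and one with left entry in some column $\geq s$ (hence $> \sum_{t=1}^{s-1}\mu_t$). The paper packages this as a one-line contradiction via pigeonhole, while you phrase it as the direct statement ``every entry of column $s$ is below every entry of columns $>s$''; these are equivalent.

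Your proposed strengthening of the induction hypothesis is unnecessary. The inter-column monotonicity you want is an \emph{immediate consequence} of the strong hypothesis, not an extra ingredient: once columns $1,\ldots,s-1$ are known to use up $\{1,\ldots,\sum_{t=1}^{s-1}\mu_t\}$, any entry in a column $\geq s$ automatically exceeds $\sum_{t=1}^{s-1}\mu_t$. So for $j$ in column $s$ with left neighbour $i$ (in column $s-1$) and $j'$ in column $s'>s$ with left neighbour $i'$ (in column $s'-1\geq s$), you get $i \leq \sum_{t=1}^{s-1}\mu_t < i'$ directly, and (b) gives $j<j'$. That is the whole argument; no auxiliary monotonicity clause or ``double induction'' is needed.
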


\begin{proof}
  We argue by induction. Condition (a) already implies the
  leftmost column is filled with $[\mu_1]$, which proves the base case
  $s=1$. Now suppose the first $s$ columns contain precisely the
  integers $\{1,2,\ldots, \sum_{t=1}^s \mu_t\}$. Suppose for a
  contradiction that some element $u$ in $\{(\sum_{t=1}^s \mu_t)+1,
  \ldots, \sum_{t=1}^{s+1}\mu_t\}$ appears in the $v$-th column for
  some $v>s+1$. Since there are precisely $\mu_{s+1}$ boxes in the
  $(s+1)$th column, this in turn implies that there must exist some $u'
  > \sum_{t=1}^{s+1}\mu_t$ that appears in the $(s+1)$th column. Thus
  there exist adjacent pairs \(\begin{array}{|c|c|}
    \cline{1-2} i_1 & u' \\ \cline{1-2} \end{array}
\) and \(\begin{array}{|c|c|}
    \cline{1-2} i_2 & u \\ \cline{1-2} \end{array}
\) with the properties that 
\begin{itemize}
\item $i_1 \leq \sum_{t=1}^s \mu_t$ and
\item $i_2 > \sum_{t=1}^s \mu_t$
\end{itemize}
 since $u'$ appears in the
  $(s+1)$th column and all entries in the $s$th column are less than
  or equal to $\sum_{t=1}^s \mu_t$ by assumption, and since $u$ appears in a column
  strictly to the right of the $(s+1)$th column. 
Thus $i_1 < i_2$ but $u'>u$, which contradicts condition (b). The
result follows. 
\end{proof}

\begin{proof}[Proof of Theorem~\ref{theorem:N_T and algorithm}]
 By Lemma~\ref{lemma:N_T equivalence} it suffices to prove that a
  filling $T$ satisfies conditions (a) and (b) of Lemma~\ref{lemma:N_T
    equivalence} if and only if it arises from~\eqref{eq:def
    algorithm}. So suppose $T$ satisfies Lemma~\ref{lemma:N_T
    equivalence}(a) and (b). From Lemma~\ref{lemma:column fillings} we
  already know that the set of entries in each column agrees with that
  specified by~\eqref{eq:def algorithm}, so it remains to show that
  the ordering of the entries also agrees, i.e. that the entries of
  the $s$-th column for $2 \leq s \leq r$ respects the linear order
  imposed on the rows by the filling of the leftmost column. We argue
  by induction. Suppose $s=2$. Then the entries of the $2$nd column
  respect the ordering in the $1$st column precisely when the following
  holds: if \(\begin{array}{|c|c|}
    \cline{1-2} i_1 & j_1 \\ \cline{1-2} \end{array}
\) and \(\begin{array}{|c|c|}
    \cline{1-2} i_2 & j_2 \\ \cline{1-2} \end{array}
\) are two adjacent pairs with $j_1, j_2$ in the $2$nd column of
$\lambda$ then $i_1<i_2$ if and only if $j_1 < j_2$. But this follows from
condition (b). Moreover if this
condition holds it follows that the linear ordering of the boxes in
the $2$nd column given by its filling by
$\{\mu_1+1,\ldots,\mu_1+\mu_2\}$ agrees with that induced by the
linear ordering of the rows of $\lambda$ corresponding to the filling of the $1$st
column. Assuming the first $s$ columns are obtained by~\eqref{eq:def
  algorithm}, the same argument as above shows that the $(s+1)$st
column must also be filled according to~\eqref{eq:def algorithm}, as
desired.

Conversely, suppose $T$ is obtained from~\eqref{eq:def algorithm}.  By
construction $T$ satisfies condition (a). Now suppose
\(\begin{array}{|c|c|} \cline{1-2} i_1 & j_1 \\
  \cline{1-2} \end{array} \) and \(\begin{array}{|c|c|} \cline{1-2}
  i_2 & j_2 \\ \cline{1-2} \end{array} \) are two adjacent pairs
appearing in $T$. 
We consider cases. Suppose $i_1$ and $i_2$ appear in
the $s$th and $s'$th columns of $T$. Without loss of generality we may
assume $s<s'$. Then $i_1 \leq \sum_{t=1}^s \mu_t$ while $i_2 \geq
(\sum_{t=1}^{s'-1} \mu_t)+1 \geq i_1$. Thus we wish to show $j_1 <
j_2$. This follows because the adjacency with $i_1$ and $i_2$
respectively implies that $j_1$ is in the $(s+1)$th column and $j_2$
is in the $(s'+1)$th column. Since $s+1<s'+1$ an argument similar to
that above implies $j_1 < j_2$ as desired. On the other hand suppose
$i_1$ and $i_2$ appear in the same column, say the $s$th. Then $j_1$
and $j_2$ appear in the $s+1$th column. 
Suppose further that $i_1$
appears in the $r_{i_1}$th row and $i_2$ appears in the $r_{i_2}$th
row. If $i_1<i_2$ then by definition of the algorithm~\eqref{eq:def algorithm}
the entry in the $r_{i_1}$th row of the first column is
less than that in the $r_{i_2}$th row, which in turn implies $j_1<j_2$.
Similarly $j_1<j_2$ implies $i_1<i_2$. This concludes the
proof.

\end{proof}

The following, asserted in
\cite[Section
4, see e.g. Figure 4]{Tym06}, is now a straightforward consequence.

\begin{corollary}\label{corollary:tymoczko}
  Let $\lambda$ be a Young diagram with $n$ boxes and $T_{RE}$ be the
  rotated English filling of $\lambda$. 
  Let $\sigma := \phi_\lambda(T_{RE})$ be the permutation given by the
  English reading of $T_{RE}$. Then $N_{T_{RE}} = \sigma N
  \sigma^{-1}$ is in highest form. 
\end{corollary}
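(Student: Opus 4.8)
The plan is simply to assemble results already established; the statement is a direct corollary of Theorem~\ref{theorem:N_T and algorithm} together with Remark~\ref{remark:T rotated english}, so no new argument is required.

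First I would record the equality $N_{T_{RE}} = \sigma N \sigma^{-1}$. Since $\sigma = \phi_\lambda(T_{RE})$ by hypothesis, this is an immediate instance of Theorem~\ref{adjacency} applied to the filling $T = T_{RE}$ of $\lambda = \lambda_N$.

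Next I would identify $T_{RE}$ as the output of the algorithm~\eqref{eq:def algorithm}. By Definition~\ref{our filling} the rotated English filling is the filling whose rotated English reading (bottom to top along columns, leftmost column first) is the identity permutation; in particular its leftmost column, read from bottom to top, is $1, 2, \ldots, \mu_1$, i.e. the integer $i$ sits in the $i$-th box from the bottom of the first column. By Remark~\ref{remark:T rotated english}, feeding exactly this leftmost-column filling into the algorithm~\eqref{eq:def algorithm} reproduces $T_{RE}$. (If one prefers a self-contained check rather than citing the remark, one verifies directly that placing the integers $\{(\sum_{t=1}^{s-1}\mu_t)+1, \ldots, \sum_{t=1}^s \mu_t\}$ down the $s$-th column in the row order induced by the first column yields precisely the filling whose rotated English reading is the identity.)

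Finally, since $T_{RE}$ arises from applying the algorithm~\eqref{eq:def algorithm} to the filling of its own leftmost column, Theorem~\ref{theorem:N_T and algorithm} gives that the adjacent-pair matrix $N_{T_{RE}}$ is in highest form, which completes the proof. There is no genuine obstacle here: the substantive work was done in proving Theorem~\ref{theorem:N_T and algorithm}, and all that remains is to match the rotated English filling against the special initial column of the algorithm, which Remark~\ref{remark:T rotated english} already carries out.
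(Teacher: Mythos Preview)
Your proof is correct and follows exactly the same route as the paper's own proof, which simply cites Theorems~\ref{adjacency} and~\ref{theorem:N_T and algorithm} together with Remark~\ref{remark:T rotated english}. You have merely expanded the one-line justification into its constituent steps.
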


\begin{proof} 
Immediate from
Theorems~\ref{adjacency} and~\ref{theorem:N_T and algorithm} and Remark~\ref{remark:T
  rotated english}. 
\end{proof}

We have just seen that each filling $T$ obtained from~\eqref{eq:def
  algorithm} yields a conjugate $N_T = \sigma N \sigma^{-1}$ of $N$ in
highest form. Since a filling given in~\eqref{eq:def algorithm} is
specified by the filling of its leftmost column, there are $\mu_1 ! =
\lvert S_{\mu_1} \rvert$ many such fillings. However, different
such fillings $T$ and $T'$ may yield the same adjacent-pair matrix
$N_T = N_{T'}$. The next lemma makes this precise, for the purpose of which we use
the following terminology. We say a filling $T'$ is obtained from $T$
by a \textbf{row swap} if the entries of $2$ equal-length rows of $T$
have been interchanged; more precisely, if both the $a$th row and the
$b$th row of $\lambda$ have $d$ boxes and entries 
\(\begin{array}{|c|c|c|}
    \cline{1-3} a_1 & \cdots & a_d \\ \cline{1-3} \end{array}
\)
and
\(\begin{array}{|c|c|c|}
    \cline{1-3} b_1 & \cdots & b_d \\ \cline{1-3} \end{array}
\)
respectively, then $T'$ is obtained from $T$ by swapping the $a$th
row and $b$th row if $T'$ contains the same entries as in $T$ in all
other rows, and the $a$th row of $T'$ has entries 
\(\begin{array}{|c|c|c|}
    \cline{1-3} b_1 & \cdots & b_d \\ \cline{1-3} \end{array}
\)
and the $b$th row has entries
\(\begin{array}{|c|c|c|}
    \cline{1-3} a_1 & \cdots & a_d \\ \cline{1-3} \end{array}.
\)

\begin{lemma}\label{lemma:row swap}
Let $\lambda$ be a Young diagram with $n$ boxes and let $T$ and $T'$ be fillings of $\lambda$ obtained from~\eqref{eq:def
  algorithm}. Then $N_T = N_{T'}$ if and only if $T'$ is obtained from
$T$ by a sequence of row swaps.   
\end{lemma}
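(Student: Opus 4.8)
The plan is to characterize the adjacent-pair matrix $N_T$ in terms of the adjacent pairs of $T$, and then observe that $N_T$ depends only on the \emph{set} of adjacent pairs, viewed as pairs of integers, together with which integer of each pair is on the left. Concretely, by the very definition of the adjacent-pair matrix, $N_T = N_{T'}$ if and only if $T$ and $T'$ have exactly the same collection of ordered adjacent pairs $(i,j)$ (with $i$ to the left of $j$). So the lemma reduces to the purely combinatorial claim: for fillings $T, T'$ both arising from the algorithm~\eqref{eq:def algorithm}, the collections of ordered adjacent pairs coincide if and only if $T'$ is obtained from $T$ by a sequence of row swaps.

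The reverse direction is the easy one: a row swap interchanges two equal-length rows wholesale, so it maps the adjacent pair $\begin{array}{|c|c|}\cline{1-2} a_s & a_{s+1} \\ \cline{1-2}\end{array}$ in row $a$ and $\begin{array}{|c|c|}\cline{1-2} b_s & b_{s+1} \\ \cline{1-2}\end{array}$ in row $b$ to the same two pairs (just located in the opposite rows), and fixes all other adjacent pairs; hence it preserves the collection of ordered adjacent pairs and therefore $N_T = N_{T'}$. A sequence of row swaps does the same. Note this direction does not even need the hypothesis that $T,T'$ come from the algorithm.

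For the forward direction, suppose $N_T = N_{T'}$, i.e. $T$ and $T'$ have the same ordered adjacent pairs. Since both $T$ and $T'$ arise from~\eqref{eq:def algorithm}, by Lemma~\ref{lemma:column fillings} (or directly by the algorithm) the $s$-th column of each contains exactly the integers $\{(\sum_{t=1}^{s-1}\mu_t)+1,\ldots,\sum_{t=1}^{s}\mu_t\}$; in particular the leftmost column of each is filled with $[\mu_1]$, just in possibly different orders. The filling of the leftmost column of $T$ determines a linear ordering of the rows, and by the algorithm the entire rest of $T$ is then forced; similarly for $T'$. So it suffices to show that the leftmost column of $T'$ is obtained from that of $T$ by permuting the entries among boxes lying in equal-length rows only — because such a permutation of leftmost-column entries is exactly realized by a sequence of row swaps of equal-length rows, and by the algorithm these row swaps then propagate identically to all other columns. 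To prove this, take a box in row $a$ of the leftmost column of $T$ containing integer $p$, lying in a row of length $\lambda$, and let $q$ be the integer in the same box position of $T'$. The integer $p$ in $T$ starts a chain of adjacent pairs $\begin{array}{|c|c|}\cline{1-2} p & j_2 \\ \cline{1-2}\end{array}$, $\begin{array}{|c|c|}\cline{1-2} j_2 & j_3 \\ \cline{1-2}\end{array}$, \ldots running across row $a$ of $T$, and because $T$ comes from the algorithm the entries $j_2,\ldots,j_\lambda$ of that row are determined by $p$'s row-position and the row length. The equality of ordered adjacent-pair collections forces these very same chained pairs to appear in $T'$; since in $T'$ the unique box whose entry starts such a maximal left-to-right chain is again a leftmost-column box, $p$ must occupy a leftmost-column box of $T'$ in a row of the same length $\lambda$ (equivalently, the row-lengths of the rows of $T$ and $T'$ containing $p$ agree). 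Running this argument over all $p\in[\mu_1]$ shows the leftmost columns of $T$ and $T'$ differ only by a permutation that preserves the length of the row each entry lies in, which is precisely a product of row swaps of equal-length rows. Applying the corresponding row swaps to $T$ and invoking the algorithm to fill out the remaining columns yields $T'$, completing the proof.

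The main obstacle is the forward direction, and specifically making rigorous the claim that "$p$ starts the same maximal left-to-right chain of adjacent pairs in $T$ and in $T'$, and the entry starting such a chain must sit in the leftmost column." This needs the observation that an integer $x$ appears as the \emph{left} member of an adjacent pair but never as the \emph{right} member precisely when $x$ is in the leftmost column (the same dichotomy used in the proof of Lemma~\ref{lemma:N_T equivalence}), so "starts a maximal chain" is detected purely by the set of ordered adjacent pairs and hence is shared by $T$ and $T'$; combined with the algorithm's rigidity (the chain starting at a leftmost-column entry is completely determined by that entry's row and the row's length, which here is visibly forced to agree) this pins down the leftmost column up to equal-length-row permutations. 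Everything else is bookkeeping with the explicit form of the algorithm.
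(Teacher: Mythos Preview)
Your proof is correct. The reverse direction matches the paper's. For the forward direction the paper argues by contrapositive: assuming $T$ and $T'$ are not related by row swaps, it finds some $s \in [\mu_1]$ lying in rows of different lengths $d < d'$ in $T$ and $T'$, and then by a short case analysis on where the two rows containing $s$ first disagree it exhibits an adjacent pair present in one filling but not the other. Your direct chain argument is a bit more streamlined: following the maximal chain of adjacent pairs emanating from each leftmost-column entry $p$ recovers the entire row containing $p$, and since these chains are determined by the set of ordered adjacent pairs alone, the multiset of rows (as ordered tuples) of $T$ and of $T'$ must coincide. In fact this shows slightly more than you extract from it: the chain argument does not actually need the hypothesis that $T$ and $T'$ come from the algorithm, and it already pins down the rows themselves (not just their lengths), so your final appeal to the algorithm to ``fill out the remaining columns'' is unnecessary --- once the rows agree as tuples, $T'$ is visibly a row permutation of $T$ preserving row lengths.
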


\begin{proof}
  If $T$ and $T'$ differ only by a sequence of row swaps, then $T$ and
  $T'$ have precisely the same sets of adjacent pairs. Thus from the
  definition of the adjacent-pair matrix it follows that $N_T =
  N_{T'}$. Now suppose $T$ and $T'$ differ by more than a sequence of
  row swaps. Since both $T$ and $T'$ are obtained from~\eqref{eq:def
    algorithm}, this means that there exists an element $s \in
  [\mu_1]$ which appears in $T$ in a row of length $d$ and 
  appears in $T'$ in a row of length $d'$, with $d \neq d'$. Without
  loss of generality we assume $d' > d$. We wish to show that $N_T
  \neq N_{T'}$. For this it suffices to show that there exists some adjacent
  pair 
\(\begin{array}{|c|c|}
    \cline{1-2} i & j \\ \cline{1-2} \end{array}
\) which occurs in $T$ but not in $T'$, or vice versa. Consider the entries
in the row of $T$ and $T'$ which contain $s$. By assumption these are
of the form 
\(\begin{array}{|c|c|c|c|}
    \cline{1-4} a_1=s & a_2 & \cdots & a_d \\ \cline{1-4} \end{array}
\)
and
\(\begin{array}{|c|c|c|c|c|c|}
    \cline{1-6} a'_1=s & a'_2 & \cdots & a'_d  & \cdots & a'_{d'}\\ \cline{1-6} \end{array}
\)
respectively where $d'>d$. We take cases. Suppose there exists an
index $1 < i \leq d$ for which $a_i \neq a'_i$. Then in particular
there exists a minimal such, denote it $i$. Then there is an adjacent
pair \(\begin{array}{|c|c|}
    \cline{1-2} a_{i-1} & a_i \\ \cline{1-2} \end{array}
\) in $T$ and a pair 
\(\begin{array}{|c|c|}
    \cline{1-2} a'_{i-1}  & a'_i \\ \cline{1-2} \end{array}
\) in $T'$ where $a_{i-1} = a'_{i-1}$ but $a_i \neq a'_i$, so $N_T
\neq N_{T'}$. Now suppose $a_i = a'_i$ for all $1 \leq i \leq d$. In
particular $a_d = a'_d$. Then \(\begin{array}{|c|c|}
    \cline{1-2} a'_{d} = a_d  & a'_{d+1} \\ \cline{1-2} \end{array}
\) is an adjacent pair in $T'$ which does not occur in $T$. Hence $N_T
\neq N_{T'}$ also in this case. The result follows. 
\end{proof}

The following is now straightforward. Recall $\mu_1 = \ell$ is the
total number of rows of $\lambda$ and $d_1, \ldots, d_k$ are the numbers of
rows of $\lambda$ of length $\lambda_1, \ldots, \lambda_k$
respectively.

\begin{corollary}\label{corollary:number highest forms}
  There exist precisely
\[
\frac{\ell!}{d_1! d_2! \cdots d_k!}
\]
highest forms of $N$ obtained as $\sigma N \sigma^{-1}$ for a
permutation matrix $\sigma \in S_n$. 
\end{corollary}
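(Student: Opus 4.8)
The plan is to combine the three preceding results---Theorem~\ref{theorem:N_T and algorithm}, Lemma~\ref{lemma:row swap}, and the observation just above the corollary that fillings arising from~\eqref{eq:def algorithm} are in bijection with fillings of the leftmost column---into a straightforward counting argument. First I would note that by Theorem~\ref{adjacency} every conjugate $\sigma N \sigma^{-1}$ with $\sigma$ a permutation matrix is the adjacent-pair matrix $N_T$ for $T = \phi_\lambda^{-1}(\sigma)$, and conversely; hence the set of highest forms of $N$ obtained by permutation conjugation is exactly the set of \emph{distinct} matrices among $\{N_T : T \text{ a filling of }\lambda \text{ with } N_T \text{ in highest form}\}$. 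By Theorem~\ref{theorem:N_T and algorithm}, the fillings $T$ with $N_T$ in highest form are precisely those produced by the algorithm~\eqref{eq:def algorithm}, and as observed there, such a filling is completely determined by the filling of its leftmost column with the alphabet $[\mu_1] = [\ell]$; there are therefore exactly $\ell!$ such fillings $T$.

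Next I would invoke Lemma~\ref{lemma:row swap}: among these $\ell!$ fillings, two of them $T, T'$ give the same adjacent-pair matrix $N_T = N_{T'}$ if and only if $T'$ is obtained from $T$ by a sequence of row swaps of equal-length rows. Being related by row swaps is an equivalence relation on the set of $\ell!$ fillings (reflexive, symmetric, transitive---each is immediate from the definition of a row swap), and the number of distinct highest forms equals the number of equivalence classes. I would then count the size of each class: given a filling $T$ arising from the algorithm, permuting the $d_i$ rows of length $\lambda_i$ among themselves (for each $i = 1, \ldots, k$) yields all fillings in the row-swap class of $T$, and distinct such permutations give distinct fillings since the rows have distinct entry-sets. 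Hence every equivalence class has exactly $d_1! d_2! \cdots d_k!$ elements.

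Finally, since all classes have the same cardinality $d_1! \cdots d_k!$ and the total number of fillings is $\ell!$, the number of classes---equivalently the number of distinct highest forms---is $\ell! / (d_1! d_2! \cdots d_k!)$, which is the claimed count. The only mild subtlety, and the one point I would state carefully rather than wave at, is the claim that the row-swap orbit of a single algorithm-filling $T$ has size exactly $d_1! \cdots d_k!$ with no further collisions: this needs the remark that within a fixed class the $k$ groups of equal-length rows can be permuted independently, that row swaps never mix rows of different lengths, and that since each row of $T$ contains a distinct set of integers, two different choices of row-permutations produce genuinely different fillings. Granting Lemma~\ref{lemma:row swap} this is essentially bookkeeping, so I do not anticipate any real obstacle.
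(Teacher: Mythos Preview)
Your proposal is correct and follows essentially the same approach as the paper: count the $\ell!$ fillings arising from the algorithm~\eqref{eq:def algorithm}, invoke Lemma~\ref{lemma:row swap} to identify when two give the same $N_T$, and divide by the size $d_1!\cdots d_k!$ of each row-swap equivalence class. The paper's version is simply terser, leaving the bookkeeping about orbit sizes implicit.
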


\begin{proof}
  There are $\mu_1! = \ell!$ fillings $T$ 
  arising from the algorithm~\eqref{eq:def algorithm}. From
  Lemma~\ref{lemma:row swap} we know that the matrices $N_T$ do not
  change precisely when the entries in the first column contained in equal-length
  rows are permuted. The $d_i$ count the numbers of equal-length rows
  so the result follows. 
\end{proof}

Our constructions allow us to do explicit 
computations.
For instance, given the discussion above it is straightforward to list the permutation
matrices $\sigma$ for which the associated conjugate $\sigma N
\sigma^{-1}$ is in highest form. 
For instance, let $T_{RE}$ be the rotated
English filling of $\lambda$. It follows from the results above
that the permutation $\sigma$ for which $\sigma N \sigma^{-1}$ is the
choice of highest form of $N$ used in \cite[Section
4]{Tym06} is precisely $\sigma := \phi_\lambda(T)$.

\begin{example}\label{example:compute sigma}
Suppose the Young diagram is 
\[
{\def\lr#1{\multicolumn{1}{|@{\hspace{.6ex}}c@{\hspace{.6ex}}|}{\raisebox{-.3ex}{$#1$}}}
\raisebox{-.6ex}{$\begin{array}[b]{ccc}
\cline{1-1}\cline{2-2}\cline{3-3}
\lr{\phantom{1}}&\lr{\phantom{1}}&\lr{\phantom{1}}\\
\cline{1-1}\cline{2-2}\cline{3-3}
\lr{\phantom{1}}&\lr{\phantom{1}}\\
\cline{1-1}\cline{2-2}
\lr{\phantom{1}}\\
\cline{1-1}
\end{array}$}
}
\]
corresponding to the nilpotent matrix $N$ in Example~\ref{example: yng(3,2,1)}.
Then the rotated English filling is 
\[
{\def\lr#1{\multicolumn{1}{|@{\hspace{.6ex}}c@{\hspace{.6ex}}|}{\raisebox{-.3ex}{$#1$}}}
\raisebox{-.6ex}{$\begin{array}[b]{ccc}
\cline{1-1}\cline{2-2}\cline{3-3}
\lr{3}&\lr{5}&\lr{6}\\
\cline{1-1}\cline{2-2}\cline{3-3}
\lr{2}&\lr{4}\\
\cline{1-1}\cline{2-2}
\lr{1}\\
\cline{1-1}
\end{array}$}
}
\]
and the permutation $\sigma$ 
such that $\sigma N \sigma^{-1}$ is in the highest form used in
\cite[Section 4]{Tym06} is $\sigma = 356241$. 
\end{example}

As another application of our discussion and for use in Section~\ref{sec:pinball}
we close this section with a brief discussion about the circle action
on Hessenberg varieties defined in~\eqref{eq:def S^1 for Springer}. 
Consider the translated Hessenberg variety $\Hess(\sigma N
  \sigma^{-1}, h)$ where $N$ is in standard Jordan canonical form and
$\sigma$ is a permutation matrix. In this case the circle subgroup
of~\eqref{eq:def S^1 for Springer} does not necessarily act on
$\Hess(\sigma N \sigma^{-1},h)$. Instead we consider the
conjugated circle subgroup $\sigma S^1 \sigma^{-1}$ of $T$, which is
easily seen to preserve $\Hess(\sigma N \sigma^{-1},h)$. Here and
below we consider each such Springer variety to be equipped with this
conjugated circle group action, which by slight abuse of notation we
sometimes denote also by $S^1$ (instead of $\sigma S^1 \sigma^{-1}$). It is
immediate that the fixed points $\Hess(\sigma N
  \sigma^{-1},h)^{S^1}$ under the $S^1$-action are isolated and are a
subset of $S_n \cong \Flags(\C^n)^{T}$; indeed, under the
homeomorphism~\eqref{eq:SN to SgNg^{-1}} the 
set of $S^1$-fixed points
$\Hess(\sigma N \sigma^{-1},h)^{S^1}$ is precisely the
$\sigma$-translate \[\sigma \cdot \Hess(N,h)^{S^1} \subseteq S_n\] of
the $S^1$-fixed points of $\Hess(N,h)$.

In Section~\ref{sec:pinball} we focus attention on a
choice of Springer variety $\mathcal{S}_{\sigma N \sigma^{-1}}$
specified by $\lambda = (n-2,2)$ with nilpotent matrix $N$ and the
choice of permutation $\sigma$ 
determined by the rotated English filling.
In this setting 
we give below an explicit computation of the conjugate circle subgroup
$\sigma S^1 \sigma^{-1}$ and also the associated linear projection
$\Lie(T)^* \to \Lie(S^1)^*$. We illustrate with a concrete example.

\begin{example}
  Let $\lambda = (4,2)$. Then the corresponding matrix
  in standard Jordan canonical form is 
\[
N = \begin{bmatrix} 0 & 1 & 0 & 0 & 0 & 0 \\
                                0 & 0 & 1 & 0 & 0 & 0 \\
                                0 & 0 & 0 & 1 & 0 & 0 \\
                                0 & 0 & 0 & 0 & 0 & 0 \\
                                0 & 0 & 0 & 0  & 0 & 1 \\
                                0 & 0 & 0 & 0 & 0 & 0 
     \end{bmatrix} 
\]
and the associated 
permutation determined from the rotated English filling is $\sigma = 245613$. 
The standard $S^1$ in~\eqref{eq:def S^1 for Springer} is then 
conjugated to the circle subgroup 
\[
S^1 \cong \sigma S^1 \sigma^{-1} = \left\{ 
  \begin{bmatrix}
    t^2 & 0 & 0 & 0 & 0 & 0 \\
                                0 & t^6 & 0 & 0 & 0 & 0 \\
                                0 & 0 & t & 0 & 0 & 0 \\
                                0 & 0 & 0 & t^5 & 0 & 0 \\
                                0 & 0 & 0 & 0  & t^4 & 0 \\
                                0 & 0 & 0 & 0 & 0 & t^3 
     \end{bmatrix} 
\right\}.
\]
The corresponding 
linear projection $\Lie(T^6)^* = \t^* \to \Lie(S^1)^*$ induced
by the inclusion $S^1 \cong \sigma S^1 \sigma^{-1} \into T^6$ is given
by 
\begin{equation}\label{eq:new projection to S1} 
t_1 \mapsto  2t, \quad 
t_2 \mapsto  6t, \quad
t_3 \mapsto t, \quad
t_4 \mapsto 5t, \quad 
t_5 \mapsto  4t, \quad
t_6 \mapsto  3t.
\end{equation}
where $t$ denotes the variable in $\Lie(S^1)$ and the $t_i$ the
variables in $\Lie(T^6) \cong \R^6$. 
\end{example}

The general computation follows. 

\begin{lemma}\label{lemma:conjugated circle}
Let $n \geq 4$. Let $\lambda = (n-2,2)$ and let
$S^1$ denote the standard circle subgroup in~\eqref{eq:def S^1 for
  Springer}. 
Then the permutation $\sigma$ determined by the rotated English filling of
$\lambda$ is 
\[
\sigma = 2\, 4\, 5\, 6\, 7 \cdots n-1 \, n \hsm 1 \, 3
\]
in one-line notation and 
the conjugated subgroup $\sigma S^1 \sigma^{-1}$ is given by 
\begin{equation}\label{eq:conjugated circle} 
S^1 \cong \sigma S^1 \sigma^{-1} = \left\{
\begin{bmatrix}
t^2 & 0 & 0 & 0 & 0 & \cdots & 0 \\
 & t^{n} &  & & & & \\
 &   & t & & & & \\
 &   &   & t^{n-1} & & & \\
 &   &   &           & t^{n-2} & & \\
  &   &  &           &      & \ddots & \\
 & & & & & & t^3 
\end{bmatrix} \right\} \subseteq T.
\end{equation}
Moreover, the linear projection $\t^* \to \Lie(S^1)^*$ determined by the
inclusion of this circle subgroup $S^1 \cong \sigma S^1 \sigma^{-1} \into T$
is given by 
\begin{equation}\label{eq: projection to new torus (n,2) case}
t_1 \mapsto  2t, \quad 
t_2 \mapsto  nt, \quad 
t_3 \mapsto  t, \quad \textup{ and } 
t_k \mapsto (n+3-k) t,  \textup{ for } 4 \leq k \leq n.
\end{equation}
\end{lemma}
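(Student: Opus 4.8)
The plan is to establish the three assertions in turn: first identify the rotated English filling of $\lambda = (n-2,2)$ and read off $\sigma$ from its English word, then conjugate the standard circle of~\eqref{eq:def S^1 for Springer} entrywise by the permutation matrix $\sigma$, and finally differentiate the resulting one-parameter subgroup and dualize.

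\emph{Step 1: the permutation $\sigma$.} I would first record the combinatorial data of $\lambda = (n-2,2)$: it has $\ell = \mu_1 = 2$ rows, and (since $n \geq 4$) its column lengths are $\mu_1 = \mu_2 = 2$ and $\mu_s = 1$ for $3 \leq s \leq n-2$, with $r = n-2$ columns. By Remark~\ref{remark:T rotated english}, the rotated English filling $T_{RE}$ of Definition~\ref{our filling} is obtained by running the algorithm~\eqref{eq:def algorithm} on the filling of the leftmost column that places $i$ in the $i$th box from the bottom; here this puts $1$ in the second (bottom) row and $2$ in the first (top) row, inducing the linear order ``row $2$, then row $1$'' on the rows. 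The algorithm then fills the second column (length $2$) with $\{3,4\}$ in that order, so $3$ lands in row $2$ and $4$ in row $1$; and for each $3 \leq s \leq n-2$ the $s$th column (length $1$) receives the single integer $s+2$, which lands in row $1$. Hence the first row of $T_{RE}$ reads $2,4,5,6,\ldots,n$ and the second row reads $1,3$. By Definition~\ref{english}, $\sigma = \phi_\lambda(T_{RE})$ is the permutation whose one-line notation is the English word of $T_{RE}$, namely the concatenation of its rows from top to bottom, which is $2\,4\,5\,6\cdots n\,1\,3$. (This also makes explicit the rotated English filling underlying Corollary~\ref{corollary:tymoczko}.)

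\emph{Step 2: the conjugated circle.} Using the convention fixed in the paper that $\sigma$ as a matrix has $i$th column $e_{\sigma(i)}$, conjugation of a diagonal matrix $\mathrm{diag}(d_1,\ldots,d_n)$ by $\sigma$ yields the diagonal matrix whose $(j,j)$ entry is $d_{\sigma^{-1}(j)}$. The standard circle of~\eqref{eq:def S^1 for Springer} has $(i,i)$ entry $t^{\,n+1-i}$, so the $(j,j)$ entry of $\sigma S^1 \sigma^{-1}$ is $t^{\,n+1-\sigma^{-1}(j)}$. From the one-line notation of $\sigma$ I would read off $\sigma^{-1}(2) = 1$, $\sigma^{-1}(1) = n-1$, $\sigma^{-1}(3) = n$, and $\sigma^{-1}(k) = k-2$ for $4 \leq k \leq n$; substituting gives the diagonal entries $t^2, t^n, t, t^{n-1}, t^{n-2}, \ldots, t^3$ in positions $1,2,3,4,5,\ldots,n$, which is exactly~\eqref{eq:conjugated circle}.

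\emph{Step 3: the projection.} Differentiating the one-parameter subgroup $t \mapsto \sigma S^1 \sigma^{-1}$ at the identity gives the Lie algebra embedding $\Lie(S^1) \hookrightarrow \t$ carrying the generator to the vector whose $i$th coordinate is the exponent of $t$ in the $(i,i)$ entry found in Step 2; dualizing, the induced map $\t^* \to \Lie(S^1)^*$ sends $t_i$ to that exponent times $t$. Reading off the exponents yields $t_1 \mapsto 2t$, $t_2 \mapsto nt$, $t_3 \mapsto t$, and $t_k \mapsto (n+3-k)t$ for $4 \leq k \leq n$, which is~\eqref{eq: projection to new torus (n,2) case}. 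The argument is entirely elementary, so there is no genuine obstacle; the only points demanding care are the index bookkeeping in Step 2 --- in particular remembering that conjugation permutes diagonal entries by $\sigma^{-1}$, not $\sigma$ --- and checking the degenerate case $n=4$, where the range $3 \leq s \leq n-2$ is empty so that $T_{RE}$ has no length-one columns and $\sigma$ reduces to $2\,4\,1\,3$.
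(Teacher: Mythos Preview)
Your proposal is correct and follows essentially the same approach as the paper's proof: identify the rotated English filling of $(n-2,2)$, read off $\sigma$ via the English word, compute $\sigma^{-1}$, and then conjugate to obtain the circle and the projection. The paper's version simply displays the filling and $\sigma^{-1}$ and then says ``the result follows by computation,'' whereas you have spelled out that computation (including the conjugation rule $d_j \mapsto d_{\sigma^{-1}(j)}$ and the dualization in Step~3) in full; there is no substantive difference in method.
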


\begin{proof}[Proof of Lemma~\ref{lemma:conjugated circle}]
By definition $\lambda$ is the partition with $n-2$ boxes in the first
row and $2$ boxes in the second row. 
Its rotated English filling is 
$\myvcenter
{\def\lr#1{\multicolumn{1}{|@{\hspace{.6ex}}c@{\hspace{.6ex}}|}{\raisebox{-.3ex}{$#1$}}}
\raisebox{-.6ex}{$\begin{array}[b]{ccccccc}
\cline{1-1}\cline{2-2}\cline{3-3}\cline{4-4}\cline{5-5}\cline{6-6}\cline{7-7}
\lr{2}&\lr{4}&\lr{5}&\lr{\cdots}&\lr{n-2}&\lr{n-1}&\lr{n}\\
\cline{1-1}\cline{2-2}\cline{3-3}\cline{4-4}\cline{5-5}\cline{6-6}\cline{7-7}
\lr{1}&\lr{3}\\
\cline{1-1}\cline{2-2}
\end{array}$}
}
$
from which the form of $\sigma$ (obtained by the English reading of the
above filling) follows. 
Moreover the inverse of the given $\sigma$ is 
$\sigma^{-1} = n-1 \hsm 1 \hsm n \hsm 2 \hsm 3 \hsm 4
\cdots n-2$. The result follows by computation.
\end{proof}

\section{$S^1$-fixed points in Hessenberg varieties and permissible
  fillings}\label{sec:S1 action}

In this section we give an explicit bijection from the $S^1$-fixed points
of $\Hess(N,h)$, for various choices of $N$, to the set of
\textbf{permissible fillings} of $\lambda_N$. The last result of the
section, Corollary~\ref{corollary:fixed points and rotated english},
is used in Sections~\ref{sec:betti}-\ref{sec:pinball} but the
discussion is also of independent interest. Our results further
develop some ideas in \cite{Tym06}, in which Tymoczko constructs a
paving-by-affines of a nilpotent Hessenberg variety $\Hess(N,h)$ by
using certain Schubert cells. (In \cite{Tym06} Tymoczko considers more general
Hessenberg varieties but we focus on the nilpotent case here.) Since
each Schubert cell $BwB$ in $GL(n,\C)$ contains a unique coset $wB$
with $w$ a permutation matrix, it follows from her construction that
there is a unique such $w$ associated to each of the affine cells in
her paving of $\Hess(N,h)$, which in turn can be encoded in a filling
of a Young diagram \cite[Theorem 7.1]{Tym06}. 
Our main result in this
section, 
Theorem~\ref{theorem: fixed points and permissible fillings}, 
is another interpretation of this bijection; our main contribution is
to make
more explicit and precise the bijective correspondence between
the 
permissible fillings of $\lambda_N$ and the cosets $wB$ for $w$ 
a permutation matrix which lie in $\Hess(\sigma N \sigma^{-1}, h)$
(thought of as $S^1$-fixed points of $\Hess(\sigma N \sigma^{-1},h)$)
for
different choices of conjugates $\sigma N \sigma^{-1}$. 
We also refer the reader to \cite{BayHar10b}
for related discussion; in particular, Corollary~\ref{corollary:fixed
  points and rotated english} proves a claim used in \cite[Section
2]{BayHar10b}.

We begin by defining
permissible fillings following
\cite{Mbirika:2010}.

\begin{definition}\label{definition:permissible filling}
  Let $\lambda$ be a Young diagram with $n$ boxes and $h:
  \{1,2,\ldots, n\} \to \{1,2,\ldots,n\}$ a Hessenberg function.  A
  filling of $\lambda$ is a \textbf{$(h,\lambda)$-permissible filling}
  if for every
  horizontal adjacency \(\begin{array}{|c|c|} \cline{1-2} k & j \\
    \cline{1-2} \end{array}\) we have \(k \leq h(j).\) (When the $h$
  and $\lambda$ are understood from context we sometimes omit the
  $(h,\lambda)$ from terminology and refer simply to
  \textbf{permissible fillings}.) 
\end{definition}

\begin{remark} 
In the context of Springer varieties, for which $h(j)=j$ for all $j$,
the condition $k \leq h(j)$ becomes $k \leq j$. Thus in this case permissible fillings are
precisely the \textbf{row-strict} fillings. 
\end{remark}

Given $\lambda$ and $h$, we denote by 
\[
\PFill(\lambda,h) \subseteq \mathcal{F}i \ell \ell(\lambda) 
\]
the set of permissible fillings of $\lambda$. 
Let $N$ be a nilpotent $n \times n$ matrix in Jordan canonical form
with corresponding Young diagram $\lambda$, 
and let $h: \{1,2,\ldots,n\} \to \{1,2,\ldots, n\}$ be a Hessenberg function. 
Our goal is to construct an explicit identification between Hessenberg
fixed points $\Hess(\sigma N \sigma^{-1}, h)$ and permissible fillings
$\PFill(\lambda,h)$ for any permutation matrix $\sigma$. 

As a first step we define an identification between $S_n$ and
$\Fill(\lambda)$ which depends on the choice of permutation
$\sigma$. Recall that $\phi_\lambda: \Fill(\lambda) \to S_n$ is the
mapping given by the English reading of a filling. 

\begin{definition}\label{definition:phi lambda sigma} 
  Let $\sigma$ be a permutation in $S_n$ and $\lambda$ a Young diagram
  with $n$ boxes. Consider the filling $\phi_\lambda^{-1}(\sigma)$ of
  $\lambda$ corresponding to $\sigma$ via the English reading. The
  filling $\phi_\lambda^{-1}(\sigma)$ specifies a linear ordering on
  the boxes of $\lambda$. Define the map 
\begin{equation}\label{eq:def phi lambda sigma} 
\phi_{\lambda,\sigma}: \Fill(\lambda) \to S_n
\end{equation}
by associating to any filling $T$ of $\lambda$ the permutation whose
one-line notation is the reading of the entries of $T$ with respect to the linear
ordering given by $\phi_\lambda^{-1}(\sigma)$. 
\end{definition}

\begin{example}
  Suppose $\lambda = (3,2,1)$ and $\sigma = 253416$. Then
  $\phi_\lambda^{-1}(\sigma)$ is the filling 
$
\myvcenter
{\def\lr#1{\multicolumn{1}{|@{\hspace{.6ex}}c@{\hspace{.6ex}}|}{\raisebox{-.3ex}{$#1$}}}
\raisebox{-.6ex}{$\begin{array}[b]{ccc}
\cline{1-1}\cline{2-2}\cline{3-3}
\lr{2}&\lr{5}&\lr{3}\\
\cline{1-1}\cline{2-2}\cline{3-3}
\lr{4}&\lr{1}\\
\cline{1-1}\cline{2-2}
\lr{6}\\
\cline{1-1}
\end{array}$}}
$
so for the filling 
$
\myvcenter
T = {\def\lr#1{\multicolumn{1}{|@{\hspace{.6ex}}c@{\hspace{.6ex}}|}{\raisebox{-.3ex}{$#1$}}}
\raisebox{-.6ex}{$\begin{array}[b]{ccc}
\cline{1-1}\cline{2-2}\cline{3-3}
\lr{4}&\lr{1}&\lr{6}\\
\cline{1-1}\cline{2-2}\cline{3-3}
\lr{2}&\lr{3}\\
\cline{1-1}\cline{2-2}
\lr{5}\\
\cline{1-1}
\end{array}$}
}
$
the reading $\phi_{\lambda, \sigma}(T)$ would yield $346215$. 
\end{example}

\begin{remark}\label{remark:english and rotated english}
  By definition the mapping $\phi_{\lambda, id}$ corresponding to
  $\sigma=id$ the identity permutation coincides with the map
  $\phi_\lambda$ obtained via the English reading. Similarly the
  permutation $\sigma$ for which $\phi_{\lambda,\sigma}(T)$ is the
  rotated English reading is precisely the permutation corresponding
  under $\phi_\lambda$ to the rotated English filling of $\lambda$. 
\end{remark}

Remark~\ref{remark:english and rotated english} shows that both the
English and the rotated English readings of $\Fill(\lambda)$ are
special cases of $\phi_{\lambda,\sigma}$. The point of
Definition~\ref{definition:phi lambda sigma} is to emphasize that
other choices, corresponding to different choices of translated Hessenberg
varieties, are possible. 
We need the following lemma. 

\begin{lemma}\label{lemma:phi lambda sigma} 
  Let $\lambda$ be a Young diagram with $n$ boxes and $\sigma, \tau
  \in S_n$. Then 
\[
\phi^{-1}_{\lambda, \sigma}(\tau) = \phi^{-1}_\lambda(\tau \sigma).
\]
\end{lemma}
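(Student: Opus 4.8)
The statement relates two labellings of fillings of $\lambda$: the English reading $\phi_\lambda$, and the $\sigma$-twisted reading $\phi_{\lambda,\sigma}$ which reads entries of a filling in the linear order that the filling $\phi_\lambda^{-1}(\sigma)$ imposes on the boxes. The plan is to track exactly what "read in the order specified by $\phi_\lambda^{-1}(\sigma)$" means and compare it with the English reading. Concretely, let $b_1, b_2, \ldots, b_n$ be the boxes of $\lambda$ listed in English order (left to right along rows, top to bottom). Then $\phi_\lambda^{-1}(\sigma)$ is the filling that places $\sigma(k)$ into box $b_k$, so the linear ordering of boxes specified by this filling is: box $b_k$ comes before box $b_m$ iff $\sigma(k) < \sigma(m)$, i.e. the box-ordering is $b_{\sigma^{-1}(1)}, b_{\sigma^{-1}(2)}, \ldots, b_{\sigma^{-1}(n)}$.

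First I would unwind $\phi_{\lambda,\sigma}(T)$ for an arbitrary filling $T$: writing $T(b_k)$ for the entry of $T$ in box $b_k$, the one-line notation of $\phi_{\lambda,\sigma}(T)$ is obtained by reading the $T(b_j)$'s in the box-order just described, so its $i$-th entry is $T(b_{\sigma^{-1}(i)})$. That is, $\phi_{\lambda,\sigma}(T)(i) = T(b_{\sigma^{-1}(i)})$. On the other hand, the English reading satisfies $\phi_\lambda(T)(k) = T(b_k)$ by definition. Now I would set $\tau := \phi_{\lambda,\sigma}(T)$ and compute, for each $i$,
\[
(\tau \sigma)(\sigma^{-1}(i)) \;=\; \tau(i) \;=\; \phi_{\lambda,\sigma}(T)(i) \;=\; T(b_{\sigma^{-1}(i)}).
\]
Substituting $k = \sigma^{-1}(i)$ (so $i = \sigma(k)$ ranges over all of $[n]$ as $k$ does), this reads $(\tau\sigma)(k) = T(b_k) = \phi_\lambda(T)(k)$ for all $k$, hence $\tau\sigma = \phi_\lambda(T)$, i.e. $T = \phi_\lambda^{-1}(\tau\sigma)$. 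Since also $T = \phi_{\lambda,\sigma}^{-1}(\tau)$ by the choice of $\tau$, we conclude $\phi_{\lambda,\sigma}^{-1}(\tau) = \phi_\lambda^{-1}(\tau\sigma)$, as desired. (Both maps $\phi_\lambda$ and $\phi_{\lambda,\sigma}$ are bijections $\Fill(\lambda)\to S_n$ — the former by the remark after Definition~\ref{english}, the latter because permuting the reading order of a bijective filling is again a bijection — so the inverses make sense and the identity holds for every $\tau\in S_n$.)

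The only genuinely delicate point, and where I would be most careful, is getting the direction of composition right: one must verify that the box reached "$i$-th in the $\sigma$-order" is $b_{\sigma^{-1}(i)}$ and not $b_{\sigma(i)}$, which comes down to the observation that $\phi_\lambda^{-1}(\sigma)$ puts the \emph{value} $\sigma(k)$ in the $k$-th English box, so small values of $\sigma(k)$ — i.e. boxes read early in the $\sigma$-order — correspond to positions $k = \sigma^{-1}(\text{small})$. A clean way to guard against an off-by-composition error is to double-check the two boundary cases against Remark~\ref{remark:english and rotated english}: for $\sigma = \mathrm{id}$ the formula gives $\phi_{\lambda,\mathrm{id}}^{-1}(\tau) = \phi_\lambda^{-1}(\tau)$, consistent with $\phi_{\lambda,\mathrm{id}} = \phi_\lambda$; and one can verify the rotated-English case similarly. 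I would present the argument at the level of "$i$-th entry of the one-line notation equals $T$ of the appropriate box," since that makes the bookkeeping transparent and the composition order forced.
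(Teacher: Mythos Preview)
Your proof is correct and follows essentially the same approach as the paper's own proof, which is a one-sentence observation that right multiplication by $\sigma$ reorders the one-line notation of $\tau$ by replacing the $i$-th entry $\tau(i)$ with $\tau(\sigma(i))$. You have simply unpacked this observation carefully by introducing explicit box labels $b_1,\ldots,b_n$ and tracking the $i$-th entry of each reading, which makes the direction of composition transparent; the underlying idea is identical.
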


\begin{proof}
This follows from the definition of $\phi_{\lambda,\sigma}$ and the
fact that multiplication by $\sigma$ on the right re-orders the
entries in the one-line notation for $\tau$ precisely by replacing the
$i$-th entry $\tau(i)$ by $\tau(\sigma(i))$ for all $i$. 
\end{proof}

The main theorem of this section is the following. We
consider $\Hess(\sigma N \sigma^{-1}, h)^{S^1}$ to be a subset of
$S_n$ and $\PFill(\lambda)$ to be a subset of $\Fill(\lambda)$. 

\begin{theorem}\label{theorem: fixed points and permissible fillings}
  Let $N$ be an $n \times n$ nilpotent matrix in Jordan canonical form
  with weakly decreasing sizes of Jordan blocks 
  with respect to the standard basis of $\C^n$ and let $h:
  \{1,2,\ldots,n\} \to \{1,2,\ldots,n\}$ be a Hessenberg function. Let
  $\PFill(\lambda,h)$ denote the corresponding set of permissible
  fillings of $\lambda$. Let $\sigma \in S_n$ and denote by
  $\Hess(\sigma N \sigma^{-1}, h)$ the associated the nilpotent
  Hessenberg variety equipped with the $S^1$-action described in
  Section~\ref{sec:springer and circle action}. Then the assocation 
  \begin{equation}
    \label{eq:def Phi lambda sigma}
    \Phi_{\lambda, \sigma}: w \mapsto \phi^{-1}_{\lambda, \sigma}(w^{-1})
  \end{equation}
 defines a bijection from $\Hess(\sigma N \sigma^{-1}, h)^{S^1}$ to
 $\PFill(\lambda, h)$. 
\end{theorem}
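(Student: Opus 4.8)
The plan is to first handle the case $\sigma = \mathrm{id}$ and then bootstrap to general $\sigma$ by translation. For $\sigma = \mathrm{id}$ the claim reduces to the statement that $w \mapsto \phi_\lambda^{-1}(w^{-1})$ is a bijection from $\Hess(N,h)^{S^1}$ to $\PFill(\lambda,h)$, which is essentially the content of Tymoczko's paving-by-affines and her correspondence between affine cells and fillings of a Young diagram \cite[Theorems 6.1, 7.1]{Tym06} together with the identification of the $S^1$-fixed points as a subset of $S_n \cong \Flags(\C^n)^T$ described in Section~\ref{sec:springer and circle action}. So I would begin by recalling precisely which permutations $w$ arise: a $T$-fixed point $wB$ lies in $\Hess(N,h)$ exactly when $N$ sends the coordinate subspace $\langle e_{w(1)}, \dots, e_{w(i)}\rangle$ into $\langle e_{w(1)}, \dots, e_{w(h(i))}\rangle$ for every $i$. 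Since $N$ in Jordan canonical form acts on standard basis vectors in a combinatorially transparent way (cf. Remark~\ref{remark:english filling}), this condition translates into a condition on the filling $\phi_\lambda^{-1}(w)$ of $\lambda$ whose English word is $w$. I would unwind this carefully: $N e_{w(i)}$ is either $0$ or $e_m$ where $m$ is the entry immediately left of $w(i)$ in its row of $\phi_\lambda^{-1}(w)$; the Hessenberg condition says that whenever this happens, $m$ occupies one of the first $h(i)$ positions of the English word.

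The key combinatorial lemma I would then prove is that, using $\sigma = \mathrm{id}$ so that $\phi_{\lambda,\mathrm{id}} = \phi_\lambda$, the permutation $w$ satisfies the Hessenberg condition above if and only if the filling $\phi_\lambda^{-1}(w^{-1})$ is $(h,\lambda)$-permissible in the sense of Definition~\ref{definition:permissible filling}. The inverse appears because reading positions in the English word of $w$ corresponds to reading values in the English word of $w^{-1}$: if $m$ sits in position $p$ of the English word of $w$ and $w(i)$ sits in position $p+1$, then in the filling $\phi_\lambda^{-1}(w^{-1})$ the roles of "entry" and "position" are swapped, and the adjacency $\begin{array}{|c|c|}\cline{1-2} m & w(i) \\ \cline{1-2}\end{array}$ becomes an adjacency $\begin{array}{|c|c|}\cline{1-2} p & i \\ \cline{1-2}\end{array}$ with $p = (w^{-1} \text{ value at position } m)$. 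Matching up the inequality $p \le h(i)$ with the permissibility inequality $k \le h(j)$ for horizontal adjacencies is the heart of the argument; I expect this index-bookkeeping to be the main obstacle, and I would state it as a standalone sublemma with a small worked check to avoid sign/transpose errors.

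Finally, for general $\sigma \in S_n$ I would invoke the homeomorphism \eqref{eq:SN to SgNg^{-1}} given by left translation by $\sigma$: it carries $\Hess(N,h)^{S^1}$ bijectively onto $\Hess(\sigma N \sigma^{-1}, h)^{S^1}$, and on the level of $T$-fixed points (permutations) this is the map $w \mapsto \sigma w$ (as noted at the end of Section~\ref{sec:highest form}, the fixed points of the translated variety are the $\sigma$-translate of those of the original). Composing the $\sigma = \mathrm{id}$ bijection $w \mapsto \phi_\lambda^{-1}(w^{-1})$ with this translation, a point $w \in \Hess(\sigma N \sigma^{-1},h)^{S^1}$ is the translate $\sigma v$ of a unique $v \in \Hess(N,h)^{S^1}$, so $w^{-1} = v^{-1}\sigma^{-1}$; by Lemma~\ref{lemma:phi lambda sigma} we get $\phi_\lambda^{-1}(v^{-1}) = \phi_\lambda^{-1}((w^{-1}\sigma)\,)$... more cleanly, $\phi^{-1}_{\lambda,\sigma}(w^{-1}) = \phi^{-1}_\lambda(w^{-1}\sigma) = \phi^{-1}_\lambda(v^{-1})$, which lies in $\PFill(\lambda,h)$ precisely because $v \in \Hess(N,h)^{S^1}$ by the $\sigma = \mathrm{id}$ case. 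Since $v \mapsto v^{-1}$, translation by $\sigma$, and $\phi_\lambda^{-1}$ are all bijections, $\Phi_{\lambda,\sigma}$ is a bijection onto $\PFill(\lambda,h)$, as desired.
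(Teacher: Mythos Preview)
Your overall architecture---prove the case $\sigma = \mathrm{id}$ first, then translate---is exactly what the paper does, and your reduction for general $\sigma$ via $\Hess(\sigma N\sigma^{-1},h)^{S^1} = \sigma\cdot\Hess(N,h)^{S^1}$ together with Lemma~\ref{lemma:phi lambda sigma} matches the paper's argument line for line.

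Where you diverge is in the treatment of the base case $\sigma = \mathrm{id}$. You propose to work directly from the flag condition $NV_i \subseteq V_{h(i)}$ and do ad hoc index-bookkeeping between positions in the one-line notation of $w$ and entries of fillings. The paper instead invokes the matrix reformulation~\eqref{eq:def-Hess-v2}, so that $w \in \Hess(N,h)^{S^1}$ if and only if $w^{-1}Nw \in H$; then Theorem~\ref{adjacency} identifies $w^{-1}Nw$ as precisely the adjacent-pair matrix of the filling $\phi_\lambda^{-1}(w^{-1})$, and Lemma~\ref{lemma:permissible versus hessenberg space} says that an adjacent-pair matrix lies in $H$ exactly when the filling is $(h,\lambda)$-permissible. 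This three-line route replaces your bookkeeping entirely and makes the appearance of $w^{-1}$ transparent.

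One caution about your sketch: the sentence ``$N e_{w(i)}$ is either $0$ or $e_m$ where $m$ is the entry immediately left of $w(i)$ in its row of $\phi_\lambda^{-1}(w)$'' is not right as stated. The action of $N$ on $e_{w(i)}$ is governed by where $w(i)$ sits in the \emph{English} filling $\phi_\lambda^{-1}(\mathrm{id})$ (since $N$ is the adjacent-pair matrix of that filling), not in $\phi_\lambda^{-1}(w)$. You would likely have caught this in the ``standalone sublemma with a small worked check'' you propose, but it is exactly the kind of slip that the paper's route via Theorem~\ref{adjacency} avoids.
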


In the proof of Theorem~\ref{theorem: fixed points and permissible
  fillings} we use the following terminology. Suppose $h:
\{1,2,\ldots,n\} \to \{1,2,\ldots, n\}$ is a Hessenberg function. We
define the \textbf{Hessenberg space} $H$ corresponding to $h$ to be the subspace of
$\mathfrak{gl}(n,\C)$ defined by 
\begin{equation}
  \label{eq:Hessenberg space}
  H := \{ X \in \mathfrak{gl}(n,\C) \, \vert \, X_{ij} = 0 \textup{ if
  } i > h(j) \}
\end{equation}
where $X_{ij}$ denotes the $(i,j)$-th entry of the matrix $X$. 

\begin{example}
  Suppose $h = (2,3,4,4)$. Then 
\[
H = \{ X \in \mathfrak{gl}(4,\C) \, \vert \, X_{3,1} = X_{4,1} =
X_{4,2} = 0 \} = \left\{ \begin{bmatrix} \star & \star & \star & \star
    \\ \star & \star & \star& \star \\ 0 & \star & \star & \star \\ 0
    & 0 & \star & \star \end{bmatrix} \right\} \subseteq
\mathfrak{gl}(n,\C)
\]
where the $\star$ denotes free variables. 
\end{example}

It is straightforward to reformulate the
definition~\eqref{eq:def-Hess} of Hessenberg varieties as follows: for
a given Hessenberg function $h$ with corresponding Hessenberg space
$H$, 
\begin{equation}
  \label{eq:def-Hess-v2}
  \Hess(N,h) = \{[g] \in GL(n,\C)/B \, \vert \, g^{-1} N g \in H \}.
\end{equation}
In particular, the $S^1$-fixed points of $\Hess(N,h)$ are precisely 
\begin{equation}
  \label{eq:fixed points Hess}
  \Hess(N,h)^{S^1} \cong \{w \in S_n \, \vert \, w^{-1} N w \in H \}.
\end{equation}
We use the following lemma. 

\begin{lemma}\label{lemma:permissible versus hessenberg space} 
  Let $\lambda$ be a Young diagram with $n$ boxes and $h:
  \{1,2,\ldots,n\} \to \{1,2,\ldots,n\}$ a Hessenberg function with
  corresponding Hessenberg space $H$. Let $T$ be a filling of
  $\lambda$ by the alphabet $[n]$ and let $M$ be the $n \times n$
  matrix obtained by applying the adjacency algorithm to $T$. Then 
\[
T \textup{ is } (h,\lambda)\textup{-permissible} \quad
\Longleftrightarrow \quad M \in H.
\]
\end{lemma}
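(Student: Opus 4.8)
The plan is to unwind both sides of the claimed equivalence directly from the definitions, noting that the "adjacency algorithm" in the statement is precisely the adjacent-pair matrix construction, so $M = N_T$. First I would recall that by the definition of $N_T$, the nonzero entries of $M$ are exactly those $(k,j)$ with $k,j$ forming a horizontal adjacency $\begin{array}{|c|c|}\cline{1-2} k & j \\ \cline{1-2}\end{array}$ in $T$ (with $k$ to the left of $j$), and every such entry equals $1$; all other entries vanish. On the Hessenberg side, by \eqref{eq:Hessenberg space} we have $M \in H$ if and only if $M_{ij} = 0$ whenever $i > h(j)$.

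The key step is then a boxes-to-entries translation. Suppose $M \in H$. Take any horizontal adjacency $\begin{array}{|c|c|}\cline{1-2} k & j \\ \cline{1-2}\end{array}$ in $T$; then $M_{kj} = 1 \neq 0$, so the defining condition of $H$ forces $k \leq h(j)$, which is exactly the permissibility condition for that adjacency. Since this holds for every horizontal adjacency, $T$ is $(h,\lambda)$-permissible. Conversely, suppose $T$ is $(h,\lambda)$-permissible. To check $M \in H$ it suffices to check that every nonzero entry $M_{ij}$ satisfies $i \leq h(j)$. But every nonzero entry of $M$ arises as $M_{kj} = 1$ from some horizontal adjacency $\begin{array}{|c|c|}\cline{1-2} k & j \\ \cline{1-2}\end{array}$ in $T$, and permissibility gives $k \leq h(j)$; hence $M_{ij} = 0$ whenever $i > h(j)$, i.e.\ $M \in H$. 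This establishes the biconditional.

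There is essentially no hard obstacle here: the content is purely the observation that the adjacent-pair matrix is the exact matrix incarnation of the horizontal-adjacency data of $T$, so that the entrywise support condition defining $H$ matches the box-by-box inequality defining permissibility. The only thing requiring a small amount of care is the bookkeeping of the convention that in the pair $\begin{array}{|c|c|}\cline{1-2} k & j \\ \cline{1-2}\end{array}$ the left box $k$ indexes the \emph{row} and the right box $j$ the \emph{column} of the corresponding $1$ in $N_T$ (consistent with Definition of the adjacent-pair matrix and with Remark~\ref{remark:english filling}), so that $i > h(j)$ in the Hessenberg condition corresponds to $k > h(j)$ in the permissibility condition with the roles of left/right aligned correctly. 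Once that convention is fixed, both implications are immediate from the two definitions.
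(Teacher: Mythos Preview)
Your proof is correct and follows essentially the same approach as the paper: both arguments observe that the nonzero entries of $N_T$ correspond exactly to the horizontal adjacencies in $T$, so the membership condition $M \in H$ translates directly into the permissibility inequalities $k \leq h(j)$. The paper's version is simply more terse, phrasing the biconditional via ``precisely when'' rather than writing out each implication.
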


\begin{proof}
  By definition of the adjacency algorithm, the $(i,j)$-th entry of
  $M$ is non-zero precisely when \(\begin{array}{|c|c|} \cline{1-2} i & j \\
    \cline{1-2} \end{array}\) occurs in the filling of $T$. Hence by
  definition of $H$ the matrix $M$ is in $H$ precisely if, for all
  such adjacent pairs $(i,j)$ in $T$, we have $i \leq h(j)$. This is
  exactly the definition of a $(h,\lambda)$-permissible filling. 
\end{proof}

\begin{proof}[Proof of Theorem~\ref{theorem: fixed points and permissible fillings}]
  We first prove the claim for the special case $\sigma = id$. In this
  case $\phi_{\lambda,id} = \phi_\lambda$
  (cf. Remark~\ref{remark:english and rotated english}) and we wish to
  show that the association $w \mapsto \Phi_{\lambda, id}(w) :=
  \phi_{\lambda}^{-1}(w^{-1})$ defines a bijection between
  $\Hess(N,h)^{S^1}$ and $\PFill(\lambda,h)$. Since taking inverses is
  a bijection on $S_n$ and $\phi_\lambda$ is also a bijection from
  $\Fill(\lambda)$ to $S_n$, the content of the claim is that a
  permutation $w$ is in $\Hess(N,h)^{S^1}$ precisely when the filling
  $\phi_\lambda^{-1}(w^{-1})$ is permissible. Recall
  from~\eqref{eq:fixed points Hess} that 
\[
w \in \Hess(N,h)^{S^1} \quad \Longleftrightarrow \quad w^{-1}Nw \in H.
\]
By Theorem~\ref{adjacency}, the matrix $w^{-1}Nw$ is
precisely the adjacent-pair matrix for 
the filling $\phi_\lambda^{-1}(w^{-1})$. The claim now follows from
Lemma~\ref{lemma:permissible versus hessenberg space}.

The claim for $\Phi_{\lambda,\sigma}$ for arbitrary $\sigma \in S_n$
follows from the special case $\Phi_{\lambda,id}$ because 
\[
\Hess(\sigma N\sigma^{-1},h)^{S^1} = \sigma \cdot \Hess(N,h)^{S^1}
\]
and 
\[
\phi_{\lambda,\sigma}^{-1}((\sigma \cdot w)^{-1}) =
\phi_{\lambda,\sigma}^{-1}(w^{-1}\sigma^{-1}) =
\phi_{\lambda}^{-1}(w^{-1})
\]
where the last equality uses Lemma~\ref{lemma:phi lambda sigma}. This completes the
proof. 
\end{proof}

The following is used below in
Sections~\ref{sec:betti}-\ref{sec:pinball} as well as in
\cite{BayHar10b}. Given a Young diagram $\lambda$ with $n$ boxes,
denote by $T_{RE}$ the rotated English filling of $\lambda$. 

\begin{corollary}\label{corollary:fixed points and rotated english}
   Let $N$ be an $n \times n$ nilpotent matrix in Jordan canonical
   form and weakly decreasing sizes of Jordan blocks 
  with respect to the standard basis of $\C^n$ and let $h:
  \{1,2,\ldots,n\} \to \{1,2,\ldots,n\}$ be a Hessenberg function. Let
  $\PFill(\lambda,h)$ denote the corresponding set of permissible
  fillings of $\lambda$. Let $\sigma = \phi_\lambda(T_{RE})$ be the
  permutation corresponding to the rotated English filling of
  $\lambda$. 
  Then 
  \begin{equation}
    \label{eq:def Phi lambda sigma}
    \Phi_{\lambda, \sigma}: w \mapsto \phi^{-1}_{\lambda, \sigma}(w^{-1})
  \end{equation}
 is a bijection from $\Hess(\sigma N \sigma^{-1}, h)^{S^1}$ to
 $\PFill(\lambda, h)$. 
\end{corollary}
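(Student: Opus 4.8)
The plan is to deduce Corollary~\ref{corollary:fixed points and rotated english} as an immediate special case of Theorem~\ref{theorem: fixed points and permissible fillings}. The theorem already asserts that for \emph{any} permutation $\sigma \in S_n$, the map $\Phi_{\lambda,\sigma}: w \mapsto \phi^{-1}_{\lambda,\sigma}(w^{-1})$ is a bijection from $\Hess(\sigma N \sigma^{-1},h)^{S^1}$ to $\PFill(\lambda,h)$. So the only thing to observe is that the particular permutation named in the corollary, namely $\sigma = \phi_\lambda(T_{RE})$ where $T_{RE}$ is the rotated English filling of $\lambda$, is a legitimate element of $S_n$ to which the theorem applies. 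Since $\phi_\lambda$ is a bijection from $\Fill(\lambda)$ to $S_n$ (noted right after Definition~\ref{english}), $\sigma = \phi_\lambda(T_{RE})$ is a well-defined permutation, and there is nothing further to check.

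Concretely, I would write: apply Theorem~\ref{theorem: fixed points and permissible fillings} with this choice of $\sigma$. The theorem's hypotheses are exactly that $N$ is nilpotent in Jordan canonical form with weakly decreasing block sizes, that $h$ is a Hessenberg function, and that $\sigma$ is an arbitrary permutation in $S_n$ — all of which hold here. The conclusion of the theorem is verbatim the conclusion of the corollary. One might additionally recall, for the reader's orientation, Remark~\ref{remark:english and rotated english} and Corollary~\ref{corollary:tymoczko}, which explain why this particular $\sigma$ is the natural one: it is precisely the permutation for which $\sigma N \sigma^{-1}$ is the highest form used by Tymoczko in \cite{Tym06}, and for which $\phi_{\lambda,\sigma}$ is the rotated English reading.

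There is essentially no obstacle; this corollary is a pure specialization, and the proof is a single invocation. The proof text can simply read: ``This is the special case of Theorem~\ref{theorem: fixed points and permissible fillings} in which $\sigma$ is taken to be $\phi_\lambda(T_{RE})$.'' If desired one could add a sentence pointing out, via Remark~\ref{remark:english and rotated english}, that under this choice the map $\phi^{-1}_{\lambda,\sigma}$ is inverse to the rotated English reading, so that the bijection $\Phi_{\lambda,\sigma}$ sends a fixed point $w$ to the filling whose rotated English word is $w^{-1}$ — which is the form in which the statement is used in \cite{BayHar10b} and in Sections~\ref{sec:betti}--\ref{sec:pinball}.
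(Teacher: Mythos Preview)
Your proposal is correct and matches the paper's approach exactly: the paper presents this corollary as an immediate specialization of Theorem~\ref{theorem: fixed points and permissible fillings} to the particular choice $\sigma = \phi_\lambda(T_{RE})$, with no further argument given.
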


\section{Betti-acceptable pinball and linear independence}\label{sec:betti}

For the rest of the manuscript we restrict attention to nilpotent
Springer varieties, i.e., the case in which the Hessenberg function is
the identity function $h(i)=i$ for all $1 \leq i \leq n$.  In this section we
recount for the convenience of the reader several ideas developed in
\cite{HarTym10, BayHar10b} which are used in the next two
sections in the study of a special class of nilpotent Springer varieties. 
First we recall the \textbf{dimension pair
  algorithm} introduced in \cite{BayHar10b} 
which associates to each $S^1$-fixed point in a
nilpotent Hessenberg variety a permutation in $S_n$. We also recall
the interpretation of the algorithm in terms of the poset pinball game
introduced in \cite{HarTym10}.  More specifically, in the case of
nilpotent Springer varieties, the algorithm has an interpretation as
producing the output of a \textbf{successful game of Betti poset
  pinball}, as is shown in \cite[Proposition 3.6]{BayHar10b}.  We keep
exposition brief and refer the reader to \cite{BayHar10b} for details.

We begin with the definition of dimension pairs for the special case
of the identity Hessenberg function $h(i)=i$. 

\begin{definition}
Let $\lambda$ a Young diagram with
$n$ boxes
and $T$ a permissible filling of $\lambda$. 
The pair $(a, b)$ is a \textbf{dimension pair} of $T$ 
if the following conditions hold:
\begin{enumerate} 
\item \(b > a, \)
\item $b$ is either 
\begin{itemize}
\item below $a$ and in the same column, or 
\item anywhere in a column strictly to the left of the column of $a$, 
\end{itemize} 
and 
\item if there exists a box with filling $c$ directly adjacent to the right
of $a$, then $b \leq c$. 
\end{enumerate}
For a dimension pair $(a,b)$ of $T$, we will refer to $b$ as the
\textbf{top part} of the dimension pair. 
\end{definition}

\begin{example}
Let $\lambda = (2,2)$. 
For the permissible filling 
\[
{\def\lr#1{\multicolumn{1}{|@{\hspace{.6ex}}c@{\hspace{.6ex}}|}{\raisebox{-.3ex}{$#1$}}}
\raisebox{-.6ex}{$\begin{array}[b]{cc}
\cline{1-1}\cline{2-2}
\lr{1}&\lr{3}\\
\cline{1-1}\cline{2-2}
\lr{2}&\lr{4}\\
\cline{1-1}\cline{2-2}
\end{array}$}
}
\]
the dimension pairs are $\{(1,2),(3,4)\}$.
\end{example}

Given a permissible filling $T$ of $\lambda$, 
denote by $DP^T$ the set of dimension pairs of $T$. 
For each integer $\ell$ with $2 \leq \ell \leq n$, define 
\begin{equation}\label{eq:def xell}
x_\ell := \lvert \{ (a,\ell) \hsm \vert \hsm (a,\ell) \in DP^T \} \rvert.
\end{equation}
We call the integral vector $\mathbf{x} = (x_2, x_3, \ldots, x_n)$ the
\textbf{list of top parts} of $T$.  To each such $\mathbf{x}$ we
associate a permutation in $S_n$ as follows. As a preliminary step,
for each $\ell$ with $2 \leq \ell \leq n$ define
\[
u_{\ell}(\mathbf{x}) := \begin{cases} s_{\ell-1} s_{\ell-2} \cdots s_{\ell-x_\ell}
  \quad \textup{ if } x_\ell > 0 \\
1 \quad \textup{ if } x_\ell = 0 
\end{cases} 
\]
where $s_i$ denotes the simple transposition $(i,i+1)$ in $S_n$ and
$1$ denotes the identity permutation. We define an association
$\mathbf{x} \mapsto \omega(\mathbf{x}) \in S_n$ by
\begin{equation}\label{eq:permutation from top parts} 
\omega(\mathbf{x}) := u_2(\mathbf{x}) u_3(\mathbf{x})
\cdots u_n(\mathbf{x}) \in S_n.
\end{equation}

With the terminology in place we now recall the \textbf{dimension pair
algorithm} introduced in \cite{BayHar10b}. Suppose $N$ is a nilpotent $n \times n$ matrix in Jordan
canonical form and weakly decreasing sizes of Jordan blocks, with corresponding Young diagram $\lambda$. Following
notation in Section~\ref{sec:S1 action} denote by $T_{RE}$ the rotated
English filling of $\lambda$ and let $\sigma :=
\phi_\lambda(T_{RE})$ be the permutation such that $N_{hf} :=
\sigma N \sigma^{-1}$ is the choice of highest form of $N$ used in
\cite[Section 4]{Tym06}. 

\medskip
\noindent \textbf{Definition of $\roll: \mathcal{S}_{N_{hf}}^{S^1} \to
  S_n$:}

\begin{enumerate}
\item Let $w \in \Hess(N_{hf}, h)^{S^1}$ and let $\phi_{\lambda,\sigma}^{-1}(w^{-1})$ be its corresponding
  permissible filling. 
\item Let $DP^{\phi_{\lambda, \sigma}^{-1}(w^{-1})}$ be the set of dimension pairs in the permissible
  filling $\phi_{\lambda, \sigma}^{-1}(w^{-1})$. 
\item For each $\ell$ with $2 \leq \ell \leq n$, 
set 
\[
x_\ell := \lvert \{  (a,\ell) \hsm \vert \hsm (a, \ell) \in
DP^{\phi_{\lambda, \sigma}^{-1}(w^{-1})} \} \rvert
\]
as in~\eqref{eq:def xell} and define \(\mathbf{x} := (x_2, \ldots, x_n).\) 
\item Define $\roll(w) := (\omega(\mathbf{x}))^{-1}$ where $\omega(\mathbf{x})$
  is the permutation associated to the integer vector $\mathbf{x}$
  defined in~\eqref{eq:permutation from top parts}.
\end{enumerate}

We call $\roll(w)$ the \textbf{rolldown} of $w$, following terminology
introduced in \cite{HarTym10}. The idea motivating the dimension pair
algorithm is that we can interpret the association $w \mapsto
\roll(w)$ as a result of a game of \textbf{poset pinball}, defined in
\cite[Section 3]{HarTym10}. A poset pinball game starts with the data
of an ambient partially ordered set, a rank function $\ell$ on the poset, and
a designated subset  of the poset (called
the \textbf{initial subset}); in our setting these are the permutation
group $S_n$ equipped with Bruhat order, rank function $\ell: S_n \to
\Z$ given by Bruhat length, and the Springer fixed points
$\mathcal{S}_{N_{hf}}^{S^1}$ respectively. The \textbf{Betti pinball}
version of the game then proceeds by
assigning to each element of $\mathcal{S}_{N_{hf}}^{S^1}$ a
permutation in $S_n$ satisfying certain conditions (see \cite[Section
3]{HarTym10} for details), one of which concerns the Betti numbers of
$\mathcal{S}_{N_{hf}}$. We recall the following 
result
of Tymoczko (reformulated in our language). Although in \cite{Tym06}
Tymoczko deals with a more general situation we state her result only for the 
special case of Springer varieties.  The statement assumes
that $N_{hf}$ is in the highest form 
corresponding to the rotated English filling of
Definition~\ref{our filling}. 

\begin{theorem}\label{theorem:paving} \textbf{(\cite[Theorem
    1.1]{Tym06})}
Let 
$N_{hf}: \C^n \to \C^n$ be a nilpotent matrix 
in highest form chosen as above
and let $\lambda := \lambda_{N_{hf}}$. Let 
$\mathcal{S}_{N_{hf}}$ denote the corresponding nilpotent Springer
variety. 
There is a paving by (complex) affine cells of $\mathcal{S}_{N_{hf}}$ 
such that: 
  \begin{itemize}
  \item the affine cells are in one-to-one correspondence with
    $\mathcal{S}_{N_{hf}}^{S^1}$, and
 \item the (complex) dimension of the affine cell $C_w$ corresponding to a
    fixed point \(w \in \mathcal{S}_{N_{hf}}^{S^1}\) is 
\begin{equation}\label{eq:dim C_w}
\dim_{\C}(C_w) =  \lvert DP^{\phi_{\lambda, \sigma}^{-1}(w^{-1})} \rvert
\end{equation}
where $\sigma = \phi_\lambda(T_{RE})$. 
\end{itemize}
\end{theorem}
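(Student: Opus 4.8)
The plan is to derive this statement from Tymoczko's original results in \cite[Theorems 1.1, 6.1, and 7.1]{Tym06} together with the combinatorial dictionary built in Sections~\ref{sec:highest form}--\ref{sec:S1 action}; the geometric content is entirely in \cite{Tym06}, and the remaining work is a translation of bookkeeping conventions.

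First I would recall Tymoczko's construction in our setting. Since $N_{hf} = \sigma N \sigma^{-1}$ is in highest form, \cite[Theorem 6.1]{Tym06} gives a paving of $\Hess(N_{hf},h)$ by the affine cells
\[
C_w := \Hess(N_{hf},h) \cap BwB, \qquad w \in S_n,
\]
where only the nonempty intersections with Schubert cells are retained. Specializing to the identity Hessenberg function $h(i)=i$ yields $\mathcal{S}_{N_{hf}}$. Because each Schubert cell $BwB$ contains the unique $T$-fixed flag $wB$, and $C_w$ is nonempty precisely when it contains $wB$, the nonempty cells are indexed by $\{\, w \in S_n : wB \in \mathcal{S}_{N_{hf}} \,\}$, which by the discussion in Section~\ref{sec:springer and circle action} equals $\mathcal{S}_{N_{hf}}^{S^1}$. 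This establishes the first bullet.

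For the dimension formula I would proceed in two steps. Step one: identify the filling that Tymoczko's construction attaches to the cell $C_w$ with the permissible filling $\phi_{\lambda,\sigma}^{-1}(w^{-1})$ appearing in~\eqref{eq:dim C_w}. This is where Corollary~\ref{corollary:fixed points and rotated english} does the work, using that $N_{hf}$ is precisely the highest form arising from the rotated English filling (Corollary~\ref{corollary:tymoczko}) and that $\sigma = \phi_\lambda(T_{RE})$; Theorem~\ref{adjacency} and Theorem~\ref{theorem: fixed points and permissible fillings} supply the needed compatibility between the English and rotated-English readings, conjugation by $\sigma$, and the adjacent-pair matrix. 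Step two: check that Tymoczko's combinatorial count of $\dim_\C C_w$ \cite[Theorem 7.1]{Tym06}, specialized to $h(i)=i$, reduces term-by-term to counting pairs of entries $(a,b)$ satisfying conditions (1)--(3) in the definition of a dimension pair, so that $\dim_\C(C_w) = \lvert DP^{\phi_{\lambda,\sigma}^{-1}(w^{-1})} \rvert$.

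The main obstacle is entirely in Step one: Tymoczko and this manuscript use different reading conventions (rows versus columns), and there are several places where one could slip on an inverse or on a left-versus-right action when passing between a fixed point $w \in S_n$ and its associated filling. The point of Sections~\ref{sec:highest form} and~\ref{sec:S1 action} is exactly to make this passage unambiguous, so once the dictionary is fixed, Step two is a routine comparison of definitions.
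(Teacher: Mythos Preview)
Your proposal is correct and matches the paper's approach: Theorem~\ref{theorem:paving} is not proved in this manuscript but is stated as a reformulation of \cite[Theorem 1.1]{Tym06} in the language developed here, and the translation is precisely via the dictionary of Sections~\ref{sec:highest form}--\ref{sec:S1 action} that you invoke. The paper offers no further argument beyond the citation and the surrounding remark that the statement is ``reformulated in our language,'' so your outline of how that reformulation is carried out (Corollary~\ref{corollary:fixed points and rotated english}, Theorem~\ref{adjacency}, Theorem~\ref{theorem: fixed points and permissible fillings}, and the specialization $h(i)=i$ in \cite[Theorem 7.1]{Tym06}) is exactly the intended content.
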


In particular, Theorem~\ref{theorem:paving} implies that the odd Betti
numbers of $\mathcal{S}_{N_{hf}}$ are $0$, and the $2k$-th even Betti
number is precisely the number of fixed points $w$ such that $\lvert
DP^{\phi_{\lambda}^{-1}(w^{-1})} \rvert = k$.  In this sense the
dimension pairs in the permissible fillings
$\phi_{\lambda,\sigma}^{-1}(w^{-1})$ contain the data of the Betti
numbers of $\mathcal{S}_{N_{hf}}$.
One of the rules of the Betti pinball game (see \cite[Section
3]{HarTym10} and \cite[Section 3]{BayHar10b} for more details) is that 
for every $k \geq 0, k \in \Z$, we must have 
\[
b_k = \left\lvert \left\{\roll(w) \hsm \vert \hsm  w \in \Hess(N_{hf},
    h)^{S^1} \textup{ with }
\ell(\roll(w)) = k \right\} \right\rvert
\]
where $b_k$ denotes the $2k$-th Betti number of
$\mathcal{S}_{N_{hf}}$. 
By the definition of $\roll: \mathcal{S}_{N_{hf}}^{S^1} \to S_n$ this
condition is satisfied. 
It is shown in \cite[Proposition 3.8]{BayHar10b} that the 
association $w \mapsto \roll(w)$ also satisfies the other necessary
conditions to be 
interpreted in this context as an outcome of a successful game of Betti poset
  pinball.

For any $u \in S_n$, define the class $p_u := \pi(\sigma_u)$ to be the image
of the classical equivariant Schubert class $\sigma_u \in
H^*_T(\Flags(\C^n))$ 
under the projection 
\begin{equation}\label{eq:projection}
\pi: H^*_{T}(\Flags(\C^n)) \to H^*_{S^1}(\mathcal{S}_{N_{hf}})
\end{equation}
induced by the inclusion of groups $S^1 \into T$ and the 
$S^1$-equivariant inclusion of
spaces $\mathcal{S}_{N_{hf}} \into \Flags(\C^n)$. In analogy with the
terminology in \cite{HarTym09, BayHar10b}, we refer to the images
$p_u$ as \textbf{Springer Schubert classes}.

One of the goals of poset pinball is to build explicit
module bases for equivariant cohomology rings. In the context of
nilpotent Hessenberg varieties, one method by which to
do so is to find an appropriate subset of the Hessenberg Schubert
classes $p_u$ (defined analogously to the Springer Schubert classes
above) which form a module basis for
$H^*_{S^1}(\Hess(N_{hf},h))$. To show that a subset is a basis,
we must in particular show that the subset is linearly independent. 
 Using the fact that equivariant Schubert classes satisfy 
\begin{equation}
  \label{eq:schuberts upper triangular}
  \sigma_v(w) = 0 \textup{ if } w \not \geq v
\end{equation}
for all $w,v \in S_n$, it follows that if the rolldowns $\roll(w)$ of
the Hessenberg fixed points satisfy the 
\textbf{poset-upper-triangularity condition} 
\begin{equation}\label{eq:inequality}
\roll(w) \leq u \Leftrightarrow w \leq u
\end{equation}
for all $w, u \in \mathcal{S}_{N_{hf}}^{S^1}$, then the corresponding
Hessenberg Schubert classes are linearly independent \cite[Section
2]{HarTym10}. 
In \cite{BayHar10b} the
results of the dimension pair algorithm is studied in detail for a 
special case of regular nilpotent Hessenberg varieties
$\Hess(N,h)$. In this case it turns out that the set of permutations
$\{\roll(w)\}_{w \in \Hess(N,h)^{S^1}}$ satisfy
the poset-upper-triangularity property~\eqref{eq:inequality} (see \cite[Theorem 4.1]{BayHar10b}). 
Combining this poset-upper-triangularity with the fact that the
rolldowns obtained by the dimension pair algorithm are compatible with the Betti numbers of
$\Hess(N_{hf},h)$ \cite[Lemma 3.6]{BayHar10b}, 
it then follows from \cite[Proposition
4.14]{HarTym10} that the corresponding Hessenberg Schubert classes
$\{p_{\roll(w)}\}_{w \in \Hess(N,h)^{S^1}}$ form a $H^*_{S^1}(\pt)$-module basis for
$H^*_{S^1}(\Hess(N,h))$ \cite[Proposition 3.9]{BayHar10b}. 

However, it turns out that in
the $(n-2,2)$ Springer variety case studied in detail below, the rolldowns
$\{\roll(w)\}_{w \in \mathcal{S}_{N_{hf}}^{S^1}}$ coming from the
dimension pair algorithm are not
necessarily poset-upper-triangular, as we show below, so we cannot
apply \cite[Proposition 4.14]{HarTym10}. Instead it requires further
analysis to determine that the classes $\{p_{\roll(w)}\}_{w \in
  \mathcal{S}_{N_{hf}}^{S^1}}$ are linearly independent; this is the
content of Section~\ref{sec:pinball} below. Once linear independence
is established we use the following proposition to conclude that the
set $\{p_{\roll(w)}\}_{w \in \mathcal{S}_{N_{hf}}^{S^1}}$ is a module
basis.

\begin{proposition}\label{proposition: if triangular then basis}
Let $N:\C^n \to \C^n$ be a nilpotent operator in standard Jordan
canonical form with weakly decreasing Jordan block sizes with
corresponding Young diagram $\lambda$. Let
$\mathcal{S}_{N_{hf}}$ be the Springer
variety corresponding to the highest form $N_{hf} := \sigma N \sigma^{-1}$ where
$\sigma$ is the permutation corresponding to the rotated English
filling of $\lambda$, 
equipped with the $S^1$ action defined in~\eqref{eq:conjugated circle}. 
Let $\roll: \mathcal{S}_{N_{hf}}^{S^1} \to S_n$ be the dimension-pair
algorithm defined above. Suppose the classes 
$\{p_{\roll(w)} \hsm
\vert \hsm w \in \mathcal{S}_{N_{hf}}^{S^1}\}$ are linearly independent in
$H^*_{S^1}(\mathcal{S}_{N_{hf}})$. Then the set $\{p_{\roll(w)} \hsm
\vert \hsm w \in \mathcal{S}_{N_{hf}}^{S^1}\}$ of Springer Schubert classes form a $H^*_{S^1}(\pt)$-module
basis for the $S^1$-equivariant cohomology ring
$H^*_{S^1}(\mathcal{S}_{N_{hf}})$. 
\end{proposition}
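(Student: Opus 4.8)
The plan is to deduce the module-basis conclusion from the hypothesized linear independence by a graded rank (Hilbert series) count, using Tymoczko's affine paving (Theorem~\ref{theorem:paving}) to pin down the graded $H^*_{S^1}(\pt)$-module structure of $H^*_{S^1}(\mathcal{S}_{N_{hf}})$. I would begin by fixing the gradings: identify $H^*_{S^1}(\pt) \cong \C[t]$ with $t$ of cohomological degree $2$, and for a graded $\C[t]$-module $M$ (bounded below, with finite-dimensional graded pieces) write $\mathrm{Hilb}(M;q) := \sum_d \dim_{\C}(M^d)\,q^d$.

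Since $\mathcal{S}_{N_{hf}}$ admits a paving by complex affine cells (Theorem~\ref{theorem:paving}), its odd cohomology vanishes, so it is equivariantly formal; it is then standard (degeneration of the Serre spectral sequence of $\mathcal{S}_{N_{hf}} \to (\mathcal{S}_{N_{hf}})_{S^1} \to BS^1$) that $H^*_{S^1}(\mathcal{S}_{N_{hf}})$ is a free graded $\C[t]$-module with $\mathrm{Hilb}\big(H^*_{S^1}(\mathcal{S}_{N_{hf}});q\big) = \frac{\sum_{k \geq 0} b_k q^{2k}}{1-q^2}$, where $b_k$ denotes the $2k$-th Betti number of $\mathcal{S}_{N_{hf}}$. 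Next I would place the candidate classes in the correct degrees: the equivariant Schubert class $\sigma_u \in H^*_T(\Flags(\C^n))$ is homogeneous of degree $2\ell(u)$, and the projection $\pi$ of~\eqref{eq:projection} preserves cohomological degree, so $p_{\roll(w)} = \pi(\sigma_{\roll(w)}) \in H^{2\ell(\roll(w))}_{S^1}(\mathcal{S}_{N_{hf}})$. By the Betti-compatibility property of the dimension pair algorithm recalled above, $\lvert \{ w \in \mathcal{S}_{N_{hf}}^{S^1} : \ell(\roll(w)) = k\} \rvert = b_k$ for every $k$. Hence the free graded $\C[t]$-module $F := \bigoplus_{w \in \mathcal{S}_{N_{hf}}^{S^1}} \C[t]\,e_w$, with $e_w$ placed in degree $2\ell(\roll(w))$, satisfies $\mathrm{Hilb}(F;q) = \mathrm{Hilb}\big(H^*_{S^1}(\mathcal{S}_{N_{hf}});q\big)$.

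Finally I would show that the degree-preserving $\C[t]$-linear map $\Psi \colon F \to H^*_{S^1}(\mathcal{S}_{N_{hf}})$, $e_w \mapsto p_{\roll(w)}$, is an isomorphism. Its injectivity is exactly the hypothesized $H^*_{S^1}(\pt)$-linear independence of the $p_{\roll(w)}$. From the short exact sequence $0 \to F \xrightarrow{\Psi} H^*_{S^1}(\mathcal{S}_{N_{hf}}) \to \operatorname{coker}\Psi \to 0$ of graded $\C[t]$-modules, Hilbert series are additive degree-by-degree, so $\mathrm{Hilb}(\operatorname{coker}\Psi;q) = \mathrm{Hilb}\big(H^*_{S^1}(\mathcal{S}_{N_{hf}});q\big) - \mathrm{Hilb}(F;q) = 0$; therefore $\operatorname{coker}\Psi = 0$, $\Psi$ is surjective, hence bijective, and $\{p_{\roll(w)}\}_{w \in \mathcal{S}_{N_{hf}}^{S^1}}$ is a free $H^*_{S^1}(\pt)$-module basis for $H^*_{S^1}(\mathcal{S}_{N_{hf}})$.

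The one nontrivial ingredient is Step~2: that $H^*_{S^1}(\mathcal{S}_{N_{hf}})$ is a free graded $\C[t]$-module whose number of generators in degree $2k$ equals $b_k$. Everything after that is formal. This freeness is a standard consequence of the affine paving of Theorem~\ref{theorem:paving} and the resulting equivariant formality, and I would cite it (e.g. from \cite{HarTym10} or the GKM literature) rather than reprove it — taking care only to note that the conjugated circle $\sigma S^1 \sigma^{-1}$ of~\eqref{eq:conjugated circle} is compatible with this paving, since the paving cells are built from Schubert-type cells preserved by the relevant torus.
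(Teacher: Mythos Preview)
Your argument is correct. The paper's own proof is a two-line appeal to external results: it notes that $\roll$ is a successful outcome of Betti poset pinball (\cite[Proposition~3.7]{BayHar10b}) and then invokes \cite[Proposition~4.13]{HarTym10}, which is precisely the general statement that for Betti-compatible pinball, linear independence of the rolldown classes implies they form a module basis. What you have done is unpack that black box: your Hilbert-series argument (equivariant formality from the affine paving, freeness of $H^*_{S^1}(\mathcal{S}_{N_{hf}})$ over $\C[t]$ with graded rank given by the Betti numbers, matching of degrees via $\ell(\roll(w)) = \lvert DP^{\phi_{\lambda,\sigma}^{-1}(w^{-1})}\rvert$, then injectivity plus equal Hilbert series forces surjectivity) is exactly the content behind \cite[Proposition~4.13]{HarTym10} specialized to this setting. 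So the two proofs are the same mathematics, with the paper delegating to citations and you spelling the argument out. One minor remark: your closing caveat about the conjugated circle being ``compatible with the paving'' is not actually needed for equivariant formality---vanishing of odd ordinary cohomology (which the paving gives) already forces degeneration of the Serre spectral sequence regardless of how the $S^1$ interacts with the cells.
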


\begin{proof} 
Since $\roll: \mathcal{S}_{N_{hf}}^{S^1} \to S_n$ represents a
possible outcome of a successful game of Betti poset pinball by
\cite[Proposition 3.7]{BayHar10b} 
 the assertion follows from \cite[Proposition 4.13]{HarTym10}.
\end{proof}

\begin{remark}
  The $(n-2,2)$ Springer variety example studied here is the first
  example in the poset pinball literature of an instance of
  successful Betti pinball which does not yield a
  poset-upper-triangular basis. 
\end{remark}

Finally we briefly recall the injectivity results in
equivariant cohomology 
which computationally simplify the proof
that the Springer Schubert
classes are linearly independent. The next proposition follows from
known results about the topology of Springer varieties \cite{Spa76}
and a standard
argument in equivariant cohomology 
(see e.g. \cite[Remark 4.11 and Proposition 6.2]{HarTym10}). 

\begin{proposition}\label{prop:linear independence}
Let $N:\C^n \to \C^n$ be a nilpotent operator in standard Jordan
canonical form with weakly decreasing Jordan block sizes with
corresponding Young diagram $\lambda$. Let
$\mathcal{S}_{N_{hf}}$ be the Springer
variety corresponding to the highest form $N_{hf} := \sigma N \sigma^{-1}$ where
$\sigma$ is the permutation corresponding to the rotated English
filling of $\lambda$, 
equipped with the $S^1$-action defined in~\eqref{eq:conjugated circle}. 
Then the inclusion $\iota:
\mathcal{S}_{N_{hf}}^{S^1} \into \mathcal{S}_{N_{hf}}$ induces an
injection in $S^1$-equivariant
cohomology 
\[
\iota^* \colon  H^*_{S^1}(\mathcal{S}_{N_{hf}}) \into
H^*_{S^1}(\mathcal{S}_{N_{hf}}^{S^1}) \cong \bigoplus_{w \in
  \mathcal{S}_{N_{hf}}^{S^1}} H^*_{S^1}(\pt) \cong \bigoplus_{w \in
  \mathcal{S}_{N_{hf}}^{S^1}} \C[t].
\]
\end{proposition}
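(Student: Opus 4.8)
The plan is to deduce injectivity of $\iota^*$ from the \emph{equivariant formality} of $\mathcal{S}_{N_{hf}}$ with respect to the $S^1$-action, combined with the localization theorem in equivariant cohomology. The essential geometric input is Theorem~\ref{theorem:paving}: since $\mathcal{S}_{N_{hf}}$ admits a paving by complex affine cells (a fact originally due to Spaltenstein \cite{Spa76}), its ordinary cohomology $H^*(\mathcal{S}_{N_{hf}})$ vanishes in odd degrees, the classes of the closed cells furnishing a $\C$-basis.

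First I would invoke the standard consequence of the vanishing of odd cohomology: running the Serre spectral sequence of the Borel fibration $\mathcal{S}_{N_{hf}} \into \mathcal{S}_{N_{hf}} \times_{S^1} ES^1 \to BS^1$, all differentials vanish for degree-parity reasons, so the spectral sequence degenerates and $H^*_{S^1}(\mathcal{S}_{N_{hf}})$ is a free module over $H^*_{S^1}(\pt) \cong \C[t]$; concretely $H^*_{S^1}(\mathcal{S}_{N_{hf}}) \cong H^*(\mathcal{S}_{N_{hf}}) \otimes_{\C} \C[t]$ as $\C[t]$-modules. In particular $H^*_{S^1}(\mathcal{S}_{N_{hf}})$ is torsion-free over $\C[t]$.

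Next, as noted in Section~\ref{sec:springer and circle action}, the $S^1$-fixed locus $\mathcal{S}_{N_{hf}}^{S^1}$ is a finite set of isolated points, whence $H^*_{S^1}(\mathcal{S}_{N_{hf}}^{S^1}) \cong \bigoplus_{w \in \mathcal{S}_{N_{hf}}^{S^1}} H^*_{S^1}(\pt) \cong \bigoplus_{w} \C[t]$, exactly as displayed in the statement. The localization theorem then says that $\iota^*$ becomes an isomorphism after inverting $t$, so $\ker(\iota^*)$ is a $\C[t]$-torsion submodule of $H^*_{S^1}(\mathcal{S}_{N_{hf}})$; since that module is torsion-free by the previous step, $\ker(\iota^*) = 0$. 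This is precisely the asserted injectivity, so the argument is complete.

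The only step requiring genuine input (rather than formal homological algebra) is the passage from the affine paving to equivariant formality, i.e.\ the vanishing of the odd cohomology of $\mathcal{S}_{N_{hf}}$: this is where \cite{Spa76} (equivalently Theorem~\ref{theorem:paving}) is indispensable, since without it the restriction-to-fixed-points map can acquire a nonzero torsion kernel. Everything else is the routine localization argument, carried out exactly as in \cite[Remark 4.11 and Proposition 6.2]{HarTym10}.
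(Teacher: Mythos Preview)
Your proposal is correct and is precisely the argument the paper has in mind: the paper does not give a separate proof but simply remarks that the proposition ``follows from known results about the topology of Springer varieties \cite{Spa76} and a standard argument in equivariant cohomology (see e.g.\ \cite[Remark 4.11 and Proposition 6.2]{HarTym10}),'' which is exactly the equivariant-formality-plus-localization argument you have spelled out.
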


The above proposition implies that a Springer Schubert
class $p_u$ in $H^*_{S^1}(\mathcal{S}_{N_{hf}})$ can be
specified by $\iota^*(p_u)$, which we view as a vector of 
polynomials in $\C[t]$ with coordinates indexed by the fixed points
$\mathcal{S}_{N_{hf}}^{S^1}$. Following notation of \cite{HarTym09,
  HarTym10, BayHar10b}, we denote by $p_u(w)$ the $w$-th coordinate of
$\iota^*(p_u)$. 
The next result, which we use later,
is straightforward.

\begin{proposition}\label{prop: matrix form}
Let $N, \lambda, \mathcal{S}_{N_{hf}}$ be as above. If the columns of the matrix 
\[
(p_{\roll(w)}(u))_{w, u \in \mathcal{S}_{N_{hf}}^{S^1}}
\]
(where the variable $w$ is the index of the
columns and $u$ the index of the rows) 
are linearly independent over $H^*_{S^1}(\pt) \cong \C[t]$, then 
the set of Springer Schubert classes
$\{p_{\roll(w)}\}_{w \in \mathcal{S}_{N_{hf}}^{S^1}}$ is linearly
independent. 
\end{proposition}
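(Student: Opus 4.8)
The plan is to deduce the statement immediately from the injectivity result of Proposition~\ref{prop:linear independence}. First I would recall that by that proposition the restriction map
\[
\iota^* \colon H^*_{S^1}(\mathcal{S}_{N_{hf}}) \into \bigoplus_{u \in \mathcal{S}_{N_{hf}}^{S^1}} H^*_{S^1}(\pt) \cong \bigoplus_{u \in \mathcal{S}_{N_{hf}}^{S^1}} \C[t]
\]
is an injective homomorphism of $\C[t]$-modules, and that, by the definition of $p_u(w)$ recorded just before the statement, the $u$-th coordinate of $\iota^*(p_{\roll(w)})$ is exactly $p_{\roll(w)}(u)$. Hence $\iota^*(p_{\roll(w)})$ is precisely the $w$-th column of the matrix $\bigl(p_{\roll(w)}(u)\bigr)_{w,u \in \mathcal{S}_{N_{hf}}^{S^1}}$ appearing in the statement.

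Next I would take an arbitrary relation $\sum_{w} c_w\, p_{\roll(w)} = 0$ in $H^*_{S^1}(\mathcal{S}_{N_{hf}})$ with coefficients $c_w \in \C[t]$. Since $\iota^*$ is injective, this holds if and only if $\iota^*\bigl(\sum_w c_w p_{\roll(w)}\bigr) = 0$, and since $\iota^*$ is $\C[t]$-linear the left-hand side equals $\sum_w c_w\, \iota^*(p_{\roll(w)})$, i.e. the $\C[t]$-linear combination of the columns of the matrix with the same coefficients $c_w$. Therefore a dependence among the classes $p_{\roll(w)}$ is the same datum as a dependence among the columns of the matrix. Assuming the hypothesis that the columns are linearly independent over $\C[t]$, the only such relation has $c_w = 0$ for all $w$, so $\{p_{\roll(w)}\}_{w \in \mathcal{S}_{N_{hf}}^{S^1}}$ is linearly independent, as claimed.

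There is no genuine obstacle in this argument; it is purely formal once Proposition~\ref{prop:linear independence} is granted. The only points worth a sentence of care are that $H^*_{S^1}(\pt) \cong \C[t]$, so that ``linear independence over $\C[t]$'' is a meaningful condition matching the coefficient ring used for module bases; that $\iota^*$ is a homomorphism of $H^*_{S^1}(\pt)$-modules, so that it carries $\C[t]$-linear combinations of classes to the corresponding $\C[t]$-linear combinations of columns; and that the row/column indexing conventions for the matrix agree with the direct-sum decomposition of the target of $\iota^*$, so that the $w$-th column really is $\iota^*(p_{\roll(w)})$.
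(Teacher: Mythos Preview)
Your argument is correct and is exactly the straightforward deduction from Proposition~\ref{prop:linear independence} that the paper has in mind; indeed the paper does not spell out a proof at all, merely noting that the result ``is straightforward.'' There is nothing to add.
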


\section{Small-$n$ cases: $n=4$ and $n=5$}\label{sec:examples}

In this section and Section~\ref{sec:pinball} we restrict attention to the nilpotent
Springer varieties corresponding to Young diagrams of the form
$(n-2,2)$ for $n\geq 4$. In this setting we denote by
$\mathcal{S}_{(n-2,2)}$ the Springer variety $\mathcal{S}_{N_{hf}}$ corresponding to the
nilpotent matrix $N_{hf} :=\sigma N \sigma^{-1}$ 
in highest form with associated
Young diagram $(n-2,2)$ where $\sigma$ is the permutation
corresponding to the rotated English filling of $(n-2,2)$. The goal, as explained in
Section~\ref{sec:betti}, is to prove that the dimension pair algorithm
produces in this case a module basis for
$H^*_{S^1}(\mathcal{S}_{(n-2,2)})$. 
To this end we concretely compute the Springer fixed points,
associated permissible fillings, dimension pairs, and rolldowns 
for the cases $n=4$ 
and $n=5$, i.e. for the Springer varieties corresponding to the Young
diagrams 
\[
{\def\lr#1{\multicolumn{1}{|@{\hspace{.6ex}}c@{\hspace{.6ex}}|}{\raisebox{-.3ex}{$#1$}}}
\raisebox{-.6ex}{$\begin{array}[b]{cc}
\cline{1-1}\cline{2-2}
\lr{\phantom{1}}&\lr{\phantom{1}}\\
\cline{1-1}\cline{2-2}
\lr{\phantom{1}}&\lr{\phantom{1}}\\
\cline{1-1}\cline{2-2}
\end{array}$}
}
 \quad \textup{ and } \quad 
{\def\lr#1{\multicolumn{1}{|@{\hspace{.6ex}}c@{\hspace{.6ex}}|}{\raisebox{-.3ex}{$#1$}}}
\raisebox{-.6ex}{$\begin{array}[b]{ccc}
\cline{1-1}\cline{2-2}\cline{3-3}
\lr{\phantom{1}}&\lr{\phantom{1}}&\lr{\phantom{1}}\\
\cline{1-1}\cline{2-2}\cline{3-3}
\lr{\phantom{1}}&\lr{\phantom{1}}\\
\cline{1-1}\cline{2-2}
\end{array}$}
}
\]
We also 
explicitly check in these cases that the corresponding Springer Schubert classes are 
poset-upper-triangular and hence linearly independent. 
The inductive argument we give in the next section requires the $n=4$
case as its base case. We choose to additionally explicitly compute and record
the $n=5$ case because 
it suggests the 
outline of the general inductive argument.

Below we present two tables of data. The columns correspond 
to the following:

\begin{itemize}
\item $w$: an $S^1$-fixed point in the Springer variety
  $\mathcal{S}_{(n-2,2)}$. 
\item $w^{-1}$: the inverse of $w$.
\item $perm \hsm filling$: the permissible filling
  $\phi_{\lambda,\sigma}^{-1}(w^{-1})$. 
\item $dim \hsm pair$: the dimension pairs of the permissible filling.
\item $deg$: the number of dimension pairs of the permissible filling
(equivalently, the cohomology degree of the associated Springer Schubert class). 
\item $\omega(\mathbf{x})$ the permutation associated to the list
  $\mathbf{x}$ of ``top
  parts'' of the dimension pairs.
\item $\roll(w)$: inverse of $\omega(\mathbf{x})$, and by definition of the
  dimension pair algorithm, the rolldown of $w$.
\end{itemize}

\begin{example}
Let $n=4$ and $\lambda_N = (2,2)$. 
The following table records the data outlined above.  
Part of these computations are also contained in \cite{Mbirika:2010}.

\begin{table}[h] \centering
\caption{Dimension pair data for the Springer variety $\mathcal{S}_{(2,2)}$.}
\begin{tabular}{ | c | c | c | c | c | c |c| }\hline
$w$ & $w^{-1}$ & perm filling & dim pair & deg &  $\omega(\mathbf{x})$ &
$\roll(w)$ \\ \hline \hline

1234 $= e$ & 1234 & 
{\def\lr#1{\multicolumn{1}{|@{\hspace{.6ex}}c@{\hspace{.6ex}}|}{\raisebox{-.3ex}{$#1$}}}
\raisebox{-.6ex}{$\begin{array}[b]{cc}
\cline{1-1}\cline{2-2}
\lr{2}&\lr{4}\\
\cline{1-1}\cline{2-2}
\lr{1}&\lr{3}\\
\cline{1-1}\cline{2-2}
\end{array}$}
}
& $\emptyset$ & 0 & 1234 & 1234 $= e$ \\ \hline

2134  $= s_1$ & 2134 & 
{\def\lr#1{\multicolumn{1}{|@{\hspace{.6ex}}c@{\hspace{.6ex}}|}{\raisebox{-.3ex}{$#1$}}}
\raisebox{-.6ex}{$\begin{array}[b]{cc}
\cline{1-1}\cline{2-2}
\lr{1}&\lr{4}\\
\cline{1-1}\cline{2-2}
\lr{2}&\lr{3}\\
\cline{1-1}\cline{2-2}
\end{array}$}
}
& $\{(1,2)\}$ & 1 & 2134 & 2134 $=s_1$ \\ \hline

1324 $=s_2$ & 1324 & 
{\def\lr#1{\multicolumn{1}{|@{\hspace{.6ex}}c@{\hspace{.6ex}}|}{\raisebox{-.3ex}{$#1$}}}
\raisebox{-.6ex}{$\begin{array}[b]{cc}
\cline{1-1}\cline{2-2}
\lr{3}&\lr{4}\\
\cline{1-1}\cline{2-2}
\lr{1}&\lr{2}\\
\cline{1-1}\cline{2-2}
\end{array}$}
}
& $\{(2,3)\}$ & 1 & 1324 & 1324 $=s_2$ \\ \hline

1243 $=s_3$ & 1243 & 
{\def\lr#1{\multicolumn{1}{|@{\hspace{.6ex}}c@{\hspace{.6ex}}|}{\raisebox{-.3ex}{$#1$}}}
\raisebox{-.6ex}{$\begin{array}[b]{cc}
\cline{1-1}\cline{2-2}
\lr{2}&\lr{3}\\
\cline{1-1}\cline{2-2}
\lr{1}&\lr{4}\\
\cline{1-1}\cline{2-2}
\end{array}$}
}
& $\{(3,4)\}$ & 1 & 1243 & 1243 $=s_3$\\ \hline

2143 $=s_1 s_3$ & 2143 & 
{\def\lr#1{\multicolumn{1}{|@{\hspace{.6ex}}c@{\hspace{.6ex}}|}{\raisebox{-.3ex}{$#1$}}}
\raisebox{-.6ex}{$\begin{array}[b]{cc}
\cline{1-1}\cline{2-2}
\lr{1}&\lr{3}\\
\cline{1-1}\cline{2-2}
\lr{2}&\lr{4}\\
\cline{1-1}\cline{2-2}
\end{array}$}
}
& $\{(1,2),(3,4)\}$ & 2 & 2143 & 2143 $=s_1 s_3$ \\ \hline 

2413 $=s_1 s_3 s_2$ & 3142 & 
{\def\lr#1{\multicolumn{1}{|@{\hspace{.6ex}}c@{\hspace{.6ex}}|}{\raisebox{-.3ex}{$#1$}}}
\raisebox{-.6ex}{$\begin{array}[b]{cc}
\cline{1-1}\cline{2-2}
\lr{1}&\lr{2}\\
\cline{1-1}\cline{2-2}
\lr{3}&\lr{4}\\
\cline{1-1}\cline{2-2}
\end{array}$}
}
& $\{(2,4),(2,3)\}$ & 2 & 1342 & 1423 $=s_3 s_2$ \\ \hline 
\end{tabular}
\end{table}

From this table it can be seen explicitly that the only Springer fixed
point $w$ in $\mathcal{S}_{(2,2)}$ with $\roll(w)
\neq w$ is $w = 2413$. 
Moreover it is straightforward to
check that the rolldown $1423$ of $w = 2413$ is not Bruhat-less
than any of the other Springer fixed points. These facts together
imply that these Springer fixed points and associated rolldowns
satisfy the poset-upper-triangularity property 
\begin{equation}\label{eq:inequality-2}
\roll(w) \leq u \Leftrightarrow w \leq u
\end{equation}
for all fixed points $w, u$. By an argument identical
to \cite[Lemma 4.4]{BayHar10b} which uses the
poset-upper-triangularity property~\eqref{eq:schuberts upper triangular} of the equivariant
Schubert classes $\{\sigma_w\}_{w \in S_n}$, this implies that the Springer Schubert classes
$\{p_{\roll(w)}\}_{w \in \mathcal{S}_{(2,2)}^{S^1}}$ are
poset-upper-triangular and hence linearly independent and a
$H^*_{S^1}(\pt)$-module basis for $H^*_{S^1}(\mathcal{S}_{(2,2)})$. 
\end{example}

We have just explicitly checked that in the case $n=4$, the dimension
pair algorithm interpreted in terms of Betti pinball produces a module basis of
$H^*_{S^1}(\mathcal{S}_{(2,2)})$. We now compute the $n=5$ case and
relate it to the $n=4$ case, thereby illustrating 
the outline of the general inductive argument. 

\begin{example}\label{example:n=5}
Let $n=5$ and $\lambda = (3,2)$. 
Suppose $T$ is a permissible filling of $(3,2)$ where the 
entry $5$ is in the top row. Since the rows in a permissible filling are increasing this means
that the $5$ occurs in the rightmost box of the top row of $T$. Deleting this
box yields a valid permissible filling of $(2,2)$ which therefore
occurs in the previous $n=4$ example. For permissible fillings $T$ of
this form the corresponding fixed point
$w$ and its rolldown are easily seen to be identical to those
obtained in the previous example (viewed as elements of $S_5$ instead
of $S_4$ via the usual embedding $S_4 \into S_5$). Hence the permissible fillings in the $n=5$
case which do not occur in the $n=4$ case are precisely those for which the
entry $5$ is in the bottom row. There are four such permissible
fillings as may be seen in the table below. 

\begin{table}[h] \centering
\caption{Dimension pair data for the Springer variety $\mathcal{S}_{(3,2)}$.}
\begin{tabular}{ | c | c | c | c | c | c |c| }\hline
$w$  & $w^{-1}$ & perm filling & dim pair & deg &  $\omega(\mathbf{x})$ &
$\roll(w)$ \\ \hline \hline

12345 $=e$ & 12345 &  
{\def\lr#1{\multicolumn{1}{|@{\hspace{.6ex}}c@{\hspace{.6ex}}|}{\raisebox{-.3ex}{$#1$}}}
\raisebox{-.6ex}{$\begin{array}[b]{ccc}
\cline{1-1}\cline{2-2}\cline{3-3}
\lr{2}&\lr{4}&\lr{5}\\
\cline{1-1}\cline{2-2}\cline{3-3}
\lr{1}&\lr{3}\\
\cline{1-1}\cline{2-2}
\end{array}$}
}

 & $\emptyset$ &  0 & 12345 & 12345 $=e$ \\ \hline 

21345 $=s_1$ & 21345 &  
{\def\lr#1{\multicolumn{1}{|@{\hspace{.6ex}}c@{\hspace{.6ex}}|}{\raisebox{-.3ex}{$#1$}}}
\raisebox{-.6ex}{$\begin{array}[b]{ccc}
\cline{1-1}\cline{2-2}\cline{3-3}
\lr{1}&\lr{4}&\lr{5}\\
\cline{1-1}\cline{2-2}\cline{3-3}
\lr{2}&\lr{3}\\
\cline{1-1}\cline{2-2}
\end{array}$}
}

& $\{(1, 2)\}$ &  1& 21345 & 21345 $=s_1$\\ \hline

13245 $=s_2$ & 13245 &  
{\def\lr#1{\multicolumn{1}{|@{\hspace{.6ex}}c@{\hspace{.6ex}}|}{\raisebox{-.3ex}{$#1$}}}
\raisebox{-.6ex}{$\begin{array}[b]{ccc}
\cline{1-1}\cline{2-2}\cline{3-3}
\lr{3}&\lr{4}&\lr{5}\\
\cline{1-1}\cline{2-2}\cline{3-3}
\lr{1}&\lr{2}\\
\cline{1-1}\cline{2-2}
\end{array}$}
}

&  $\{(2, 3)\}$ &  1 & 13245 & 13245 $=s_2$ \\ \hline

12435 $=s_3$& 12435 &  
{\def\lr#1{\multicolumn{1}{|@{\hspace{.6ex}}c@{\hspace{.6ex}}|}{\raisebox{-.3ex}{$#1$}}}
\raisebox{-.6ex}{$\begin{array}[b]{ccc}
\cline{1-1}\cline{2-2}\cline{3-3}
\lr{2}&\lr{3}&\lr{5}\\
\cline{1-1}\cline{2-2}\cline{3-3}
\lr{1}&\lr{4}\\
\cline{1-1}\cline{2-2}
\end{array}$}
}

& $\{(3, 4)\}$ &  1 & 12435 & 12435 $=s_3$ \\ \hline

21435 $=s_1 s_3$ & 21435 &  
{\def\lr#1{\multicolumn{1}{|@{\hspace{.6ex}}c@{\hspace{.6ex}}|}{\raisebox{-.3ex}{$#1$}}}
\raisebox{-.6ex}{$\begin{array}[b]{ccc}
\cline{1-1}\cline{2-2}\cline{3-3}
\lr{1}&\lr{3}&\lr{5}\\
\cline{1-1}\cline{2-2}\cline{3-3}
\lr{2}&\lr{4}\\
\cline{1-1}\cline{2-2}
\end{array}$}
}

& $\{(1, 2) , (3, 4)\}$&  2& 21435 & 21435 $=s_1 s_3$\\ \hline

 24135 $= s_1 s_3 s_2$ & 31425 &  
{\def\lr#1{\multicolumn{1}{|@{\hspace{.6ex}}c@{\hspace{.6ex}}|}{\raisebox{-.3ex}{$#1$}}}
\raisebox{-.6ex}{$\begin{array}[b]{ccc}
\cline{1-1}\cline{2-2}\cline{3-3}
\lr{1}&\lr{2}&\lr{5}\\
\cline{1-1}\cline{2-2}\cline{3-3}
\lr{3}&\lr{4}\\
\cline{1-1}\cline{2-2}
\end{array}$}
}
& $\{(2, 3) , (2, 4)\}$ &  2 & 13425 & 14235 $=s_3 s_2$ \\ \hline

12453 $=s_3 s_4$ &  12534   & 
{\def\lr#1{\multicolumn{1}{|@{\hspace{.6ex}}c@{\hspace{.6ex}}|}{\raisebox{-.3ex}{$#1$}}}
\raisebox{-.6ex}{$\begin{array}[b]{ccc}
\cline{1-1}\cline{2-2}\cline{3-3}
\lr{2}&\lr{3}&\lr{4}\\
\cline{1-1}\cline{2-2}\cline{3-3}
\lr{1}&\lr{5}\\
\cline{1-1}\cline{2-2}
\end{array}$}
}

&  $\{(4, 5)\}$ &  1 & 12354 & 12354 $=s_4$ \\ \hline

21453 $=s_3 s_4 s_1$ & 21534 &  
{\def\lr#1{\multicolumn{1}{|@{\hspace{.6ex}}c@{\hspace{.6ex}}|}{\raisebox{-.3ex}{$#1$}}}
\raisebox{-.6ex}{$\begin{array}[b]{ccc}
\cline{1-1}\cline{2-2}\cline{3-3}
\lr{1}&\lr{3}&\lr{4}\\
\cline{1-1}\cline{2-2}\cline{3-3}
\lr{2}&\lr{5}\\
\cline{1-1}\cline{2-2}
\end{array}$}
}
& $\{(1, 2) , (4, 5)\}$ &  2 & 21354 & 21354 $= s_4 s_1$ \\ \hline

 24153 $=s_3 s_4 s_1 s_2$ &  31524 &  
{\def\lr#1{\multicolumn{1}{|@{\hspace{.6ex}}c@{\hspace{.6ex}}|}{\raisebox{-.3ex}{$#1$}}}
\raisebox{-.6ex}{$\begin{array}[b]{ccc}
\cline{1-1}\cline{2-2}\cline{3-3}
\lr{1}&\lr{2}&\lr{4}\\
\cline{1-1}\cline{2-2}\cline{3-3}
\lr{3}&\lr{5}\\
\cline{1-1}\cline{2-2}
\end{array}$}
}
 & $\{(2, 3) , (4, 5)\}$ &  2 & 13254 & 13254 $=s_4 s_2$ \\ \hline

24513 $= s_3 s_4 s_1 s_2 s_3$ &  41523 &  
{\def\lr#1{\multicolumn{1}{|@{\hspace{.6ex}}c@{\hspace{.6ex}}|}{\raisebox{-.3ex}{$#1$}}}
\raisebox{-.6ex}{$\begin{array}[b]{ccc}
\cline{1-1}\cline{2-2}\cline{3-3}
\lr{1}&\lr{2}&\lr{3}\\
\cline{1-1}\cline{2-2}\cline{3-3}
\lr{4}&\lr{5}\\
\cline{1-1}\cline{2-2}
\end{array}$}
}
&  $\{(3, 4) , (3, 5)\}$ &  2 & 12453 &  12534 $ = s_4 s_3$ \\ \hline

\end{tabular}
\end{table}

We claim that, as in the $n=4$ case, the rolldowns satisfy
the condition~\eqref{eq:inequality-2}, which then implies by the same
argument that the corresponding Springer Schubert classes are
poset-upper-triangular and hence linearly independent and a module
basis. To prove this claim it suffices to check~\eqref{eq:inequality-2}
for those $w$ for which $\roll(w) \neq w$. We check each case by
hand.

For $w
= s_1 s_3 s_2$ with $\roll(w) = s_3 s_2$, we see that
$\roll(w) < s_3 s_4 s_1 s_2$ and $\roll(w) < s_3 s_4 s_1 s_2
s_3$. Since also 
$w < s_3 s_4 s_1 s_2$ and $w < s_3 s_4 s_1 s_2　s_3$, the
claim holds in this case.  
Next observe that the last four fixed points in the above table 
are linearly ordered with respect to the Bruhat order, i.e. 
\[
s_3 s_4 < s_3 s_4 s_1 < s_3 s_4 s_1 s_2 < s_3 s_4 s_1 s_2 s_3.
\]
In the case of $w=s_3 s_4$ we have $\roll(w) = s_4$. Moreover 
$s_4$ is not Bruhat-less than any of the fixed points occurring in the
$n=4$ case and is Bruhat-less than all of the last four fixed points,
so the claim holds in this case. Similarly, the rolldowns for the last
three fixed points satisfy
\[
\roll(s_3 s_4 s_1) = s_4 s_1 \not < s_3 s_4, \quad 
\roll(s_3 s_4 s_1 s_2) = s_4 s_2 \not < s_3 s_4 s_1, \quad 
\roll(s_3 s_4 s_1 s_2 s_3) = s_4 s_3 \not < s_3 s_4 s_1 s_2, 
\]
so the claim holds in all cases. This proves the claim and hence that
the Springer Schubert classes are poset-upper-triangular in the $n=5$ case and hence a
module basis, as desired.

\end{example}

\section{A poset pinball module basis for $(n-2,2)$ Springer
varieties}\label{sec:pinball}

The main result of this section is that the dimension pair algorithm
produces a set of Springer Schubert classes $\{p_{\roll(w)}\}_{w \in
  \mathcal{S}_{(n-2,2)}^{S^1}}$ which are a 
module basis, in the case of $(n-2,2)$ Springer varieties for any $n
\geq 4$. We have the following.

\begin{theorem}\label{theorem:pinball basis}
Let $n \geq 4$. Let $N:\C^n \to \C^n$ be a nilpotent operator in standard Jordan
canonical form with weakly decreasing Jordan block sizes $n-2$ and
$2$. 
Let 
$N_{hf} := \sigma N \sigma^{-1}$ be the choice of highest form of $N$ where
$\sigma$ is the permutation corresponding to the rotated English
filling of $(n-2,2)$. 
Let $\mathcal{S}_{(n-2,2)}$ be the 
Springer 
variety corresponding to $N_{hf}$ 
equipped with the $S^1$-action defined
in~\eqref{eq:conjugated circle}. 
Let $\roll: \mathcal{S}_{(n-2,2)}^{S^1} \to S_n$ be the function
defined by the dimension-pair
algorithm. Then the columns of the matrix 
\[
(p_{\roll(w)}(u))_{w, u \in \mathcal{S}_{(n-2,2)}^{S^1}}
\]
with entries in $H^*_{S^1}(\pt) \cong \C[t]$ are linearly independent
over $H^*_{S^1}(\pt)$.  (Here $w$ is the variable indexing the columns
and $u$ the index of the rows.) In particular, the Springer Schubert classes
$\{p_{\roll(w)}\}_{w \in \mathcal{S}_{(n-2,2)}^{S^1}}$ form a
$H^*_{S^1}(\pt)$-module basis for the equivariant cohomology ring
$H^*_{S^1}(\mathcal{S}_{(n-2,2)})$ of the Springer variety.
\end{theorem}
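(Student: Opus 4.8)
The strategy is to show that the columns of the $\binom{n}{2}\times\binom{n}{2}$ matrix $M := \bigl(p_{\roll(w)}(u)\bigr)_{w,u \in \mathcal{S}_{(n-2,2)}^{S^1}}$ are linearly independent over $\C[t]\cong H^*_{S^1}(\pt)$; once this is established, Proposition~\ref{prop: matrix form} gives linear independence of the Springer Schubert classes $\{p_{\roll(w)}\}$, and Proposition~\ref{proposition: if triangular then basis} upgrades this to the module-basis statement. To prove linear independence of the columns I will in fact exhibit an ordering of $\mathcal{S}_{(n-2,2)}^{S^1}$ for which $M$ is lower-triangular with nonzero diagonal entries. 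As preparation, recall from Corollary~\ref{corollary:fixed points and rotated english} that $\mathcal{S}_{(n-2,2)}^{S^1}$ is in bijection with the row-strict fillings of $(n-2,2)$, of which there are exactly $\binom{n}{2}$, one for each (increasingly ordered) choice of the two bottom-row entries; using this bijection, the explicit circle weights of Lemma~\ref{lemma:conjugated circle}, and the dimension pair algorithm, I would first tabulate for each fixed point $w$ its filling $\phi_{\lambda,\sigma}^{-1}(w^{-1})$, its dimension pairs, and the permutation $\roll(w)$.

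The argument is by induction on $n$, the base case $n = 4$ being the computation carried out in Section~\ref{sec:examples}. Partition $\mathcal{S}_{(n-2,2)}^{S^1}$ into the set $A$ of fixed points whose filling has the entry $n$ in the top row — necessarily in its rightmost box, since the filling is row-strict — and the set $B$ of those whose filling has $n$ in the bottom row; then $|A| = \binom{n-1}{2}$, and $B$ consists of the $n-1$ fixed points $w_b$ with bottom row $\{b, n\}$, $b \in \{1, \dots, n-1\}$. Deleting the box that contains $n$ sets up a bijection of $A$ with the row-strict fillings of $(n-3,2)$, and this bijection preserves dimension pairs, and hence rolldowns, because a box at the right end of the top row belongs to no dimension pair and its removal does not alter the ``box to the right'' condition for any remaining pair. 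In particular, under the inclusion $S_{n-1}\hookrightarrow S_n$, every $w\in A$ and every $\roll(w)$ with $w\in A$ fixes $n$, whereas running the dimension pair algorithm on the fillings in $B$ yields $\roll(w_b) = s_{n-1}$ for $b=1$ and $\roll(w_b) = s_{n-1}s_{b-1}$ for $b\geq 2$, none of which fixes $n$.

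Order the fixed points by listing $A$ first, in the order it inherits recursively from the $(n-3,2)$ problem, and then $B$ in the order $b = 1, 2, \dots, n-1$; I claim $M$ is lower-triangular with nonzero diagonal for this order. The mechanism is~\eqref{eq:schuberts upper triangular}: since $p_{\roll(w)}(u) = \pi(\sigma_{\roll(w)}(u))$, the entry vanishes unless $\roll(w) \leq u$ in Bruhat order, and when $\roll(w) \leq u$ the restriction $\sigma_{\roll(w)}(u)$ is a nonzero product of roots $t_i - t_j$ none of whose factors the substitution $\pi$ can kill, because the circle weights in~\eqref{eq:conjugated circle} are pairwise distinct; so the entry is nonzero exactly when $\roll(w) \leq u$. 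Consequently: (i) if $w\in B$ and $u\in A$ then $\roll(w)\not\leq u$, since $u\in S_{n-1}$ while $\roll(w)\notin S_{n-1}$ and $v\leq u$ with $u\in S_{n-1}$ forces $v\in S_{n-1}$; hence the $A\times B$ block of $M$ is zero. (ii) By the inductive hypothesis the $(n-3,2)$ matrix is lower-triangular with nonzero diagonal, and since the vanishing/nonvanishing criterion above is insensitive to the change of circle weights between the two problems, the $A\times A$ block is lower-triangular with nonzero diagonal as well. (iii) For the $B\times B$ block, a reduced-word computation with the explicit one-line forms of the $w_b$ shows that $\roll(w_b)\leq w_{b'}$ if and only if $b\leq b'$, so this block too is lower-triangular with nonzero diagonal. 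Combining (i)--(iii), $M$ is lower-triangular with nonzero diagonal, hence $\det M$ is a nonzero element of $\C[t]$ and the columns of $M$ are linearly independent over $\C[t]$, closing the induction.

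I expect step (iii) to be the main obstacle: the rolldowns $\roll(w_b) = s_{n-1}s_{b-1}$ are precisely the feature that breaks poset-upper-triangularity in the sense of~\eqref{eq:inequality} (for $n$ large one finds Springer fixed points $w, u$ with $\roll(w)\leq u$ but $w\not\leq u$), so the triangularity of $M$ is with respect to our tailored order rather than the Bruhat order, and establishing the equivalence $\roll(w_b)\leq w_{b'} \Leftrightarrow b\leq b'$ requires careful bookkeeping with reduced words of the permutations attached to the bottom-row fillings. A byproduct of this analysis is the change of basis mentioned in the introduction: replacing, for each $w$ with $\roll(w)\neq w$, the class $p_{\roll(w)}$ by an appropriate degree-preserving combination of Springer Schubert classes indexed by Bruhat-smaller permutations turns $\{p_{\roll(w)}\}$ into a poset-upper-triangular — though no longer poset-pinball — module basis for $H^*_{S^1}(\mathcal{S}_{(n-2,2)})$.
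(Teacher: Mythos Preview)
Your block decomposition and the arguments for (i) and (ii) are essentially the same as the paper's, and correct. The gap is in step (iii): the $B\times B$ block $D$ is \emph{not} lower-triangular in any ordering of $B$ once $n\ge 6$. Concretely, with your ordering $b=1,2,\ldots,n-1$, take $b=4$ so that $\roll(w_4)=s_{n-1}s_3$. For $n\ge 6$ the transpositions $s_{n-1}$ and $s_3$ commute, hence $s_{n-1}s_3=s_3s_{n-1}$ is a reduced subword of $w_1=s_3s_4\cdots s_{n-1}$, so $\roll(w_4)\le w_1$ in Bruhat order and the entry $p_{\roll(w_4)}(w_1)$ is nonzero. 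More generally $\roll(w_b)=s_{n-1}s_{b-1}\le w_1$ for every $b$ with $4\le b\le n-2$, so the column indexed by such $w_b$ has nonzero entries above the diagonal; since $\roll(w_1)=s_{n-1}\le w_{b'}$ for all $b'$, no reordering of $B$ can repair this. (The $n=5$ case you might have checked by hand is misleading: there $s_{n-1}=s_4$ and $s_3$ do not commute, so the phenomenon does not yet appear.) The paper deals with this by computing the entries of $D$ explicitly via Billey's formula and the projection of Lemma~\ref{lemma:conjugated circle}, then observing that subtracting suitable multiples of the first column from the others produces a lower-triangular matrix with nonzero diagonal.

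A secondary issue: your claim that $p_{\roll(w)}(u)\ne 0$ \emph{exactly} when $\roll(w)\le u$, justified by ``distinct circle weights,'' is not on firm ground. Billey's formula expresses $\sigma_v(u)$ as a \emph{sum} of products of positive roots, and since the projection~\eqref{eq: projection to new torus (n,2) case} sends some positive roots to negative multiples of $t$ (e.g.\ $t_1-t_2\mapsto (2-n)t$), cancellation after projection is not excluded by distinctness of the weights alone. In the paper this is handled by explicit evaluation; if you want a soft argument you would need an additional positivity or genericity statement.
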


\begin{remark}
  The above theorem extends the subregular Springer case (which corresponds to
  Young diagrams of shape $(n-1,1)$), for which it was shown in
  \cite{HarTym10} that the set of Springer Schubert classes obtained
  by the dimension pair algorithm is poset-upper-triangular, so in
  particular linearly independent.  (Although the results in
  \cite{HarTym10} are not phrased using the terminology of this paper it is
  straightforward to see that the classes used in \cite{HarTym10} agree
  with those arising from the dimension pair algorithm. )
\end{remark}

Since the rows are increasing in a Springer permissible filling, we
can naturally decompose the set of $(n-2,2)$ permissible fillings into
two subsets: namely, those for which the largest entry $n$ occupies
the top row, and those for which $n$ occupies the bottom row. As
observed in Example~\ref{example:n=5} above, when $n$ is in the top
row, the permissible filling obtained by removing the rightmost
box in the top row is a permissible filling for the Young diagram
$(n-3,2)$, corresponding to the smaller Springer variety
$\mathcal{S}_{(n-3,2)}$.  This sets us up for an inductive
argument. Since we have already seen in Section~\ref{sec:examples} the
linear independence for the cases $n=4$ and $n=5$, we start the induction
start at $n=6$.  We begin with a preliminary lemma
generalizing the observations made in Example~\ref{example:n=5}.

\begin{lemma}\label{lemma:rolldown table for fixed points}
Let $n \geq 6$. Let $N, N_{hf}, \mathcal{S}_{(n-2,2)}$ and $\roll$ be
as in Theorem~\ref{theorem:pinball basis}.
Then 
\begin{itemize}
\item there are precisely $n-1$ permissible fillings of $(n-2,2)$ with
  $n$ in the bottom row, 
\item the $n-1$ such permissible fillings, their corresponding
  Springer fixed points $w$, and their rolldowns $\roll(w)$ are
  precisely those listed in the table below, 
\item these $n-1$ Springer fixed points are linearly ordered with
  respect to Bruhat order, i.e. 
  \begin{equation}
    \label{eq:total order}
    s_3 s_4 \cdots s_{n-2} s_{n-1} < s_3 s_4 \cdots s_{n-2} s_{n-1}
    s_1 < \cdots < s_3 s_4 \cdots s_{n-2} s_{n-1} s_1 s_2 \cdots
    s_{n-3} s_{n-2}.
  \end{equation}
\end{itemize}

\begin{table}[h] \centering
\caption{Dimension pair algorithm data for the Springer fixed points in $\mathcal{S}_{(n-2,2)}^{S^1}$
  corresponding to permissible fillings with $n$ in the bottom row.}
\begin{tabular}{ | c | c | c | c |}\hline
 pf & $w^{-1}$ & $w$ &  $v_{hf}$ \\ \hline \hline
{\def\lr#1{\multicolumn{1}{|@{\hspace{.6ex}}c@{\hspace{.6ex}}|}{\raisebox{-.3ex}{$#1$}}}
\raisebox{-.6ex}{$\begin{array}[b]{ccccc}
\cline{1-1}\cline{2-2}\cline{3-3}\cline{4-4}\cline{5-5}
\lr{2}&\lr{3}&\lr{4}& \lr{\cdots} & \lr{n-1}\\
\cline{1-1}\cline{2-2}\cline{3-3}\cline{4-4}\cline{5-5}
\lr{1}&\lr{n}\\
\cline{1-1}\cline{2-2}
\end{array}$}
}
& $1 \hsm 2 \hsm n \hsm 3 \hsm 4 \cdots$
& $s_3 s_4 \cdots s_{n-2}
    s_{n-1}$ 
& $s_{n-1}$ \\ \hline
{\def\lr#1{\multicolumn{1}{|@{\hspace{.6ex}}c@{\hspace{.6ex}}|}{\raisebox{-.3ex}{$#1$}}}
\raisebox{-.6ex}{$\begin{array}[b]{ccccc}
\cline{1-1}\cline{2-2}\cline{3-3}\cline{4-4}\cline{5-5}
\lr{1}&\lr{3}&\lr{4}& \lr{\cdots} & \lr{n-1}\\
\cline{1-1}\cline{2-2}\cline{3-3}\cline{4-4}\cline{5-5}
\lr{2}&\lr{n}\\
\cline{1-1}\cline{2-2}
\end{array}$}
}
& $2 \hsm 1 \hsm n \hsm 3 \hsm 4 \cdots $
& $s_3 s_4
    \cdots s_{n-2} s_{n-1} s_1$
&  $s_{n-1}s_1$ \\  \hline
{\def\lr#1{\multicolumn{1}{|@{\hspace{.6ex}}c@{\hspace{.6ex}}|}{\raisebox{-.3ex}{$#1$}}}
\raisebox{-.6ex}{$\begin{array}[b]{ccccc}
\cline{1-1}\cline{2-2}\cline{3-3}\cline{4-4}\cline{5-5}
\lr{1}&\lr{2}&\lr{4}& \lr{\cdots} & \lr{n-1}\\
\cline{1-1}\cline{2-2}\cline{3-3}\cline{4-4}\cline{5-5}
\lr{3}&\lr{n}\\
\cline{1-1}\cline{2-2}
\end{array}$}
}
& $3 \hsm 1 \hsm n \hsm 2 \hsm 4 \cdots$
& $s_3 s_4 \cdots s_{n-2}
    s_{n-1} s_1 s_2$ 
&  $s_{n-1}s_2$\\ \hline
{\def\lr#1{\multicolumn{1}{|@{\hspace{.6ex}}c@{\hspace{.6ex}}|}{\raisebox{-.3ex}{$#1$}}}
\raisebox{-.6ex}{$\begin{array}[b]{ccccc}
\cline{1-1}\cline{2-2}\cline{3-3}\cline{4-4}\cline{5-5}
\lr{1}&\lr{2}&\lr{3}& \lr{\cdots} & \lr{n-1}\\
\cline{1-1}\cline{2-2}\cline{3-3}\cline{4-4}\cline{5-5}
\lr{4}&\lr{n}\\
\cline{1-1}\cline{2-2}
\end{array}$}
}
& $4 \hsm 1 \hsm n \hsm 2 \hsm 3 \cdots $
& $s_3 s_4 \cdots s_{n-2} s_{n-1} s_1 s_2
    s_3$
 & $s_{n-1}s_3$\\  \hline
\vdots &  \vdots & \vdots & \vdots \\ \hline
{\def\lr#1{\multicolumn{1}{|@{\hspace{.6ex}}c@{\hspace{.6ex}}|}{\raisebox{-.3ex}{$#1$}}}
\raisebox{-.6ex}{$\begin{array}[b]{ccccc}
\cline{1-1}\cline{2-2}\cline{3-3}\cline{4-4}\cline{5-5}
\lr{1}&\lr{2}&\lr{\cdots}& \lr{n-3} & \lr{n-2}\\
\cline{1-1}\cline{2-2}\cline{3-3}\cline{4-4}\cline{5-5}
\lr{n-1}&\lr{n}\\
\cline{1-1}\cline{2-2}
\end{array}$}
}
& $n-1 \hsm 1 \hsm n \hsm 2 \hsm 3 \cdots$
& $s_3 s_4 \cdots s_{n-2} s_{n-1}
    s_1 s_2 \cdots s_{n-3} s_{n-2}$
&  $s_{n-1}s_{n-2}$ \\ \hline
\end{tabular}
\end{table}

\end{lemma}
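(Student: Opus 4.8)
The plan is to prove all three bullet points by working purely on the combinatorial side, via the identification of Springer fixed points with row-strict fillings. First, since the Hessenberg function is the identity, permissible fillings of $(n-2,2)$ are exactly the row-strict ones, so each row increases strictly from left to right. If $n$ lies in the two-box bottom row it must occupy the rightmost box, so the bottom row is $(a,n)$ for a unique $a\in\{1,\dots,n-1\}$, and the top row is then forced to be the increasing arrangement of $\{1,\dots,n\}\setminus\{a,n\}$. Hence there are exactly $n-1$ such fillings, one per value of $a$, and these are precisely those in the ``pf'' column. Write $T_a$ for the filling indexed by $a$, and $b_1<b_2<\cdots<b_{n-2}$ for its top-row entries; note $b_{n-2}=n-1$ in every case. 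Because $\sigma=\phi_\lambda(T_{RE})$, Remark~\ref{remark:english and rotated english} identifies $\phi_{\lambda,\sigma}$ with the rotated English reading, so the fixed point $w$ attached to $T_a$ satisfies $w^{-1}=\phi_{\lambda,\sigma}(T_a)$, which one reads straight off the diagram: $w^{-1}=(a,\,1,\,n,\,2,3,\dots,a{-}1,\,a{+}1,\dots,n{-}1)$ when $a\geq 2$, and $w^{-1}=(1,\,2,\,n,\,3,4,\dots,n{-}1)$ when $a=1$, matching the ``$w^{-1}$'' column. Inverting this permutation and using that right multiplication by $s_i$ transposes the entries in positions $i,i+1$ of one-line notation, a short computation gives $w=s_3s_4\cdots s_{n-1}\,s_1s_2\cdots s_{a-1}$ (with empty second block for $a=1$); counting inversions of this one-line notation yields Bruhat length $n+a-4$, equal to the number of simple reflections displayed, so the displayed word is reduced.

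The heart of the argument is the dimension-pair computation for $T_a$. I would enumerate every dimension pair $(c,d)$ organized by the location of the top part $d$. The key structural observation — and this is where the hypothesis $n\geq 6$ is used, since then the top row has $n-2\geq 4$ boxes — is that condition (3) of the definition of a dimension pair is very restrictive: any top-row box that is not in the last column has a right neighbour equal to some $b_{j+1}<n$, which immediately rules out $d=n$ for every such $c$; and an analogous count rules out all remaining candidates except $d=a$ (realized by the single pair $(1,2)$ if $a=2$, and by $(a{-}1,a)$ if $a\geq 3$) and $d=n$ (realized by the single pair $(b_{n-2},n)=(n{-}1,n)$, valid because $n$ sits in column $2$, strictly to the left of column $n-2$). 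One also checks directly that the only relevant same-column pairs are those in columns $1$ and $2$, and that the column-$2$ one is killed by condition (3). Hence $DP^{T_a}$ has exactly one pair with top part $n$ and, when $a\geq 2$, exactly one pair with top part $a$, so in the list of top parts $\mathbf{x}$ we get $x_n=1$, $x_a=1$ (only $x_n=1$ if $a=1$), and all other $x_\ell=0$. Substituting into~\eqref{eq:permutation from top parts} gives $\omega(\mathbf{x})=u_au_n=s_{a-1}s_{n-1}$ (respectively $s_{n-1}$), so $\roll(w)=\omega(\mathbf{x})^{-1}=s_{n-1}s_{a-1}$ (respectively $s_{n-1}$), which is the ``$v_{hf}$'' column.

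Finally, the listed fixed points satisfy $w_{a+1}=w_as_a$, and from the one-line notation above $w_a$ carries $1$ and $a+2$ in positions $a$ and $a+1$ respectively (and $1,2$ in positions $1,2$ when $a=1$), hence in increasing order; therefore $\ell(w_as_a)=\ell(w_a)+1$ and $w_a<w_{a+1}$ in Bruhat order, which gives the chain~\eqref{eq:total order}. The step I expect to be the main obstacle is the dimension-pair enumeration: it is the one place requiring genuine case analysis, and one must carefully track condition (3) through every configuration. It is nonetheless a finite, elementary verification, and the $n\geq 6$ hypothesis is exactly what makes the bookkeeping uniform — the small cases $n=4,5$ having already been dealt with by hand in Section~\ref{sec:examples}.
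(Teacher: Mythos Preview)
Your approach is essentially the same as the paper's: enumerate the row-strict fillings with $n$ in the bottom row, read off $w^{-1}$ via the rotated English reading, invert to get $w$, compute the dimension pairs directly, and deduce the Bruhat chain from the reduced-word structure. Your dimension-pair enumeration is in fact more careful than the paper's terse ``the definition of dimension pairs implies\ldots'' (and your pair $(a{-}1,a)$ for $a\geq 3$ is correct, whereas the paper's $(1,k)$ actually fails condition~(3) in those cases).

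One small slip to fix: your parenthetical ``note $b_{n-2}=n-1$ in every case'' is false precisely when $a=n-1$, since then the top row is $1,2,\ldots,n-2$ and $b_{n-2}=n-2$. In that case the unique dimension pair with top part $n$ is $(n-2,n)$, not $(n-1,n)$. This does not affect anything downstream, because your actual argument identifies the pair as $(b_{n-2},n)$ and only the count $x_n=1$ enters the formula for $\roll(w)$; just delete the incorrect substitution and the proof stands.
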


\begin{proof}[Proof of Lemma~\ref{lemma:rolldown table for fixed points}]
Since the Springer permissible fillings are precisely those which are
row-strict, it is immediate that the permissible fillings listed in
the table are precisely those with $n$ in the bottom row. In
particular there are exactly $n-1$ such permissible fillings as
claimed. Moreover, it follows
from the
definition of $\phi_{\lambda,\sigma}$ (which corresponds to the
rotated English reading) 
that the one-line notation of the $w^{-1}$ are those given in the
table.
Explicit computation also verifies that the following expressions
in the simple transpositions are indeed reduced word decompositions of
the $w^{-1}$: 
\begin{itemize}
\item $1 \hsm 2 \hsm n \hsm 3 \hsm 4 \cdots = s_{n-1} s_{n-2} \cdots
  s_4 s_3$
\item $2 \hsm 1 \hsm n \hsm 3 \hsm 4 \cdots = s_1 s_{n-1} s_{n-2} \cdots
s_4 s_3$  
\item  $3 \hsm 1 \hsm n \hsm 2 \hsm 4 \cdots = s_2 s_1 s_{n-1} s_{n-2}
\cdots s_4 s_3 $   
\item $4 \hsm 1 \hsm n \hsm 2 \hsm 3 \cdots = s_3 s_2 s_1 s_{n-1} s_{n-2}
\cdots s_4 s_3$  
\item $\vdots$
\item  $n-1 \hsm 1 \hsm n \hsm 2 \hsm 3 \cdots = s_{n-2} s_{n-1} \cdots s_2
s_1 s_{n-1} s_{n-2} \cdots s_4 s_3 $, 
\end{itemize}
from which it follows that the $w$ are those given in the
list. For $k$ with $1 \leq k \leq n-1$, the definition of dimension pairs implies that the permissible
filling with $k$ and $n$ in the bottom row contains as dimension pairs
$\{(1,k), (n-1,n)\}$ for $2 \leq k \leq n-1$ and $\{(n-1,n)\}$ for
$k=1$. From this it follows from the definition of $\omega(\mathbf{x})$
that $\roll(w)$ is as given in
the table. Finally, from the given reduced word decompositions and the
definition of Bruhat order we obtain~\eqref{eq:total order} as
desired. 
\end{proof}

Before proceeding with the proof of Theorem~\ref{theorem:pinball
  basis} we briefly
recall the 
\textbf{Billey formula} for computing restrictions $\sigma_v(w)$ of 
Schubert classes $\sigma_v$ at some $w$ in $S_n$. 
We use the formulation given in \cite{Knu03}. 
Let $\alpha_i$ denote the simple root $t_i - t_{i+1}$
and $\widehat{\alpha}_i$ the operator on $H^*_T(\pt)$ which multiplies
by $\alpha_i$. 

\begin{theorem}\label{theorem:sara billey} \textbf{(\cite[Theorem 4]{Bil99}, also
  cf. \cite{Knu03})}
Suppose $I$ is a reduced word
expression for \(w \in S_n.\) For each \(v \in S_n\) we have 
\begin{equation}\label{eq:billey}
\sigma_v(w) = \sum_{J \subseteq I} \prod_{i \in I} 
  \left(\widehat{\alpha_i}^{[i \in J]} r_i \right) \cdot 1 
\end{equation}
where the sum is over reduced subwords $J$ of $I$ with product $v$, 
the notation $\widehat{\alpha}_i^{[i \in J]}$ means that
$\widehat{\alpha}_i$ is included only if
$i \in J$, and $r_i$ is the reflection corresponding to $s_i$.
\end{theorem}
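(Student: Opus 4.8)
The plan is to prove \eqref{eq:billey} by induction on the Bruhat length $\ell(w)$, showing that both sides obey one common recursion; this is in essence the argument of \cite{Bil99} (see also \cite{Knu03}), which I sketch since no new ideas are needed. Fix a reduced word $I = (i_1, i_2, \ldots, i_\ell)$ for $w$, so $w = s_{i_1} s_{i_2}\cdots s_{i_\ell}$ with $\ell = \ell(w)$, and set $w' := s_{i_2}\cdots s_{i_\ell}$, which has the reduced word $I' = (i_2,\ldots,i_\ell)$, length $\ell(w') = \ell(w) - 1$, and satisfies $w = s_{i_1} w'$. Write $b_v(w)$ for the right-hand side of \eqref{eq:billey}. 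Unwinding the composition of the operators $\widehat{\alpha}_{i_k}^{[k \in J]} r_{i_k}$ applied to $1$ — each $\widehat{\alpha}_{i_k}$ with $k \in J$ gets carried through the reflections $r_{i_1},\ldots,r_{i_{k-1}}$ that precede it in the composition — one obtains the root-product form
\[
b_v(w) \;=\; \sum_{\substack{J \subseteq I \textup{ reduced}\\ \textup{with product }v}}\ \ \prod_{k \in J} s_{i_1} s_{i_2}\cdots s_{i_{k-1}}(\alpha_{i_k}),
\]
so $b_v(w)$ is the sum, over reduced subwords $J$ of $I$ whose product is $v$, of the products of the corresponding ``associated roots.''

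Next I would extract the recursion satisfied by $b_v$. Splitting the subwords $J$ according to whether $1 \in J$ yields
\[
b_v(s_{i_1} w') \;=\; \begin{cases} r_{i_1}\bigl(b_v(w')\bigr), & \ell(s_{i_1}v) > \ell(v),\\[2pt] r_{i_1}\bigl(b_v(w')\bigr) + \alpha_{i_1}\, r_{i_1}\bigl(b_{s_{i_1}v}(w')\bigr), & \ell(s_{i_1}v) < \ell(v). \end{cases}
\]
Indeed, if $1 \notin J$ then $J \subseteq I'$ and each associated root of $J$ computed in $I$ is $r_{i_1}$ applied to the one computed in $I'$, so these subwords contribute $r_{i_1}(b_v(w'))$; if $1 \in J$ then $J = \{1\} \sqcup J'$ with $J' \subseteq I'$ a reduced subword of product $s_{i_1} v$ — which forces $\ell(s_{i_1}v) < \ell(v)$ — and the associated-root product picks up the leading factor $\alpha_{i_1}$ with $r_{i_1}$ applied to the remainder, contributing $\alpha_{i_1}\, r_{i_1}(b_{s_{i_1}v}(w'))$. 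The crucial input is that the restrictions $\sigma_v(w)$ of the equivariant Schubert classes satisfy this exact same recursion. This is the standard ``peel off a left descent'' rule for Schubert restrictions: it follows from the action of the divided-difference (Demazure) operators on the Schubert basis of $H^*_T(\Flags(\C^n))$ — equivalently from the nil-Hecke calculus — and can also be checked directly from the Borel/GKM presentation of $H^*_T(\Flags(\C^n))$, given the support property \eqref{eq:schuberts upper triangular} together with the value $\sigma_v(v) = \prod_{\beta > 0,\ v^{-1}\beta < 0} \beta$.

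Finally, the base case $\ell(w) = 0$ is immediate: then $w = e$, $I$ is the empty word, and $b_v(e)$ equals $1$ if $v = e$ and $0$ otherwise, matching $\sigma_v(e)$. Since $b_v(\cdot)$ and $\sigma_v(\cdot)$ obey the same recursion in $\ell(w)$ and coincide when $\ell(w) = 0$, induction gives $b_v(w) = \sigma_v(w)$ for all $v, w \in S_n$, which is precisely \eqref{eq:billey}; as a byproduct the sum defining $b_v(w)$ is seen to be independent of the chosen reduced word $I$, as it must be. The one step that genuinely requires care — and the only place I would invoke \cite{Bil99, Knu03} rather than reprove from scratch — is the Schubert-restriction recursion: one must pin down that peeling off the left descent $s_{i_1}$ introduces exactly the ring automorphism $r_{i_1}$ and, in the descent case for $v$, exactly the coefficient $\alpha_{i_1}$, with correct sign conventions on the roots. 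Everything else is the elementary subword bookkeeping above.
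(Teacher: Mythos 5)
The paper offers no proof of this statement at all---it is imported as a black box from Billey's paper (in Knutson's operator formulation) and used only as a computational tool in the proof of Theorem~\ref{theorem:pinball basis}---so there is no internal argument to compare yours against; what you give is a sketch of the standard proof, and it is correct in outline. Your unwinding of the operator product into the associated-root form $\prod_{k \in J} s_{i_1}\cdots s_{i_{k-1}}(\alpha_{i_k})$ is right, as is the subword bookkeeping behind the recursion for $b_v$: splitting on whether position $1$ lies in $J$, and observing that a reduced subword containing position $1$ with product $v$ forces $\ell(s_{i_1}v) < \ell(v)$. The one substantive ingredient, as you yourself flag, is the matching left-descent recursion for the restrictions $\sigma_v(w)$, with the reflection $r_{i_1}$ acting as a ring automorphism and the extra term $\alpha_{i_1}\, r_{i_1}(\sigma_{s_{i_1}v}(w'))$ appearing exactly in the descent case; this is where all the geometric content lives, and your proposal cites it rather than proves it. That is defensible, since the recursion is a prior standard fact (Kostant--Kumar nil-Hecke calculus, or the divided-difference description of the Schubert basis), though leaning on \cite{Bil99} for it is close to circular in spirit given that the target is Billey's own theorem; a cleaner self-contained variant is to check that the subword sum $b_v$ satisfies the properties that uniquely characterize the restricted Schubert classes (homogeneity of degree $\ell(v)$, the support property~\eqref{eq:schuberts upper triangular}, the diagonal value $\prod_{\beta>0,\ v^{-1}\beta<0}\beta$, and the GKM divisibility conditions). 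Your overall structure---common recursion plus trivial base case, with independence of the reduced word falling out as a byproduct---is the standard argument and has no gaps beyond the explicitly acknowledged citation.
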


We record the following fact, used in the proof below, which follows
straightforwardly from the
Billey formula. 

\begin{fact}\label{fact:extra}
  Suppose $v, w \in S_n$ with $v \leq w$ in Bruhat order. Suppose
  there exists a decomposition $w = w' \cdot w''$ for $w', w'' \in
  S_n$ where $v \leq w'$ and, for all simple transpositions $s_i$ such
  that $s_i < v$, we have $s_i \not \leq w''$. Then $\sigma_v(w) =
  \sigma_v(w')$. 
\end{fact}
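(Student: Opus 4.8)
The plan is to apply the Billey formula (Theorem~\ref{theorem:sara billey}) to a judiciously chosen reduced word for $w$. I read the decomposition $w = w'\cdot w''$ in the statement as a \emph{length-additive} factorization, $\ell(w) = \ell(w') + \ell(w'')$, which is how it arises in all our applications (for instance from the reduced factorizations recorded in Lemma~\ref{lemma:rolldown table for fixed points}). First I would fix reduced word expressions $I'$ for $w'$ and $I''$ for $w''$ and set $I := I'I''$, which is then a reduced word for $w$. Billey's formula expresses $\sigma_v(w)$ as the sum over reduced subwords $J \subseteq I$ with product $v$ of the quantities $\prod_{i \in I}\bigl(\widehat{\alpha_i}^{[i \in J]} r_i\bigr)\cdot 1$. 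The crucial point is the claim that every such $J$ lies entirely inside $I'$. Granting this, the factors of $\prod_{i \in I}(\cdots)$ indexed by positions in $I''$ contribute only the plain reflections $r_i$ with no $\widehat{\alpha_i}$; since those positions come last in $I$ and reflections fix the constant $1$, the Billey term of $J$ computed in $I$ coincides with the Billey term of $J$ computed in $I'$. As every reduced subword of $I'$ with product $v$ is also a reduced subword of $I$ with product $v$, the Billey sum for $\sigma_v(w)$ (using $I$) is term-by-term equal to the Billey sum for $\sigma_v(w')$ (using $I'$), which gives $\sigma_v(w) = \sigma_v(w')$.

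It remains to prove the claim. Given a reduced subword $J\subseteq I$ with product $v$, write $J' = J\cap I'$ and $J'' = J\cap I''$ with products $v'$ and $v''$, so $v = v'v''$. Since $J$ is a reduced word for $v$, we have $\ell(v) = |J'| + |J''| \ge \ell(v') + \ell(v'') \ge \ell(v'v'') = \ell(v)$, forcing equality throughout; hence $J'$ and $J''$ are reduced words for $v'$ and $v''$, and the product $v = v'v''$ is length-additive. In particular $v''$ is the tail subword of the reduced word $J$ for $v$, so $v'' \le v$, and $v''$ is the product of a subword of the reduced word $I''$ for $w''$, so $v'' \le w''$. Now suppose toward a contradiction that $J'' \ne \emptyset$; since $J''$ is a \emph{reduced} word, this forces $\ell(v'') \ge 1$, so the first letter $s$ of $J''$ is a simple transposition with $s \le v'' \le v$ and $s \le v'' \le w''$. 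Provided $s \ne v$, the hypothesis on simple transpositions strictly below $v$ then yields $s \not\le w''$, a contradiction, so $J'' = \emptyset$ and the claim follows.

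I expect the only genuine subtlety to be the degenerate case $s = v$, which can occur precisely when $v$ is itself a simple transposition (then $v' = e$ and $v'' = v = s$); here the hypothesis on $\{s_i : s_i < v\}$ is vacuous. I would dispose of this case using the additional observation --- automatic in every instance where we invoke this Fact, and harmless to record as part of the hypothesis in the strengthened form ``$s_i \le v \Rightarrow s_i \not\le w''$'' --- that $v \not\le w''$, which immediately contradicts $s = v \le w''$. The Billey-formula manipulations themselves are routine; the content of the argument is really the bookkeeping of which letters of $v$ can appear in a reduced word for $w''$.
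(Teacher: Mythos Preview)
Your argument is correct and is precisely the approach the paper intends: the paper gives no proof beyond the sentence ``follows straightforwardly from the Billey formula,'' and your reduction to the claim that every reduced subword $J$ for $v$ lies in $I'$ is the natural way to cash that out. You also go beyond the paper in two useful ways. First, you make explicit the length-additivity of $w = w'w''$, which the statement leaves tacit but which holds in every invocation (the factorizations come from the reduced-word table in Lemma~\ref{lemma:rolldown table for fixed points}). Second, you correctly flag that the hypothesis ``$s_i < v$'' should be ``$s_i \le v$'': with the strict inequality the statement is actually false (e.g.\ in $S_3$ take $v = s_1$, $w' = s_1$, $w'' = s_2 s_1$; the hypothesis is vacuous but $\sigma_{s_1}(s_1 s_2 s_1) = t_1 - t_3 \ne t_1 - t_2 = \sigma_{s_1}(s_1)$). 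As you note, the weak-inequality version holds in every application in the proof of Theorem~\ref{theorem:pinball basis}, so nothing downstream is affected.
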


As explained in Section~\ref{sec:betti}, we need to compute
the restrictions $p_{\roll(w)}(u)$ for $w, u$ Springer fixed
points. Since $p_{\roll(w)}$ is by definition the image of 
the equivariant Schubert class $\sigma_{\roll(w)}$ under the ring
map~\eqref{eq:projection} and because the diagram 
\[
\xymatrix{
H^*_{T}(\Flags(\C^n)) \ar @{^{(}->}[r] \ar[d] & H^*_{T}((\Flags(\C^n))^{T}) \cong
\bigoplus_{w \in W} H^*_{T}(\pt) \ar[d] \\
H^*_{S^1}(\mathcal{S}_{(n-2,2)}) \ar @{^{(}->}[r] &
H^*_{S^1}(\mathcal{S}_{(n-2,2)})^{S^1}) \cong \bigoplus_{w \in \Hess(h)^{S^1}}
H^*_{S^1}(\pt) 
}
\]
commutes, the polynomial $p_{\roll(w)}(u) \in H^*_{S^1}(\pt) \cong
\C[t]$ can be computed by first evaluating $\sigma_{\roll(w)}(u)$ by
the Billey formula~\eqref{eq:billey} and then using the linear projection $\t^*
\to \Lie(S^1)^*$ for our choice of $S^1$ in~\eqref{eq:conjugated
  circle} given in Lemma~\ref{lemma:conjugated circle}. We use this
technique repeatedly in the proof below.

\begin{proof}[Proof of Theorem~\ref{theorem:pinball basis} ]

By Propositions~\ref{prop:linear independence} and~\ref{prop: matrix
  form}, it suffices to prove that the matrix obtained from the
restrictions to fixed points 
\[
(p_{\roll(w)}(u))_{w, u \in \mathcal{S}_{(n-2,2)}^{S^1}}
\]
has $H^*_{S^1}(\pt)$-linearly independent columns. 

Let $n \geq 4$. We have seen in Section~\ref{sec:examples} that the
above assertion holds for the cases $n=4$ and $n=5$.
Hence assume now that $n \geq 6$. We assume by induction that for the 
$n-1$ case, i.e. for the case of the partition $(n-3,2)$, 
the above matrix has linearly independent columns.

For concreteness and for the remainder of the argument, we assume that
the fixed points $w \in \mathcal{S}_{(n-2,2)}^{S^1}$ have been
linearly ordered so that the fixed points corresponding to permissible
fillings containing the $n$ in the top row appear first, and that the
fixed points associated to fillings with $n$ in the bottom row are
given the ordering in the table in Lemma~\ref{lemma:rolldown
  table for fixed points} (reading from top to bottom). Ordered in
this manner, we may write the above matrix in terms of submatrices as
follows:
\begin{equation}\label{eq:ABCD}
(p_{\roll(w)}(u))_{w, u \in \mathcal{S}_{(n-2,2)}^{S^1}}
= \begin{bmatrix} A & B \\ C & D \end{bmatrix}
\end{equation}
where the submatrix $A$ has entries 
$p_{\roll(w)}(u)$ where both $w, u$ correspond to fillings with $n$ in
the top row, $D$ corresponds to those where both $w,u$ have $n$ in the
bottom row, and so on.

Consider the submatrix $A$. 
For an entry $p_{\roll(w)}(u)$ in $A$, by assumption $w$ is in the
subgroup $S_{n-1} \subseteq S_n$ and it is straightforward to see from
the definition of the dimension pair algorithm that $\roll(w)$ is
equal to the rolldown of $w$ considered as an element of
$\mathcal{S}^{S^1}_{(n-3,2)}$. Since $u \in S_{n-1}$ also this
submatrix is equal to the matrix of restrictions to fixed points
obtained in the $(n-3,2)$ case and so by induction $A$ has
linearly independent columns. 

Next consider the submatrix $B$ corresponding to $p_{\roll(w)}(u)$
where $\phi_{\lambda,\sigma}^{-1}(w^{-1})$ has $n$ 
in the bottom row and $\phi_{\lambda,\sigma}^{-1}(u^{-1})$ has $n$ in the top row. From
Lemma~\ref{lemma:rolldown table for fixed points} and the table given
there, 
we know that the rolldown $\roll(w)$ of any such
$w$ contains the simple transposition $s_{n-1}$ in its reduced word
decomposition. On the other hand, for $u$ with $n$ in the top row,
$u$ is an element in the subgroup $S_{n-1}$ which fixes the element
$n$, and in particular a reduced word decomposition for $u$ may be
written solely with the simple transpositions $s_1, s_2, \ldots,
s_{n-2}$. Hence $\roll(w) \not \leq u$ in
Bruhat order, and by the upper-triangularity
property~\eqref{eq:schuberts upper triangular} of equivariant
Schubert classes this implies
$p_{\roll(w)}(u)=0$. We 
conclude that the entire submatrix is $0$ and 
the matrix~\eqref{eq:ABCD} is in fact of the form
\[
\begin{bmatrix} 
A & 0 \\
C & D 
\end{bmatrix} 
\]
where $A$ has linearly independent columns. In order to prove that the
full matrix has linearly independent columns, we wish to prove
that the submatrix $D$ has linearly independent columns. The remainder
of the proof is dedicated to the justification of this last claim, for
which we explicitly compute the appropriate entries $p_{\roll(w)}(u)$
using the Billey formula~\eqref{eq:billey}.

We compute each column of $D$ in the linear order 
given by the enumeration in Lemma~\ref{lemma:rolldown table for fixed
  points} of those $w$ with $n$ in the bottom row. For the
Billey computations below we use the choices of reduced word
decompositions for $w$ and $\roll(w)$ 
given in the same lemma.

First consider the case \(w = s_3 s_4 \cdots s_{n-2} s_{n-1}.\) Then \(\roll(w) = s_{n-1}.\) 
We claim $\sigma_{s_{n-1}}$ evaluates to $t_3 - t_n$ at all
fixed points $u$. Indeed, recalling that the reflection $r_i$ acts on the
variables $t_j$ by $r_i(t_i) = t_{i+1}, r_i(t_{i+1}) = t_i$ and
$r_i(t_j) = t_j$ for all $j \neq i, i+1$, we have for instance 
\begin{equation}\label{eq:first w}
\begin{split}
\sigma_{s_{n-1}}(s_3 s_4 \cdots s_{n-2} s_{n-1}) & = r_3 r_4 \cdots
r_{n-2} (t_{n-1} - t_n) \\
 & = r_3 r_4 \cdots r_{n-3} (t_{n-2} - t_n) \\
 &  \vdots \\ 
 & = t_3 -  t_{n},
\end{split}
\end{equation}
which proves the claim for $u = w$. For all other 
$u$ with $n$ in the bottom row, the computation of the Billey
formula differs from~\eqref{eq:first w} only in that there
are extra simple transpositions occurring after the $s_{n-1}$ in the
reduced word decomposition of $u$. By Fact~\ref{fact:extra}
these extra transpositions make
no difference in the evaluation of $\sigma_{s_{n-1}}(u)$ and so 
$\sigma_{s_{n-1}}(u) = t_3 - t_n$ for all $u$.
The restriction $p_{\roll(w)}(u) = p_{s_{n-1}}(u)$ is equal to the
image of $\sigma_{s_{n-1}}(u) \in H^*_{T}(\pt)$ under the projection
map $H^*_{T}(\pt) \to H^*_{S^1}(\pt)$ induced from the inclusion
$S^1 \into T$. By Lemma~\ref{lemma:conjugated circle} we know $t_3 \mapsto t$ and
$t_n \mapsto (n+3-n)t = 3t$ under this projection, 
from which we conclude that the first (leftmost) column of $D$ is 
\[
\begin{bmatrix} (t -3t) = -2t \\ -2t \\ \vdots \\ -2t \end{bmatrix}.
\]

Next consider the case \(w = s_3 s_4 \cdots s_{n-2} s_{n-1} s_1\) and \(\roll(w) = s_{n-1}s_1.\) 
In this case, \(\sigma_{s_{n-1}s_1}(s_3 s_4 \cdots s_{n-2} s_{n-1}) =
0\) since $s_{n-1}s_1$ does not occur as a subword of $s_3 s_4 \cdots
s_{n-2} s_{n-1}$. Also, 
$\sigma_{s_{n-1}s_1}$ evaluates to $(t_3 - t_{n})(t_1 - t_2)$
  at all other $u$. This can be seen from the computation
\begin{equation}\label{eq:next w}
\begin{split}
\sigma_{s_{n-1}s_1}(s_3 s_4 \cdots s_{n-2} s_{n-1} s_1) & = (r_3 r_4
\cdots r_{n-2}(t_{n-1}-t_n))(r_3 r_4 \cdots r_{n-2} r_{n-1}(t_1 -
t_2)) \\
 & = (r_3 r_4
\cdots r_{n-2}(t_{n-1}-t_n))(t_1 - t_2) \\
& = (t_3 -t_{n})(t_1 - t_2)
\end{split}
\end{equation}
for the case $w = s_3 s_4 \cdots s_{n-2} s_{n-1} s_1$. The computation
at other $u$
follows from~\eqref{eq:next w} and Fact~\ref{fact:extra}. Applying
Lemma~\ref{lemma:conjugated circle} 
again we obtain that the column corresponding to this $w$ is 
\[
\begin{bmatrix} 0 \\ 2(n-2)t^2 \\ 2(n-2)t^2 \\ \vdots \\ 2(n-2)t^2 \end{bmatrix}.
\]

Next consider the case \(w = s_3 s_4 \cdots s_{n-2} s_{n-1} s_1 s_2\) and \(\roll(w) =
  s_{n-1} s_2.\) 
In this case
\[
\sigma_{s_{n-1}s_2}(s_3 s_4 \cdots s_{n-2} s_{n-1}) =
  \sigma_{s_{n-1}s_2}(s_3 s_4 \cdots s_{n-2} s_{n-1} s_1) = 0
\]
since
  there are no reduced subwords in $s_3 s_4 \cdots s_{n-2} s_{n-1}$
  equal to $\roll(w) = s_{n-1}s_2$. 
Furthermore, $\sigma_{s_{n-1}s_2}$ evaluates to $(t_1-t_4)(t_3 - t_{n})$ on
  all other $u$. Since the computations are similar to
  those given above we henceforth keep explanation brief. We have 
\[
\sigma_{s_{n-1}s_2}(s_3 s_4 \cdots s_{n-1}s_1 s_2) = (t_3 -
t_{n})(t_1 - t_4)
\]
and at other $u$ the computation is similar. 
Hence the column corresponding to this $w$ is 
\[
\begin{bmatrix} 0 \\ 0 \\ 2(n-3)t^2 \\ \vdots \\ 2(n-3)t^2 \end{bmatrix}.
\]

Next consider the case \(w = s_3 s_4 \cdots s_{n-2} s_{n-1} s_1 s_2
  s_3\) where $\roll(w)$ is $s_{n-1}s_3$.
In this case, by arguments similar to those above, 
$\sigma_{s_{n-1}s_3}$ evaluates to $(t_3 - t_4)(t_3 - t_{n})$ on
  the first $3$ fixed points listed in the table in
  Lemma~\ref{lemma:rolldown table for fixed points}. 
Moreover $\sigma_{s_{n-1}s_3}$ evaluates to $(t_3 - t_4)(t_3 - t_{n}) +
  (t_1 - t_5)(t_3 - t_{n})$ at all other $u$. 
We conclude the column corresponding to this $w$ is 
\[
\begin{bmatrix} 2n t^2  \\ 2nt^2 \\ 2n t^2 \\ 4(n-1)t^2 \\ \vdots \\ 4(n-1)t^2 \end{bmatrix},
\]
where there are $(n-1)-3 =n-4$ entries of the form $4(n-1)t^2$. 

For the next case, suppose $n \geq 7$. (In the special case $n=6$, this case is
  vacuous.) Suppose $k \in \Z$ with $4 \leq k \leq n-3$. 
  Let \(w = s_3 s_4 \cdots s_{n-2} s_{n-1} s_1
  s_2 \cdots s_{k-1}s_k\) and \(\roll(w) = s_{n+1} s_k.\) 
By assumption on $k$, the simple transposition $s_k$ commutes with $s_{n-1}$. 
In this case $\sigma_{s_{n-1}s_k}$ evaluates to $(t_3 - t_{k+1})(t_3 -
  t_{n})$ on all fixed points listed in Lemma~\ref{lemma:rolldown
    table for fixed points} up to $s_3 s_4 \cdots s_{n-2} s_{n-1} s_1
  s_2 \cdots s_{k-1}$. There are $k$ fixed points in all of this
  form. 
Moreover, $\sigma_{s_{n-1}s_k}$ evaluates to $(t_3 - t_{k+1})(t_3 -
  t_{n}) + (t_1 - t_{k+2})(t_3 - t_{n})$ on the remaining fixed
  points $u$ which contain $s_3 s_4 \cdots s_{n-2} s_{n-1} s_1 s_2 \cdots
  s_{k-1}s_k$. 
Hence when projected to $H^*_{S^1}(\pt)$, the column corresponding to such a $w$ is 
\[
\begin{bmatrix} 2(n-k+3) t^2 \\ \vdots \\ 2(n-k+3)t^2 \\ ( 2(n-k+3) +
  2(n-k+1) ) t^2 \\ \vdots \\ ( 2 (n-k+3) + 2(n-k+1) ) t^2 \end{bmatrix}
\]
where there are $k$ entries of the form $2(n-k+3)t^2$ and $n-1-k$
entries of the form $( 2 (n-k+3) + 2(n-k+1) ) t^2$. 

Finally, consider the case $w = s_3 s_4 \cdots s_{n-2} s_{n-1} s_1
  s_2 \ldots s_{n-3} s_{n-2}$ and $\roll(w) = s_{n-1} s_{n-2}$. Since 
  $s_{n-1}$ and $s_{n-2}$ do not commute, this computation is somewhat
  different
  from the ones given above; 
  in particular $\roll(w)$ is \emph{not} Bruhat-less than any of the fixed
  points $u$ except for the last
  one listed in Lemma~\ref{lemma:rolldown table for fixed points}.
Hence in this case
$\sigma_{s_{n-1}s_{n-2}}(u) = 0$ at all $u$ except for $u=s_3
  s_4 \cdots s_{n-2} s_{n-1} s_1 s_2 \cdots s_{n-3} s_{n-2}$, and 
at this last $u$, 
we can compute 
\[
\sigma_{s_{n-1} s_{n}}( s_3 s_4 \cdots s_{n-2} s_{n-1} s_1 s_2 \cdots
s_{n-3} s_{n-2}) = (t_3 - t_{n})(t_1 - t_{n}).
\]
Hence the column corresponding to this last $w$ is 
\[
\begin{bmatrix} 0 \\ 0 \\ \vdots \\ 0 \\ 2t^2\end{bmatrix}.
\]

We now prove that the columns $p_w$ for $w$ as above are linearly
independent over the ring $H^*_{S^1}(\pt) \cong \C[t]$. The first
column $p_w$ with $w = s_3 s_4 \cdots s_{n-2} s_{n-1}$ has a $-2t$ in
each entry. We may add or subtract any
multiple of this column to or from any other column, and if the
resulting set of columns is 
linearly independent, then so is the original set of
columns. 
It is straightforward to check that for all $k$ with $3
\leq k \leq n-1$, subtracting $2(n-k+3)$ times the first column from
the column corresponding to $w$ with rolldown $\roll(w) = s_{n-1}s_k$
yields
\[
\begin{bmatrix}
  0 \\
\vdots \\
0 \\
2(n-k+1) t^2 \\
\vdots \\
2(n-k+1) t^2 
\end{bmatrix}
\]
where there are $k$ zeroes at the top of the column and $(n-1)-k$
entries at the bottom of the form $2(n-k+1)t^2$.
In particular, adjusted in this manner, the resulting matrix
is lower-triangular with non-zero entries along the diagonal, so 
its columns are linearly independent. As argued above, this implies
that the matrix $D$ has linearly independent columns, as was desired. 
This completes the proof. 
\end{proof}

\section{Open questions}\label{sec:open questions}

We close with some open questions for future work. 

\begin{question}
  The computations in the proof of Theorem~\ref{theorem:pinball basis} explicitly show that the set of
  classes $\{p_{\roll(w)}\}_{w \in \mathcal{S}_{(n-2,2)}^{S^1}}$ are
  not poset-upper-triangular for $n \geq 6$ since the submatrix $D$
  discussed in the proof has non-zero entries both above and below its
  main diagonal. However the proof also shows that a simple change of
  basis does yield a poset-upper-triangular basis. We do not know
  whether this is an instance of a more general phenomenon. 
It would be of interest to clarify the situation for other cases of
Springer varieties. 
\end{question}

\begin{question}
Both
Tymoczko's paving by affines of Hessenberg varieties and the
interpretation of our dimension pair algorithm via poset pinball
depend on using a Hessenberg variety $\Hess(N,h)$ for which the
nilpotent operator $N$ is in highest form. In the case of Tymoczko's
paving, this choice can be viewed as a matter only of convenience in
the sense that any other translated Hessenberg variety $\Hess(\sigma N
\sigma^{-1}, h)$ can be given a paving simply by using translated
Schubert cells $\sigma \cdot BwB$ instead of the usual Schubert cells
$BwB$. On the other hand, the poset pinball game delicately depends on the
choice of initial subset \[\Hess(N,h)^{S^1} \subseteq S_n.\]
 Although
the sets $\Hess(N,h)^{S^1}$ and $\Hess(\sigma N\sigma^{-1},h)^{S^1}$
are also related by a simple translation by $\sigma$, multiplication
by a permutation does \emph{not} preserve Bruhat order, so pinball
results do not immediately translate from $\Hess(N,h)$ to
$\Hess(\sigma N\sigma^{-1},h)$. One of the main results of this
manuscript is that, for a certain special family of Hessenberg
varieties $\Hess(N,h) = \mathcal{S}_{N_{hf}}$ (where $N_{hf}$ is a
particular choice of highest form) we can use the poset pinball and
the dimension pair algorithm to obtain a module basis for
$H^*_{S^1}(\mathcal{S}_{N_{hf}})$. 

\begin{enumerate}
\item It seems plausible that there may
be other choices of highest forms
(cf. Theorem~\ref{theorem:N_T and algorithm}), different from
that used in this manuscript, which are particularly well-suited for
poset pinball. 
\item Furthermore, among the choices of highest forms which
behave well for poset pinball, there may also be choices
best suited for further applications of pinball bases. More
specifically, 
there may be choices highest forms $N_T$
such that a pinball basis for 
$H^*_{S^1}(\mathcal{S}_{N_T})$ has good 
properties when mapped to $H^*_{S^1}(\mathcal{S}_{N_T}^{S^1})$. 
Such choices could then prove useful for 
e.g.
constructions of representations on equivariant cohomology (analogous
to the lifts of the classical Springer representations constructed via
pinball in \cite{HarTym10}). 
\end{enumerate}
\end{question}

\def\cprime{$'$}

\end{document}